\documentclass{amsart}

\usepackage{amsmath}
\usepackage{amssymb}
\usepackage{graphicx}
\usepackage{color}
\usepackage[all]{xy}
\usepackage{amsthm}

\usepackage{enumerate}
\usepackage{tikz}
\usetikzlibrary{shapes.geometric}
\usepackage{overpic}
\usepackage{bbding}
\usepackage{enumerate}

\usepackage[normalem]{ulem}

\usepackage[colorlinks,citecolor=blue,linkcolor=blue]{hyperref}
\usepackage[nameinlink]{cleveref}

\theoremstyle{plain}
\newtheorem{thm}{Theorem}[section]

\newtheorem{lem}[thm]{Lemma}
\crefname{lem}{Lemma}{Lemmas}

\newtheorem{prop}[thm]{Proposition}
\crefname{prop}{Proposition}{Propositions}

\newtheorem{claim}[thm]{Claim}
\crefname{claim}{Claim}{Claims}

\newtheorem{notation}[thm]{Notation}
\crefname{notation}{Notation}{Notations}

\newtheorem*{namedtheorem}{\theoremname}
\newcommand{\theoremname}{testing}
\newenvironment{named}[1]{\renewcommand{\theoremname}{#1}\begin{namedtheorem}}{\end{namedtheorem}}

\theoremstyle{definition}

\newtheorem*{dfn}{Definition}

\newtheorem{rem}[thm]{Remark}

\numberwithin{equation}{section}

\newcommand{\bdy}{\partial}

\newcommand{\from}{\colon} 

\renewcommand{\setminus}{\smallsetminus}

\newcommand{\C}{\mathbb{C}}
\renewcommand{\H}{\mathbb{H}}

\newcommand{\Q}{\mathbb{Q}}
\newcommand{\R}{\mathbb{R}}
\newcommand{\Z}{\mathbb{Z}}
\newcommand{\N}{\mathbb{N}}

\newcommand{\Fp}{\mathbb{F}_p}
\newcommand{\pp}{{\mathfrak{p}}}
\newcommand{\ok}{{\mathcal{O}_k}}
\newcommand{\okp}{{\mathcal{O}_{k_{\pp}}}}

\newcommand{\PSL}{\mathrm{PSL}}
\newcommand{\SL}{\mathrm{SL}}
\renewcommand{\ss}{\mathbf{s}}
\renewcommand{\tt}{\mathbf{t}}

\newcommand{\Hth}{\mathbb{H}^3}

\newcommand{\smod}[1]{{\!\!\pmod{#1}}}
\newcommand{\trace}{\operatorname{tr}}

\def\Fp{\mathbb{F}_p}

\def\pp{{\mathfrak{p}}}

\def\ok{{\mathcal{O}_{\rm{k}}}}
\def\okp{{\mathcal{O}_{{\rm{k}}_{\pp}}}}
\def\oKp{{\mathcal{O}_{{\rm{K}}_{\pp}}}}

\def\oK{{\mathcal{O}_{\rm{K}}}}

\def\traceSym{\tau}
\def\traceSymTwo{\tau}
\newcommand{\nclose}[1]{\ensuremath{\langle\!\langle#1\rangle\!\rangle}}

\newcommand{\PrimeSet}{\Omega}

\usepackage{accents}
\newlength{\dhatheight}

\definecolor{bettergreen}{rgb}{0,0.6,0.4}
\definecolor{mutedgreen}{rgb}{.1,.75,0.15}

\definecolor{purple}{rgb}{0.4,0,0.6}

\usepackage{caption}
\usepackage{subcaption}
\usepackage{subcaption}
\title{Conjugacy Distinguished Cosets in Hyperbolic $3$-Manifold Groups}

\author{David Futer}
 \address{Department of Mathematics, Temple University, Philadelphia, PA 19122}
\email{dfuter@temple.edu}

\author{Emily Hamilton}
 \address{Department of Mathematics,
California Polytechnic State University,
San Luis Obispo, CA 93407}
\email{mhamil09@calpoly.edu}

\author{Neil R. Hoffman}
 \address{Department of Mathematics and Statistics, University of Minnesota, Duluth, MN 55812}
\email{neilhoff@d.umn.edu}

\begin{document}

\begin{abstract}
A subset $S$ of a group $G$ is  \emph{conjugacy distinguished} if the union of all conjugates of $S$ is closed in the profinite topology on $G$.
We prove that if $M = \H^3/\Gamma$ is a hyperbolic $3$-manifold of finite volume, $g \in \Gamma$,  and $H$ is an abelian subgroup of $\Gamma$,
then the coset $gH$ is conjugacy distinguished in $\Gamma$.  A subset $S \subset G$ is \emph{conjugacy distinguished from a class of subgroups}  if, 
for every $K$ in the class  that  is disjoint from the union of conjugates of $S$, 
there exists a homomorphism $\varphi \from G \rightarrow F$, where $F$ is a finite group, such that $\varphi(K)$ is disjoint from the union of conjugates of $\varphi(S)$.
In previous work, we proved that if $M = \H^3/\Gamma$ is a hyperbolic $3$-manifold of finite volume,
then a coset of a maximal parabolic subgroup with cusp $C$ is conjugacy distinguished from  the class of maximal parabolic subgroups of $\Gamma$ with cusps distinct from $C$.
We extend this result by proving that a coset of a 
loxodromic subgroup is conjugacy distinguished from the class of maximal parabolic subgroups of $\Gamma$.
\end{abstract}

\maketitle

\section{Introduction}
\label{introduction}

In 1911, Max Dehn posed three foundational problems in combinatorial group theory: the word problem, the conjugacy problem,
and the isomorphism problem.  Each problem is undecidable in general.  However, algorithms have been
developed for many classes of groups.   These algorithms have been linked to the notions of subgroup separability and conjugacy separability.

\begin{dfn}
Let $G$ be a group. The \emph{profinite topology} on $G$ is the topology whose basic open sets are cosets of finite-index normal subgroups. 
Since every coset of a finite-index subgroup $H \lhd G$ is the complement of finitely many other cosets of $H$, the basic open sets are also closed.
\begin{enumerate}[\:\:$(1)$]
\item A subset $S \subset G$ is called \emph{separable} if it is closed in the profinite topology on $G$.
Equivalently, for every element $g \in G \setminus S$, there is a homomorphism $\varphi \from G \to F$, where $F$ is a finite group, 
such that $\varphi(g) \notin \varphi(S)$.
\item  A group $G$ is \emph{residually finite} if the trivial subgroup is separable.
\item A group $G$ is \emph{subgroup separable} if every finitely generated subgroup of $G$ is separable.
\item A group $G$ is \emph{conjugacy separable} if every conjugacy class in $G$ is separable. 
Equivalently, for every pair of elements $g, h \in G$ that are not conjugate in $G$, there is a homomorphism
$\varphi \from G \to F$, where $F$ is a finite group, such that $\varphi(g)$ and $\varphi(h)$ are not conjugate.
\end{enumerate}
\end{dfn}

Let $\Gamma$ be a finitely presented group.  If  $\Gamma$ is residually finite, then $\Gamma$ 
has a solvable word problem. In broad strokes, an algorithm to solve the problem uses two parallel processes: given a word $w$ in the generators of $\Gamma$, one process tries to express $w$ as a product of (conjugates of) the relators, while a second process tries to find a homomorphism from $\Gamma$ to a finite group where the image of $w$ is nontrivial.  If $w$ is trivial, the first process will always terminate. If $w$ is non-trivial, residual finiteness ensures the second process will terminate. Thus, for any word, 
one of the two processes must terminate, solving the problem.
 If $\Gamma$ is subgroup separable, then a similar two-process algorithm solves the subgroup membership problem for $\Gamma$.  Finally, if $\Gamma$ is conjugacy separable, then a similar algorithm solves the conjugacy problem for $\Gamma$.  {Under these schemes, there is a particular satisfaction in that each positive answer can be affirmed via a computation in the group and each negative answer (non-triviality, non-membership, a lack of conjugacy) comes in tandem with a homomorphism to a finite group, which serves as independently verifiable evidence of the negative answer.

Hall proved that free groups are subgroup separable \cite{Hall}; see Stallings~\cite{Stallings:FiniteGraphs} for a particularly beautiful proof. Scott proved that surface groups are subgroup separable, by embedding them into right-angled reflection groups \cite{Scott}. The same ideas extend to the fundamental groups of  compact Seifert fibered spaces. Agol \cite{Agol:VirtualHaken} and Wise \cite{Wise:QCH-Monograph} proved that the fundamental groups of hyperbolic $3$--manifolds also virtually embed into right-angled Coxeter groups, thereby establishing subgroup separability. On the other hand, there exist $3$--manifold groups that are not subgroup separable; see \cite{BKS} and \cite{LongNiblo} for examples.

All of the above classes of groups are also conjugacy separable. For free groups, this follows from the work of Stebe \cite{Stebe}. For surface groups and hyperbolic $3$--manifold groups, this follows from the combined work of Agol, Haglund, Kahn, Markovic, Minasyan, and Wise \cite{Agol:VirtualHaken, HaglundWise:Coxeter, KahnMarkovic:SurfaceSubgroups, Minasyan:Hereditary, Wise:QCH-Monograph}.  
Indeed, such a group embeds as a quasi-convex subgroup of a right-angled Artin group, and is furthermore a virtual retract of the right-angled Artin group. Since right-angled Artin groups are hereditarily conjugacy separable by a theorem of Minasyan, it follows  that virtual retracts of right-angled Artin groups are hereditarily conjugacy separable. See \cite[implication (H.8)]{AFW} for a more detailed explanation. Finally, Hamilton, Wilton, and Zalesskii extended these ideas to prove that all fundamental groups of compact, orientable $3$--manifolds are conjugacy separable \cite{HWZ:Separability}. 
  
We are primarily concerned with concepts related to conjugacy separability.

\begin{dfn} Let $G$ be a group.
\begin{enumerate}[\:\:$(1)$]
\item A subset $S$ of $G$ is said to be \emph{conjugacy distinguished} if $\cup_{g \in G}  \ gSg^{-1}$ is separable. Equivalently,
for every element $\gamma \in G$ that is not conjugate to an element of $S$, there is a homomorphism $\varphi \from G \to F$, where $F$
is a finite group, such that $\varphi(\gamma)$ is not conjugate to an element of $\varphi(S)$.
\item We say that $G$ is \emph{subgroup conjugacy distinguished} if every finitely generated subgroup of $G$ is
conjugacy distinguished. 
\end{enumerate}
\end{dfn}

Virtually free groups \cite{Dyer:SeparatingConjugates} and limit groups \cite{CZ3} are known to be subgroup conjugacy distinguished. Chagas and Zalesskii proved 
that this property also holds for $3$-manifold groups \cite{CZ2}.

\subsection{Main results}
In this paper, we look at cosets of abelian subgroups of hyperbolic $3$-manifold groups. In particular, we prove the following.

\begin{thm}
\label{Theorem:ConjugacyDistinguished}
Let $M = \H^3 / \Gamma$ be a hyperbolic $3$-manifold of finite volume, let $g \in \Gamma$, and
let $H$ be an abelian subgroup of $\Gamma$. Then the coset $gH$ is conjugacy distinguished.
\end{thm}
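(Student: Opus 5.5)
We argue by cases on the abelian subgroup $H$. Since $\Gamma$ is torsion-free and discrete, $H$ is trivial, infinite cyclic loxodromic (contained in a maximal cyclic loxodromic subgroup), or parabolic (contained in a maximal parabolic $\Z^2$ or $\Z$ subgroup). If $H$ is trivial, $gH=\{g\}$ is conjugacy distinguished because $\Gamma$ is conjugacy separable (Agol, Wise, Minasyan, Hamilton--Wilton--Zalesskii). In general, fix $\gamma\in\Gamma$ not conjugate into $gH$. We seek a finite quotient $\varphi\from\Gamma\to F$ in which $\varphi(\gamma)$ is not conjugate into $\varphi(g)\varphi(H)$.

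The basic reduction is the following. If $\gamma$ is conjugate into $gH$ then $\gamma\sim gh$ for some $h\in H$, so it suffices to control the set $\{gh : h\in H\}$ up to conjugacy. When $g\in H$, the coset is $H$ itself, and $H$ is finitely generated, so it is conjugacy distinguished by Chagas--Zalesskii \cite{CZ2}. Assume therefore $g\notin H$. Let $A$ be the maximal abelian subgroup containing $H$; it is malnormal-type, since $x A x^{-1}\cap A\neq 1$ forces $x\in A$.

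\emph{Case $g\notin A$.} For $h\in H$ large, $gh$ is loxodromic with translation length roughly $\ell(g)+\ell(h)$ (loxodromic $H$), or with complex length growing (parabolic $H$: $\trace(gh)$ is a nonconstant polynomial/affine function of the parabolic parameter). Hence only finitely many elements of $gH$ have the same complex length/trace as $\gamma$. I would then show that the set of $h\in H$ with $gh\sim\gamma$ is detected modulo a finite-index subgroup $H'=H\cap N$. For this, use conjugacy separability to separate $\gamma$ from each of the finitely many candidate elements $gh_1,\dots,gh_k$ with matching trace, and use a quotient to a finite group $\PSL(2,\ok/\pp)$ (reduction of the trace field's integer ring modulo a prime $\pp$) to handle the rest: choose $\pp$ so that $\trace(\gamma)\not\equiv\pm\trace(gh)$ for every $h$ outside a finite set of residue classes. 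Concretely, $\trace(gh^n)$ is an explicit function of $n$ ($a\lambda^n+b\lambda^{-n}$ in the loxodromic case, $a+bn$ in the parabolic case). Reducing mod $\pp$ makes this periodic in $n$, so only finitely many residue classes of $n$ mod the period can match $\trace(\gamma)$. For parabolic $H\cong\Z^2$ the same holds with a linear form in two variables. Combining the congruence quotient with the conjugacy-separating quotients (intersect kernels) gives $F$.

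\emph{Case $g\in A\setminus H$.} Then $gH\subset A$, and the problem becomes separating $\gamma$ from a union of cosets inside $A$. Conjugates of $A$ meet only in $1$, and elements of $A$ are conjugate in $\Gamma$ only if they are equal (by malnormality, in the torsion-free case). So it suffices to find a quotient in which $\varphi(\gamma)$ is not conjugate into $\varphi(gH)$. This follows from $A$ being conjugacy distinguished \cite{CZ2} when $\gamma\notin\bigcup A^x$. When $\gamma\in A$, it follows from separability of $gH$ in $A$ (via subgroup separability) together with the fact that the profinite closure of $A$ is still ``malnormal'' in $\widehat\Gamma$ (the efficiency results used in Hamilton--Wilton--Zalesskii).

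The main obstacle is the case $g\notin A$. There the trace argument only rules out non-conjugacy up to $\pm$ trace, and equal traces in $\PSL(2,\C)$ do not imply conjugacy in $\Gamma$. I would handle this in the profinite completion instead: show that $\overline{\bigcup_x x\,gH\,x^{-1}}=\bigcup_{x\in\widehat\Gamma}x\,g\overline{H}\,x^{-1}$, and that $\gamma$ conjugate into $g\overline{H}$ in $\widehat\Gamma$ forces conjugacy into $gH$ in $\Gamma$. This uses conjugacy separability of $\Gamma$, the trace finiteness above to reduce $\overline H$ to finitely many residue classes, and separability of double cosets $Z(\gamma)\,x\,H$ (double coset separability in virtually special groups) to upgrade profinite conjugators to genuine ones.
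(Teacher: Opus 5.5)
There is a genuine gap in the case $g\notin A$, which is where all the work lies. Your other reductions are plausible: trivial $H$, $g\in H$, and $g\in A\setminus H$ via profinite malnormality of $\overline{A}$, though the last needs a precise citation for both cusp and maximal cyclic subgroups. The failing step is ``choose $\pp$ so that $\trace(\gamma)\not\equiv\pm\trace(gh)$ outside finitely many residue classes, then intersect kernels.'' Periodicity modulo $\pp$ makes finiteness of the bad classes automatic. What you actually need is that \emph{every} bad class is already excluded by the conjugacy-separating quotient $\psi$, and this fails in general. For loxodromic $H$, $\eta(\trace(gh^n))=\eta(\tau)$ iff $\eta(\lambda)^n\in\{\eta(\omega_1),\eta(\omega_2)\}$, where the $\omega_i$ are the roots of $ax^2-\tau x+d$. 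Two problems arise. First, a root $\omega_2$ that is not a power of $\lambda$ in $k$ may become a power of $\eta(\lambda)$ in $F^*$ (for instance whenever $\eta(\lambda)$ generates $F^*$). This creates spurious bad classes even when no element of $gH$ has trace $\tau$ at all. Second, for a genuine solution $\omega_1=\lambda^{n_1}$, the reduction only forces $n\equiv n_1$ modulo $\mathrm{ord}\,\eta(\lambda)$, whereas $\psi$ only excludes $n\equiv n_1$ modulo $o=\mathrm{ord}\,\psi(h)$. So you must fix $\psi$ first and then find a single prime with $\eta(\omega_2)\notin\langle\eta(\lambda)\rangle$ \emph{and} $o\mid\mathrm{ord}\,\eta(\lambda)$ simultaneously. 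A product of one quotient achieving the first condition and another achieving the second does not suffice, since $\omega_2$ can still be hit in the other factor. This simultaneous control is the paper's new \Cref{Prop:Algebraic}, a Kummer theory plus Tchebotarev argument, fed into \Cref{Prop:trace lemma} and \Cref{Prop:loxo lemma}. A separate argument handles $\omega_2=\lambda^r\zeta$ with $\zeta\neq 1$ a root of unity, where one must also separate $\gamma$ from $gh^r$. Nothing in your proposal replaces this.

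In the parabolic case the intersect-kernels strategy breaks down further. With $K=\langle h_1,h_2\rangle$ as in Notation~\ref{main_notation}, in a congruence quotient of characteristic $p$ the trace of $gh_1^mh_2^n$ depends only on $(m,n)\bmod p$. If $y_\pm\in Q_\omega\setminus Z_\omega$, no corresponding element of $gK$ has the right trace, yet the reduced equation has solutions for every $p$ prime to the denominator. If $y_\pm\in Z_\omega$, the bad set contains an entire coset of $p\Z^2$. Meanwhile $\psi$ excludes only a coset of the lattice $\{(m,n):\psi(h_1^mh_2^n)=1\}$, over which you have no control; unless it contains $p\Z^2$, intersecting kernels gives no contradiction. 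The missing idea is Dehn filling. Fill the cusp of $K$ along long slopes so that $K$ becomes cyclic loxodromic in a hyperbolic quotient while non-conjugacy survives; this is \Cref{Prop:not conjugate}, via the negatively curved $2\pi$-filling. Then apply \Cref{Prop:loxo lemma}(b) in the filled group with modulus $p$ (or $rp$), which is compatible with the mod-$p$ information from \Cref{Lem:LinearIndep}. Non-maximal $H$ additionally needs fillings that keep $h_0\notin H$ (\Cref{Lem:ExcludedSubgroupSimple}) and the case analysis of \Cref{claim:phiplus} and \Cref{claim:phiminus}.

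Your profinite fallback does not close the gap either. The closure identity is just a reformulation of the theorem, and ``trace finiteness reduces $\overline{H}$ to finitely many residue classes'' is the same unproved step. Congruence data only constrain $\hat h\in\overline{H}$ modulo the moduli visible in the reductions, spurious roots included, which never pins $\hat h$ down to an element of $H$. Double coset separability supplies none of this missing arithmetic.
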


Here is a topological interpretation of \Cref{Theorem:ConjugacyDistinguished}, in the special case where  $H$ is a maximal parabolic subgroup of $\Gamma$ with fixed point $p \in \bdy \H^3$, and $g \notin H$. Consider the cusp $C$ of $M$ corresponding to $p$, as well as the lifts of $C$ in covers of $M$.
The statement that  $gH$ is conjugacy distinguished in $\Gamma$ means the following: for every $\gamma \in \Gamma$ that is not conjugate into $gH$, there exists a finite-sheeted regular covering $\widehat M = \H^3/\widehat{\Gamma}$
 such that no conjugate of $\gamma$  in $\Gamma$ maps the cusp in $\widehat M$ corresponding to $p$ to the cusp corresponding to $g(p)$. Equivalently, $\gamma$ does not map the $k(p)$ cusp in $\widehat M$ 
to the $k(g(p))$ cusp for any $k \in \Gamma$. The subgroup $\widehat{\Gamma}$ can be taken to be the kernel of a homomorphism from $\Gamma$ to a finite group in which no conjugate of the image of $\gamma$ lies in the image of the coset $gH$.

If $S \subset \Gamma$ is conjugacy distinguished, then we can separate elements from all conjugates of $S$. 
It is also useful to separate subgroups from all conjugates of $S$. This leads to the following.

\begin{dfn}
A subset $S \subset G$ is  \emph{conjugacy distinguished from a class of subgroups} $\mathcal{C}$ of $G$ if, 
for every $K \in \mathcal{C}$ that  is disjoint from the union of conjugates of $S$, 
there exists a homomorphism $\varphi \from G \rightarrow F$, where $F$ is a finite group, such that $\varphi(K)$ is disjoint from the union of conjugates of $\varphi(S)$.
\end{dfn}

In previous work \cite{FHH}, we proved that if $M = \H^3/\Gamma$ is a hyperbolic $3$-manifold of finite volume,
then a coset of a maximal parabolic subgroup with cusp $C$ is conjugacy distinguished from the class of maximal parabolic subgroups of $\Gamma$ with cusps distinct from $C$. 
This separability result was a crucial ingredient in proving a geometric result: that every cusped hyperbolic $3$--manifold $M$ has a finite cover admitting infinitely many geometric ideal triangulations.

In this paper, we extend the separability result from \cite{FHH} to cosets of loxodromic subgroups.

\begin{thm}
\label{Theorem:Separability}
Let $M = {\H}^3 / \Gamma$ be a hyperbolic $3$-manifold of finite volume. 
Let $H$ be a loxodromic subgroup of $\Gamma$ and let $K$ be a maximal parabolic subgroup of $\Gamma$. 
Let $g \in \Gamma$ be an element such that $K$ is disjoint from every conjugate of $gH$. 
Then there exists a homomorphism $\varphi\from \Gamma \rightarrow G$, where $G$ is a finite group, such that
$\varphi(K)$ is disjoint from every conjugate of $\varphi(gH)$.
\end{thm}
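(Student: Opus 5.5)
Throughout, $H$ is a loxodromic subgroup of $\Gamma$ and $K$ a maximal parabolic subgroup. I will take $H$ to be cyclic, $H=\langle h\rangle$ with $h$ loxodromic; a loxodromic subgroup of a torsion-free Kleinian group is virtually cyclic and in fact cyclic. The strategy is to reduce to congruence quotients $\Gamma \to \PSL(2,\ok/\pp)$, arranged so that the image of $h$ is semisimple with distinct eigenvalues and the image of $K$ is unipotent. Conjugacy in the quotient can then be detected through traces and fixed points, as in \cite{FHH}.

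\textbf{Setup.} Since $\Gamma$ has finite covolume, conjugate it so that $\Gamma \subset \PSL(2,\mathcal{O})$, where $\mathcal{O}$ is a ring of $S$-integers in a number field $k$. Conjugate further so that $K$ fixes $\infty$, so $K$ consists of matrices $\pm\begin{pmatrix}1 & x\\ 0 & 1\end{pmatrix}$. For a prime $\pp$ of $\ok$ with residue field $\Fp$ (after a finite extension), reduction gives $\varphi_\pp\from \Gamma\to \PSL(2,\Fp)$ or $\PSL(2,\mathbb{F}_{p^2})$. In such a quotient, $\varphi_\pp(K)$ is unipotent.

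\textbf{Reduce to trace $\pm 2$ elements.} $K$ meets a conjugate of $gH$ exactly when some element $gh^n$ is conjugate into $K$. Such an element is necessarily parabolic, with trace $\pm2$. Consider the traces $\trace(gh^n)$ as $n$ varies. Diagonalize $h$ over a quadratic extension with eigenvalues $\lambda^{\pm1}$, where $|\lambda|\neq 1$. Then
\[
\trace(gh^n)=a\lambda^n+d\lambda^{-n}.
\]
This is a linear recurrence, so $\trace(gh^n)=\pm2$ for only finitely many $n$, unless $a=d=0$.

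\textbf{Separation step.} Choose $\pp$ so that the following hold.
\begin{enumerate}[(i)]
\item $\varphi_\pp(h)$ has order $m$ divisible by a large prime. Then $\varphi_\pp(gH)$ is the finite set $\{\varphi_\pp(gh^n): 0\le n<m\}$.
\item For each $n$ with $gh^n$ not parabolic, the reduction satisfies $\trace(gh^n)\not\equiv\pm2 \pmod{\pp}$. Here infinitely many $n$ must be handled uniformly, using the recurrence. The condition is $a\lambda^n+d\lambda^{-n}\equiv\pm2$, i.e.\ $a\mu^2\mp2\mu+d\equiv 0$ with $\mu=\lambda^n$. This quadratic has at most two roots $\mu_0$ in $\Fp$. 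So I must choose $\pp$ such that these roots avoid the cyclic group $\langle \lambda \bmod \pp\rangle$. This is a Chebotarev-type statement: the roots $\mu_0$ lie in a fixed number field, and I need $\mu_0$ not in the subgroup generated by $\lambda$ mod $\pp$ for infinitely many $\pp$. I would first try to obtain it from the power-residue argument used in \cite{FHH}. Pick a prime $\ell$ such that $\lambda$ is an $\ell$-th power mod $\pp$ but $\mu_0$ is not. By Kummer theory plus Chebotarev this is possible whenever $\mu_0\notin\lambda^{\Z}\cdot(k^\times)^{\ell}$-type degeneracies occur.
\item The finitely many $n$ with $gh^n$ parabolic need separate treatment. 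For these, $gh^n$ is conjugate into some other maximal parabolic subgroup $K'$ that is not conjugate to $K$. So they are handled by separating cusps, which is exactly the theorem of \cite{FHH} applied to the cosets involved. Alternatively, use an auxiliary finite quotient that distinguishes the conjugacy classes of cusp subgroups, and take the product homomorphism.
\end{enumerate}

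Finally, take $\varphi=\varphi_\pp\times\psi$, where $\psi$ handles case (iii). In the image, every $\varphi(gh^n)$ either has trace $\neq\pm2$, and so is not conjugate into a unipotent group, or is separated by $\psi$.

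\textbf{Main obstacle.} The main obstacle is step (ii), in two places. First, I must rule out the degenerate case where the roots $\mu_0$ are multiplicatively dependent on $\lambda$. If $\mu_0=\lambda^{n_0}$ exactly, then $gh^{n_0}$ really has trace $\pm2$, so this case falls under (iii). Otherwise I need a Chebotarev argument that gives infinitely many $\pp$ with $\mu_0\notin\langle\lambda\rangle \bmod \pp$. Second, the case $a=d=0$ must be addressed separately. In that case $g$ swaps the fixed points of $h$, so $g^2$ commutes with $h$ and $g$ is elliptic-like, which is impossible for torsion-free $\Gamma$ unless $g$ itself is such an element. I expect this to need a short separate argument.
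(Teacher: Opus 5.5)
There is a genuine gap, and it lies in how you combine steps (ii) and (iii), not only in the obstacle you flag. Suppose $gh^{n_0}$ is parabolic, so $\lambda^{n_0}$ is a root of $a\mu^2\mp2\mu+d$. In any congruence quotient, let $m$ be the order of $\varphi_\pp(h)$. Then $\varphi_\pp(gh^{n_0+jm})=\varphi_\pp(gh^{n_0})$ has trace $\pm2$ for every $j$. Yet $gh^{n_0+jm}$ is loxodromic for all but finitely many $j$, since at most four elements of $gH$ have trace $\pm2$ and $1\notin gH$. So condition (ii) cannot be satisfied once $gH$ contains a parabolic element.

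Your remark that an exact root $\mu_0=\lambda^{n_0}$ ``falls under (iii)'' misses this: modulo $\pp$, that root is hit by a whole arithmetic progression of exponents. The quotient $\psi$ from (iii) only controls $\psi(gh^{n_0})$. The elements $\psi(gh^{n_0+jm})=\psi(gh^{n_0})\psi(h)^{jm}$ are unconstrained, because you choose $\psi$ and $\pp$ independently. Nothing stops $\varphi_\pp\times\psi$ from sending them into a conjugate of the image of $K$.

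A repair within your framework would choose $\psi$ first, then $\pp$ so that the order of $\varphi_\pp(h)$ is a multiple of the order of $\psi(h)$, while keeping every other root out of $\langle\lambda \bmod \pp\rangle$. These demands can conflict. If another root is $\zeta\lambda^r$ with $\zeta\neq1$ a root of unity whose order divides that of $\psi(h)$, then cyclicity of $F^\ast$ forces $\zeta\lambda^r \bmod \pp$ into $\langle\lambda\bmod\pp\rangle$. Handling this needs the extra bookkeeping done in the proof of \Cref{claim:group homomorphism}.

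The paper avoids all of this. In its Case~2 it Dehn fills every cusp except the one of $K$ along long slopes:
\begin{itemize}
\item \Cref{Lem:AxesInNeighborhood} puts the closed geodesics of all loxodromic elements of $gH$ in one compact set.
\item \Cref{Lem:RemainLoxodromic}, via the negatively curved $2\pi$ filling, keeps those elements loxodromic.
\item The parabolic elements of $gH$ lie in conjugates of filled cusps, so they become loxodromic.
\end{itemize}
Thus $\psi_\ss(gH)$ has no parabolics, while $\psi_\ss(K)$ stays maximal parabolic and $\psi_\ss(H)$ stays loxodromic. The problem then reduces to the parabolic-free case in $\Gamma(\ss)$.

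In the parabolic-free case, step (ii) is the right idea and matches the paper's Case~1. But the part you leave open is exactly the paper's main algebraic content: \Cref{Prop:Algebraic}, fed into \Cref{Prop:trace lemma} with $\tau=\pm2$. Your $\ell$-th power sketch is essentially an argument for roots $\omega$ with $\langle\lambda,\omega\rangle\cong\Z\oplus\Z$. Even there, you must produce an $\ell$ for which the Kummer extensions generated by $\lambda^{1/\ell}$ and $\omega^{1/\ell}$ differ. The paper does this by choosing $q$ prime to the torsion of $U_S/\langle\lambda,\omega\rangle$, using the $S$-unit theorem and a norm argument.

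For dependent roots with $\omega^s=\lambda^t$, power residues are not the right tool in general. If $\omega=\zeta\lambda^r$ and $\ell$ is prime to the order of $\zeta$, then $\zeta$ is itself an $\ell$-th power, so $\omega$ is an $\ell$-th power mod $\pp$ whenever $\lambda$ is. The paper instead prescribes the order of $\lambda\bmod\pp$:
\begin{itemize}
\item divisible by $s$ when $s\nmid t$;
\item an odd prime larger than $s$ when $\omega=\zeta\lambda^r$.
\end{itemize}
When the two roots are of different types, it needs the case analysis (iv)--(vi) of \Cref{Prop:trace lemma}. That analysis includes a product of two congruence quotients with interlocking orders, so that a single quotient excludes both roots at once.
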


\Cref{Theorem:Separability} has the following topological interpretation,
in the case where $H$ is a maximal loxodromic subgroup. Let $\widetilde \alpha \subset \Hth$ be the geodesic in $\H^3$ stabilized by $H$. 
 Given $g \in \Gamma \setminus H$, the coset $gH$ is the set of all elements of $\Gamma$ that move $\widetilde \alpha$ to  another geodesic, namely $g \widetilde \alpha$. Let $\widetilde \beta$ be an arc that connects $\widetilde \alpha$ to $g \widetilde \alpha$. Projecting everything down to $M$, we see a closed geodesic $\alpha$ and an arc $\beta$ from $\alpha$ to $\alpha$.
Now, if $\widehat M$ is the finite cover of $M$ corresponding to the subgroup
  $\widehat \Gamma = \varphi^{-1} \circ \varphi(K)$, then the cusp corresponding to $K$ lifts to $\widehat M$
  and \emph{every} preimage of $\beta$ connects distinct preimages of $\alpha$.

We note that there are many situations where a cusp $C$ corresponding to a parabolic subgroup $K$ \emph{never} lifts to a regular cover of $M$. For instance, if $M$ is a knot complement in $S^3$, then $\Gamma \cong \pi_1(M)$ is normally generated by a meridian loop $\mu$ belonging to a parabolic subgroup $K$. So the only normal subgroup containing $K$ is $\Gamma$ itself. In such a situation, if one wants every preimage of $\beta \subset M$ to have some property in a cover where $C$ lifts, one is necessarily forced to work with irregular covers. Despite this difficulty, \Cref{Theorem:Separability} still provides an irregular cover with the desired properties.

\subsection{Proof strategy}\label{Sec:Strategy}
To prove \Cref{Theorem:ConjugacyDistinguished} and \Cref{Theorem:Separability}, we must construct appropriate finite quotients of $\Gamma$. By standard results, the discrete torsion-free group $\Gamma \subset PSL(2,\C)$ admits a lift to $SL(2, \C)$, and after conjugation we may assume that the entries of  $\Gamma$ lie in a finitely generated ring $R$ contained in a number field $k$.
If $S$ is a finite quotient of $R$, then we obtain a reduction homomorphism $$\varphi: \Gamma \hookrightarrow \SL(2, R) \rightarrow \SL(2, S).$$
Thus, the problem reduces to constructing suitable finite quotients of the ring $R$. 
To accomplish this, we use tools from algebraic number theory.
In \Cref{algebra}, we review the necessary background and prove a new subgroup separability result 
for the multiplicative group of units in a finitely generated ring. The new result, \Cref{Prop:Algebraic}, can be summarized as follows.
Let $\lambda \in R$,
and let $\{ \omega_i \} \subset R$ be a finite
collection of nonzero elements such that each multiplicative subgroup $\langle \lambda, \omega_i \rangle \subset k^{*}$ is free abelian of rank two.
We show that there exist a finite field $F$ and a ring homomorphism $\eta\from R \rightarrow F$ such that 
$\eta(\omega_i) \notin \langle \eta(\lambda) \rangle$ for all $i$.
Moreover, we can prescribe the multiplicative order of $\eta(\lambda)$ in $F^{\ast}$. 
This provides a controlled finite quotient of $R$ in which the given cyclic subgroup remains separated from finitely many fixed elements.
Together with a previously established result regarding additive subgroups of $R$ (\Cref{Prop:AddRingSep}), this
is the main algebraic tool for producing the finite quotients of $\Gamma$ required in the proofs of our main theorems. 

In some cases, when it does not suffice to work with the original representation of $\Gamma$ into $\SL(2,k)$, we turn to Dehn filling. We choose a long tuple of slopes $\ss$ along some cusps of $M$, fill those cusps to obtain a hyperbolic manifold $M(\ss)$, and then find reduction homomorphisms of $\pi_1(M(\ss))$ using the above algebraic techniques. The quotient homomorphism from $\Gamma \cong \pi_1(M)$ to   $\pi_1(M(\ss))$ needs to have the property that 
 certain loxodromic elements of $\pi_1(M)$ remain loxodromic, and that certain non-conjugate elements remain non-conjugate. While this is hard to prove algebraically, it turns out to be manageable using negatively curved Dehn filling. 
 
In a hyperbolic manifold $M$, conjugacy of elements in $\pi_1(M)$ is equivalent to free homotopy of loops, and every loxodromic loop is freely homotopic to a unique closed geodesic in the hyperbolic metric. The same properties hold in any negatively curved metric. We replace certain cusps of $M$ by solid tori of variable negative curvature, using the Gromov--Thurston $2\pi$--Theorem \cite{Bleiler-Hodgson:TwoPi}, while maintaining exactly the same metric on some compact core $M_0 \subset M$. If a closed geodesic of $M$ is contained in this compact core $M_0$, then it will still be a geodesic in the negatively curved metric on $M(\ss)$, and if two distinct closed geodesics are contained in $M_0$, then the corresponding loops will still belong to distinct free homotopy classes in $M(\ss)$. While this idea is easy to carry out for a finite set of loops in $M$, we are able to employ the method for all the closed geodesics representing all the loxodromics in an entire coset $gH$. We carry out these arguments in
\Cref{geometry}.

In \Cref{main}, we combine the above ingredients to prove \Cref{Theorem:ConjugacyDistinguished} and \Cref{Theorem:Separability}. The proofs involve detailed case analyses, using the traces of elements to obstruct conjugacy. A key step is \Cref{Prop:trace lemma}, which can be paraphrased as follows: if $H$ is a loxodromic subgroup and there are no elements in $gH$ with the same trace as $\gamma \in \Gamma$, then there is a finite quotient of $\Gamma$ where this property is preserved. This proposition is needed in the proofs of both main results.

\subsection*{Acknowledgements} The authors acknowledge the use of Claude to proofread the paper for typos and notational consistency. No AI was used for any other purpose in this paper. Futer was supported in part by NSF grant DMS--2405046.

\section{Coarse geometry, geodesic axes, and negatively curved Dehn filling}
\label{geometry}

In this section, we employ coarse geometric arguments in negatively curved metrics on a cusped $3$--manifold $M$ and its Dehn fillings. As previewed in \Cref{Sec:Strategy}, our goal is to prove that certain loxodromic elements of $\pi_1(M)$ remain loxodromic, and certain non-conjugate elements remain non-conjugate. We begin with arguments in coarse geometry based on the work of Cannon \cite{Cannon:CocompactHyperbolic} and then proceed to Dehn filling.

\subsection{Coarse geometry}\label{Sec:Coarse}
Given an interval $I \subset \R$ and constants $K, K' > 0$, a path $f \from I \to \H^n$ is called a \emph{$(K, K')$--quasigeodesic} if the following double inequality holds for every $x, y \in I$:
\[
\frac{1}{K} d(x,y) - K' \leq d(f(x), f(y)) \leq K \cdot  d(x,y) + K'. 
\]

We will need the following facts about quasigeodesics in $\H^n$. While both statements hold in the generality of $\delta$--hyperbolic spaces, we only need the results for $\H^n$, as presented in Cannon's prescient paper \cite{Cannon:CocompactHyperbolic}.

\begin{thm}[Local-to-global; see {\cite[Lemma 2 and Figure 7]{Cannon:CocompactHyperbolic}}]\label{Thm:LocalToGlobal}
For every angle $\theta > 0$, there is a constant $C = C(\theta)$ and 
 constants $K,K'>0$ such that the following holds. If $P$ is a piecewise geodesic path in $\H^n$ consisting of segments longer than $C$ that meet at angle greater than $\theta$, then $P$ is a $(K,K')$--quasigeodesic.
\end{thm}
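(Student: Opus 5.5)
We prove \Cref{Thm:LocalToGlobal} by reducing global behavior to local estimates in $\H^n$, using thin triangles and the explicit geometry of angles. The upper inequality is immediate with $K=1$, $K'=0$ once $P$ is parametrized by arc length, since $P$ is $1$--Lipschitz. All the work lies in the lower bound $d(P(x),P(y)) \geq \frac1K |x-y| - K'$.

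The first step is a two-segment estimate. Let $\delta = \log(1+\sqrt2)$ be the thinness constant of $\H^n$. If two geodesic segments $[a,b]$ and $[b,c]$ meet at $b$ with interior angle at least $\theta$, then
\[
d(a,c) \geq d(a,b) + d(b,c) - c_0(\theta).
\]
Here $c_0(\theta)$ depends only on $\theta$. This follows from the hyperbolic law of cosines, or from the observation that a geodesic triangle with angle $\geq\theta$ at $b$ has its insize near $b$ bounded by a function of $\theta$. Equivalently, $[a,c]$ passes within distance $r(\theta)$ of $b$. Concretely, the points on $[b,a]$ and $[b,c]$ at distance $t$ from $b$ lie at distance at least $f_\theta(t)$ from each other, and $f_\theta(t) \to\infty$ like $2t - O(1)$.

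The second step is an induction. Write the vertices of $P$ as $v_0,v_1,\dots,v_m$, with every segment longer than $C$. We claim by induction on $m$ that the geodesic $[v_0,v_m]$ passes within distance $r(\theta)$ of each interior vertex, and that the angle at $v_m$ between $[v_m,v_0]$ and $[v_m,v_{m-1}]$ is small, say less than $\theta/2$. For the inductive step, consider the triangle $v_0, v_{m}, v_{m+1}$. The angle at $v_m$ between $[v_m,v_0]$ and $[v_m,v_{m+1}]$ is at least $\theta - \theta/2 = \theta/2$. Since $d(v_m,v_{m+1}) > C$ and $d(v_0,v_m) > C$, comparison with ideal triangles makes the angle of this triangle at $v_{m+1}$ exponentially small in $C$. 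Once $C = C(\theta)$ is large, that angle is below $\theta/2$. The same comparison shows that $[v_0,v_{m+1}]$ passes close to $v_m$. Tracking closeness to the earlier vertices requires the fellow-traveling of $[v_0,v_m]$ with $[v_0,v_{m+1}]$ up to near $v_m$, which again follows from thinness.

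The third step converts the induction into a linear bound. Iterating the two-segment estimate along these nearby points gives
\[
d(v_0,v_m) \geq \sum_i d(v_{i-1},v_i) - m\,c_1(\theta).
\]
Since each segment has length $> C \geq 2c_1$, this is at least $\frac12 \sum_i d(v_{i-1},v_i)$. For general points $P(x),P(y)$ lying in the interiors of segments, we treat them as vertices of a subpath whose end segments may be short. This costs at most an additive constant, because the angle condition at the adjacent vertices still applies. The result is $K=2$ and $K'$ depending on $\theta$.

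The main obstacle is the inductive step: keeping the angle at the newest vertex under control uniformly in $m$. The point is that this angle must not degrade as more segments are added. I would handle it with the explicit hyperbolic trigonometry bound: in a triangle with two sides longer than $C$ and included angle at least $\theta/2$, each of the other two angles is at most $\epsilon(C,\theta)$, and $\epsilon(C,\theta)\to 0$ as $C\to\infty$. This can be read off from the hyperbolic law of sines and cosines, and the bound does not depend on $m$.
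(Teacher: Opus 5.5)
The paper does not prove this statement; it quotes it from Cannon. Your argument is therefore necessarily a different route: a self-contained proof by hyperbolic trigonometry. Its core is correct, and it has the advantage of producing explicit constants.

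The angle half of your induction is the right invariant, and it does not degrade with $m$. Suppose the triangle $v_0v_mv_{m+1}$ has angle at least $\theta/2$ at $v_m$, and set $b=d(v_m,v_{m+1})$. The hyperbolic cotangent (four-part) rule gives $\cot\beta\ge\sinh b-\cosh b\cos(\theta/2)$ for the angle $\beta$ at $v_{m+1}$, independently of $d(v_0,v_m)$. The law of cosines gives $d(a,c)\ge d(a,b)+d(b,c)-c_0(\phi)$ with $c_0(\phi)=\log\frac{4}{1-\cos\phi}$ whenever the angle at $b$ is at least $\phi$. Choose $C$ so that $\sinh C-\cosh C\cos(\theta/2)\ge\cot(\theta/2)$ and $C\ge 2c_0(\theta/2)$. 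This yields $K=2$ and $K'=c_0(\theta/2)$.

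Some parts of your write-up should be removed rather than repaired. The claim that $[v_0,v_m]$ passes within a uniform $r(\theta)$ of every interior vertex is never needed. Your justification also does not give a uniform constant: transferring closeness from $[v_0,v_m]$ to $[v_0,v_{m+1}]$ by thinness can cost up to $\delta$ per step, so the bound at $v_i$ would grow with $m-i$.

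Instead, in Step 3 iterate the two-segment estimate in the triangles $v_0v_iv_{i+1}$. There the induction already gives an angle greater than $\theta/2$ at $v_i$, so you get $d(v_0,v_m)\ge\sum_i d(v_{i-1},v_i)-(m-1)c_0(\theta/2)$ directly.

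Likewise, drop $d(v_0,v_m)>C$ from the inductive step, and the ``two sides longer than $C$'' form of the angle bound in your last paragraph. That hypothesis is not part of your induction hypothesis, and it fails at the first step when a subpath starting at $P(x)$ has a short initial segment. Only $d(v_m,v_{m+1})>C$ is needed to make the angle at $v_{m+1}$ small.

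This observation is exactly what makes your treatment of arbitrary points $P(x),P(y)$ legitimate. With a short initial segment the base case is trivial, and each inductive step adds a full segment. A short final segment enters only through one more application of the distance estimate, at the cost of an additive $c_0(\theta/2)$.
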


For instance, if $\theta > \pi/3$, one may compute that $C = \log 3$ suffices.

\begin{thm}[Morse lemma; {\cite[Theorem 2 and Lemma 1]{Cannon:CocompactHyperbolic}}]\label{Thm:Morse}
Given $K, K' >0$, there is a constant $W>0$ such that the following holds. If $P$ and $Q$ are  $(K,K')$ quasigeodesics from $x \in \overline{\H^n}$ to $y \in \overline{\H^n}$, then $P$ is contained in the $W$--neighborhood of $Q$, and vice versa. In particular, the hyperbolic geodesic from $x$ to $y$  is contained in the $W$--neighborhood of $P$.
\end{thm}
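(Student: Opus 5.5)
The statement to prove is the Morse lemma (\Cref{Thm:Morse}). My plan is the standard argument in two steps. First, show that a geodesic is contained in a bounded neighborhood of any quasigeodesic with the same endpoints. Second, show the reverse containment. The symmetric statement for two quasigeodesics $P,Q$ then follows by passing through the geodesic $[x,y]$: each lies within a bounded distance of $[x,y]$, and $[x,y]$ lies within a bounded distance of each. Since quasigeodesics need not be continuous, I first tame $P$. Replace $P$ by the piecewise geodesic path through the points $P(n)$ at integer parameters. This changes it by a Hausdorff distance of at most $K+K'$ and keeps it a $(K_1,K_1')$--quasigeodesic, now continuous and rectifiable, with length between consecutive vertices at most $K+K'$. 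For ideal endpoints $x$ or $y$ in $\bdy\H^n$, I would prove the compact case with a constant independent of length and then take limits over exhausting subsegments. The uniformity makes the limit argument routine.

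The key estimate uses exponential divergence in $\H^n$. Nearest-point projection $\pi$ onto a geodesic $\ell$ contracts by a factor of order $e^{-R}$ on paths staying outside the $R$--neighborhood of $\ell$. Hence a path of length $L$ that avoids $N_R(\ell)$ has projection of length at most about $L e^{-R}$, up to a constant. This can be checked directly in the upper half-space model, where the distance function to a vertical geodesic is explicit.

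Now let $D=\sup_{p\in[x,y]} d(p,P)$, attained at some $p$. Take the subsegment of $[x,y]$ of length $2D$ centered at $p$, with endpoints $a,b$; truncate at $x$ or $y$ if the segment would pass them. Choose points $a',b'$ on $P$ within $D$ of $a,b$. The portion of $P$ from $a'$ to $b'$ has endpoints at distance at most $O(D)$ apart. By the quasigeodesic inequalities, its parameter length is $O(D)$, so its length is $O(D)$. Concatenate it with the segments $[a,a']$ and $[b',b]$. The resulting path must avoid the ball of radius $D$ around $p$, or the choice of $p$ is contradicted. Projecting onto $[x,y]$ with the contraction estimate, a path avoiding $N_D(p)$ yet connecting the two sides of $p$ in projection must have length at least $c\,e^{D}$. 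This gives $c\,e^{D}\le O(D)$, so $D$ is bounded by a constant $W_0(K,K')$.

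For the reverse containment, suppose a subpath of $P$ exits $N_{W_0}([x,y])$. Its endpoints $u,v$ are within $W_0$ of points $u_0,v_0\in[x,y]$. Every point of $[u_0,v_0]$ is within $W_0$ of $P$, but not of the excursion. By connectedness, some point of $[u_0,v_0]$ is within $W_0$ of a point of $P$ before $u$ and also of a point after $v$. The quasigeodesic inequalities then bound the parameter length of the excursion by a constant, and hence its distance from $[x,y]$. The step I expect to be the main obstacle is the exponential contraction estimate for projection and its careful use in the first step. I would handle it by the explicit upper half-space computation rather than invoking general $\delta$--hyperbolicity.
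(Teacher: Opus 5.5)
Your strategy is sound, but the central step fails as written and needs two repairs. The paper gives no proof of its own here; it just quotes Cannon. Your outline is the standard proof: tame $P$, bound $D=\sup_{[x,y]}d(\cdot,P)$ by exponential divergence, control excursions by the connectedness argument, triangulate through $[x,y]$, and take limits for ideal endpoints. It is essentially the structure of Bridson--Haefliger's Theorem III.H.1.7 with an $\H^n$-specific divergence estimate, and the last three parts are fine.

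\emph{The avoidance claim is false.} With $a,b$ at distance $D$ from $p$, all you know about $a'\in P$ is $d(a,a')\le D$ and $d(a',p)\ge D$. If $a'$ lies on the sphere $S(p,D)$, then $[a,a']$ is a chord of that sphere, and by strict convexity it enters the open ball $B(p,D)$. Points of the connectors are not on $P$, so this contradicts nothing about the choice of $p$. Hence the concatenated path need not avoid $B(p,D)$.

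The fix is to take $a,b$ at distance $2D$ from $p$, i.e.\ a subsegment of length $4D$; if it is truncated at $x$ or $y$, take $a'=a$ there. Then every point of $[a,a']$ is at distance at least $2D-D=D$ from $p$. Also $d(a',b')\le 6D$, so the $P$-portion still has length $O(D+1)$. Keeping your radius $D$ also works, but the triangle inequality then only gives avoidance of $B(p,D/2)$, and $D$ must be replaced by $D/2$ in the estimate below.

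\emph{The projection estimate does not apply as stated.} Your contraction estimate is for paths staying outside $N_R([x,y])$, but the concatenated path $c$ begins and ends on $[x,y]$. Also, ``avoids $B(p,D)$ and crosses $p$ in projection'' is not enough: a short arc on $S(p,D)$ crossing the hyperplane through $p$ orthogonal to $[x,y]$ does both. What you must use is that the endpoints of $c$ lie on $[x,y]$, on opposite sides of $p$, at distance at least $D$ from it.

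Write $s$ for the signed arclength coordinate of nearest-point projection to $[x,y]$, with $s(p)=0$, and $r=d(\cdot,[x,y])$. Then $s\circ c$ must sweep across $[-D/2,D/2]$. At any point $q$ of $c$ with $|s(q)|\le D/2$,
\[
\cosh r(q)\cosh s(q)=\cosh d(q,p)\ge \cosh D,
\]
so $\cosh r(q)\ge \cosh D/\cosh(D/2)\ge e^{D/2}/2$. Since the metric in Fermi coordinates dominates $\cosh^2 r\,ds^2$, the length of $c$ is at least $\tfrac12 D e^{D/2}$.

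Alternatively, radial projection onto $S(p,D)$ is length-nonincreasing outside $B(p,D)$ and sends the endpoints of $c$ to antipodal points, so the length of $c$ is at least $\pi\sinh D$.

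Either exponential lower bound, compared with your upper bound $C(K,K')(D+1)$, bounds $D$ as you intended.
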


We can now apply Theorems~\ref{Thm:LocalToGlobal} and~\ref{Thm:Morse} to establish a fact about the closed geodesics representing elements of a coset.

\begin{lem}
\label{Lem:AxesInNeighborhood}
Let $M = \H^3 / \Gamma$ be a hyperbolic $3$--manifold, let $g\in \Gamma$, and let $H = \langle h \rangle$ be a loxodromic subgroup of $\Gamma$. Let $\alpha$ be the closed geodesic in the free homotopy class of $h$.
Then there is a constant $W > 0$, depending on $g$ and $H$, such that every loxodromic element of $gH$ corresponds to a closed geodesic contained in the $W$--neighborhood of $\alpha$.
\end{lem}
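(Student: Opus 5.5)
Work in the universal cover. Let $A \subset \H^3$ be the axis of $h$, so that $\alpha$ is the image of $A$, and let $\ell>0$ be the translation length of $h$. The plan is to show that for all but finitely many $n$, the axis of $gh^n$ lies in a uniformly bounded neighborhood of $A \cup gA$, hence projects into a bounded neighborhood of $\alpha$. The finitely many remaining loxodromic elements of $gH$ have finitely many closed geodesics, each compact, and so each lies in some neighborhood of $\alpha$. Taking the maximum of these radii gives $W$.

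First I dispose of the degenerate case $g \in H$. Then $gH = H$ and every loxodromic element of $gH$ is a power of $h$, so its geodesic is $\alpha$ itself (up to multiplicity). So assume $g \notin H$. Since $\Gamma$ is discrete and torsion-free, the stabilizer of $A$ is a cyclic loxodromic group containing $H$. If $gA = A$, then $g$ lies in that stabilizer, $gh^n$ preserves $A$, and every loxodromic element of $gH$ has axis $A$; we are done. Otherwise $gA \neq A$, and the two geodesics share at most... in fact no endpoint, since discreteness forbids loxodromics with one common fixed point. So there is a common perpendicular segment $\sigma$ from $p \in A$ to $gq \in gA$, where $q \in A$; if they intersect, take $\sigma$ to be a point. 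Note that $gA$ is the axis of $ghg^{-1}$.

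Now build a bi-infinite $gh^n$-invariant piecewise geodesic $P_n$. Start at $gq$, run along $gA$ to $gh^n p$ (a segment whose length is at least $|n|\ell - d(p,q)$), then along $gh^n\sigma$ to $gh^n g q$, and continue by translating under $\langle gh^n\rangle$. Actually it is cleaner to use the broken path with vertices $x_k = (gh^n)^k x_0$: take $x_0 = gq$ and join consecutive vertices through the point $g h^n q'$ on $gA$ chosen so the long segments lie along translates of $gA$ and the short connecting segments are translates of $\sigma$. The difficulty is that the short segments $\sigma$ have bounded, but possibly small, length, so \Cref{Thm:LocalToGlobal} does not apply directly. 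To fix this I replace each piece "long segment along $gA$, then $\sigma$" by the geodesic through the same endpoints at every other vertex. Equivalently, I use the path whose vertices alternate between points of $gA$ far apart, so that all segments have length at least roughly $|n|\ell - 2d(p,q) - |\sigma|$. By hyperbolic geometry, the angles at the vertices tend to $\pi$ or are bounded below by the fixed angle that $\sigma$ makes with $A$, up to errors that tend to $0$ as $|n|\to\infty$.

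Here is the main obstacle. I need the angles at the vertices to be bounded below uniformly, say by $\theta = \pi/4$, for all large $|n|$, with segment length exceeding $C(\theta)$. I would handle it by choosing vertices on $A$-translates at distance far from the feet of $\sigma$, using that a geodesic from a far point of $gA$ to a far point of $A$ makes angle close to $\pi/2$ with each. Once this holds, \Cref{Thm:LocalToGlobal} makes $P_n$ a $(K,K')$-quasigeodesic with constants independent of $n$. Because $P_n$ is $gh^n$-invariant, it is a quasigeodesic between the fixed points of $gh^n$; in particular $gh^n$ is loxodromic. Then \Cref{Thm:Morse} puts the axis of $gh^n$ in the $W_0$-neighborhood of $P_n$. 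Finally, $P_n$ lies in the $(|\sigma|+d(p,q))$-neighborhood of $\Gamma$-translates of $A$, so the axis projects into a uniform neighborhood of $\alpha$.
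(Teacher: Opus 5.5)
Your overall route is the paper's: a $gh^n$-invariant broken path along translates of the axis, joined by a fixed connector. Your common perpendicular $\sigma$ plays the role of the paper's arc $\widetilde\beta$. You then shortcut near each connector so that all segments are long and all angles are bounded below, and finish with \Cref{Thm:LocalToGlobal} and \Cref{Thm:Morse}.

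The gap is in the step you yourself call the main obstacle: the geometric fact you invoke there is false. Take $a\in A$ and $b\in gA$ at distance $R$ from the feet $p$ and $gq$ of $\sigma$. The chord $[a,b]$ does not meet $A$ at nearly a right angle. Instead, the angle at $a$ between $[a,b]$ and $[a,p]$ tends to $0$ as $R\to\infty$. For instance, if $A$ and $gA$ cross at right angles, the isosceles right triangle with legs $R$ has base angles $\arctan(1/\cosh R)$.

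Your intermediate claim about the merged path with vertices $x_k=(gh^n)^k x_0$ is also wrong. Its vertex angles are not bounded below by the angle $\pi/2$ that $\sigma$ makes with $A$. As $|n|\to\infty$, the angle at $x_0=gq$ converges to the visual angle at $gq$ between two ideal points: the end $\xi$ of $A$ from which the path arrives, and the end $\eta$ of $gA$ toward which it leaves. This angle is small when $A$ and $gA$ are close and nearly antiparallel along $\sigma$. So, as written, the angle bound, which is the heart of the lemma, is not established.

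The bound does hold, and the missing input is something you already proved: $A$ and $gA$ share no endpoint, so $\xi\neq\eta$ for either sign of $n$. As $R\to\infty$, the segment $[a,b]$ converges to the geodesic from $\xi$ to $\eta$. Hence $d(p,[a,b])$ stays bounded while $d(a,p)=R$. It follows that the angle at $a$ between $[a,b]$ and $[a,p]$ tends to $0$. So the vertex angle of your path at $a$, between $[a,b]$ and the ray of $A$ toward $\xi$, tends to $\pi$, and $d(a,b)\to\infty$. Fixing $R$ large and then taking $|n|$ large gives segments longer than $C(\pi/2)$ meeting at angles greater than $\pi/2$.

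The merged path also works once you use that its limiting vertex angle is positive because $\xi\neq\eta$. Only uniformity in $n$ matters; $\theta$ may depend on $g$ and $H$. For comparison, the paper obtains its angle bound from convexity. The chord lies inside the tube $N_r(\widetilde\beta)$, while the axes cross $\bdy N_r(\widetilde\beta)$ at angle greater than $\pi/3$. The nearly right angle is with the boundary of the tube, not with the chord.
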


\begin{proof}
We may assume without loss of generality that $H$ is a maximal abelian subgroup. (If not, then replace $H$ by the maximal abelian subgroup that contains it). Thus $\alpha$ is a primitive geodesic. If $g \in H$, then every nontrivial element of $gH = H$ is a loxodromic whose closed geodesic is some power of $\alpha$. Thus, if $g \in H$, the conclusion of the lemma holds for an arbitrary $W > 0$.

We now assume that $g \notin H$. 
Choose a basepoint $x$ on $\alpha$, and let $\beta$ be the shortest loop based at $x$ representing the element $g$ in  $\pi_1(M, x)$. Thus $\beta$ is the projection to $M$ of a geodesic segment  $\widetilde \beta \subset \H^3$, which begins at $\widetilde x \in \widetilde \alpha$ and ends at $g (\widetilde x) \in g (\widetilde \alpha)$.

Choose a large radius $r > 0$, so that the neighborhood $N = N_r(\widetilde \beta)$ has the following properties:
\begin{itemize}
\item At each intersection point of  $(\widetilde \alpha \cup g\widetilde \alpha) \cap \bdy N$, the angle of intersection is greater than $\pi/3$.

\item The four points of  $(\widetilde \alpha \cup g\widetilde \alpha) \cap \bdy N$ are located at pairwise distance greater than $C = C(\pi/3)$.
\end{itemize}

The constants in the two bulleted conditions are chosen with an eye toward applying \Cref{Thm:LocalToGlobal}.
Consider a piecewise geodesic path $\gamma$  that follows a long segment of $\widetilde \alpha$ (whose portion outside $N$ is longer than $C$), then follows $\widetilde \beta$, then another long segment of $g\widetilde \alpha$. Now, form a path $\gamma'$ by replacing  $\gamma \cap N$ by the geodesic segment between the two points of $\gamma \cap \bdy N$.
(See \Cref{Fig:Quasigeodesic}.) Then $\gamma'$ is a piecewise geodesic path whose segments are longer than $C$ and whose angles are greater than $\pi/3$, which is a $(K, K')$ quasigeodesic by \Cref{Thm:LocalToGlobal}.

\begin{figure}
\begin{overpic}[width=4in]{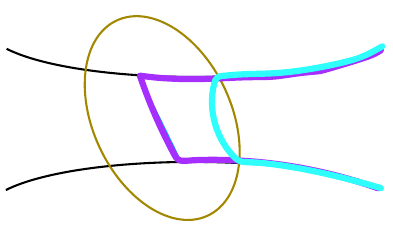}
\put(10,43.5){$\widetilde \alpha$}
\put(10,9){$g \widetilde \alpha$}
\put(36,25){$\widetilde \beta$}
\put(27,48){$N = N_r(\widetilde \beta)$}
\put(45,34){$\gamma$}
\put(55,28){$\gamma'$}
\end{overpic}
\vspace{-1ex}
\caption{A piecewise geodesic path $\gamma$  can be modified inside the neighborhood $N = N_r(\widetilde \beta)$ to a path $\gamma'$ that must be a quasigeodesic by \Cref{Thm:LocalToGlobal}.}
\label{Fig:Quasigeodesic}
\end{figure}

Consider a loxodromic element $g h^n \in gH$. Then $gh^n$ stabilizes a bi-infinite piecewise geodesic path  $P_n$ in $\H^3$ of the following form: starting at  $\widetilde x$, follow the lift $\widetilde \beta$ to $g \widetilde x$,  then follow $|n|$ fundamental domains of $\alpha$ contained in a single geodesic line covering $\alpha$, then another lift of $\beta$, then another $|n|$ fundamental domains of $\alpha$, and so on. When $|n|$ is sufficiently large, any portion of $P_n$ between consecutive lifts of $\beta$ will be longer than $(C+2r)$. Now, we modify $P_n$ to a path $P'_n$ as follows. Every time $P_n$ traverses some translate $\sigma(\widetilde \beta)$ for $\sigma \in \Gamma$, we take a shortcut through the corresponding translate $\sigma(N_r(\widetilde \beta))$, exactly as in  \Cref{Fig:Quasigeodesic}. Since $P'_n$ consists of 
segments longer than $C$ joined by angles greater than $\pi/3$, \Cref{Thm:LocalToGlobal} implies that $P'_n$ is a $(K, K')$ quasigeodesic for some uniform $(K, K')$.

The element $g h^n$ also stabilizes a geodesic line $\widetilde \gamma_n$, whose endpoints on $\bdy_\infty \H^3$ agree with those of $P_n$ and $P'_n$.
By the Morse Lemma (\Cref{Thm:Morse}), there is a uniform constant $W > 0$ such that $\widetilde \gamma_n$ lies within a $W$--neighborhood of $P'_n$. By increasing $W$ to incorporate the diameter of $N = N_r(\widetilde \beta)$, we also ensure that every point of $\widetilde \gamma_n$ lies within a $W$--neighborhood of some translate of $\widetilde \alpha$. The projection of $\widetilde \gamma_n$ to $M$ is a closed geodesic lying in a $W$--neighborhood of $\alpha$.

This proves the conclusion of the lemma for elements $g h^n \in gH$ where $|n|$ is sufficiently large. Since there are only finitely many values of $n$ failing this hypothesis, simply taking the worst-case scenario proves the lemma for all loxodromics in $gH$.
\end{proof}

\subsection{Dehn filling}
We will apply \Cref{Lem:AxesInNeighborhood} in the context of Dehn filling. We recall the setup. Let $M = \H^3/\Gamma$ be a hyperbolic $3$-manifold of finite volume with distinct cusps $C_1, \dots, C_\ell$ (which may or may not be all the cusps of $M$). Let $B_1, \ldots, B_\ell$ be disjoint horospherical neighborhoods of  $C_1, \ldots, C_\ell$.  For every cusp torus $\bdy B_i$,  we choose a slope $s_i$, and package these slopes in a tuple $\ss = (s_1, \ldots, s_\ell)$. The Dehn filled manifold $M(\ss)$ is constructed from $M$ by excising each $B_i$ and replacing it with a solid torus whose meridian disk is glued to $s_i$. On the level of group theory, we have $\pi_1(M(\ss)) \cong \pi_1(M) / \nclose{s_1, \ldots, s_\ell}$, and there is a quotient homomorphism $\psi_\ss \from \pi_1(M) \to \pi_1(M(\ss))$.

When each $s_i$ is sufficiently long (as measured in the Euclidean metric on $\bdy B_i$), Thurston's Dehn surgery theorem says that $M(\ss)$ admits a complete hyperbolic metric. However, for this section, we will use a different choice of metric, with variable negative curvature.

Suppose that the Euclidean representative of $s_i$ on the torus $\bdy B_i$ is strictly longer than $2\pi$. Then the Gromov--Thurston $2\pi$ theorem \cite[Theorem 9]{Bleiler-Hodgson:TwoPi} guarantees that the Dehn filled manifold $M(\ss)$ admits a metric of variable negative curvature, obtained by replacing each cusp neighborhood $B_i$ by a negatively curved solid torus $V_i$. In particular,  $M \setminus (\bigcup B_i)$ embeds isometrically into $M(\ss)$ with this metric.

The negatively curved metric on $M(\ss)$ is locally $CAT(-\kappa)$ for some $\kappa < 0$. (See \cite[Theorem 2.1]{FKP:Volume} for explicit estimates on curvature in terms of the Euclidean lengths of slopes $s_1, \ldots, s_\ell$.) Accordingly, the elements of $\pi_1(M(\ss))$ can be identified as \emph{elliptic}, \emph{parabolic}, and \emph{hyperbolic} as follows:
\begin{itemize}
\item The only elliptic element of $\pi_1 (M(\ss))$ is the identity.
\item An element $\sigma \in \pi_1 (M(\ss))$ is parabolic if and only if a loop representing $\sigma$ is freely homotopic into a cusp neighborhood (corresponding to one of the cusps of $M$ that did not get filled).
\item An element $\sigma \in \pi_1 (M(\ss))$ is hyperbolic if and only if a loop representing $\sigma$ is freely homotopic to a closed geodesic. This geodesic is unique, because $M(\ss)$ is negatively curved.
\end{itemize}
We emphasize that for this section, the quotient homomorphism $\psi_\ss \from \pi_1(M) \to \pi_1(M(\ss))$ is considered on the level of abstract groups only. By the above bullets, the identification of an element $\psi_\ss(\gamma)$ as elliptic/parabolic/hyperbolic is insensitive to the exact choice of negatively curved metric. 

In \Cref{main} of this paper, we will need to consider the complete hyperbolic metric on $M(\ss)$ and work with the algebraic properties of entries in the matrices corresponding to  elements of $\pi_1(M(\ss))$. However, for this section, we will only work with the metric of variable negative curvature constructed using the $2\pi$--theorem. The main advantage of this viewpoint is that the metrics on $M$ and $M(\ss)$ agree on the nose outside the cusp neighborhoods and filled solid tori. 

\begin{lem}
\label{Lem:RemainLoxodromic}
Let $M = \H^3/\Gamma$ be a hyperbolic 3-manifold of finite volume, with at least one cusp. Let $X$ be any set of loxodromic elements in $\Gamma$, with the property that the union of all closed geodesics corresponding to the elements of $X$ is contained in a compact subset $R  \subset M$. Then, for any sufficiently long Dehn  filling tuple $\ss = (s_1, \ldots, s_\ell)$,  the image of every element of $X$ remains loxodromic in $\pi_1(M(\ss))$. Furthermore, if $\gamma_1, \gamma_2 \in X$ are not conjugate in $\pi_1(M)$, then $\psi_\ss(\gamma_1)$ and $\psi_\ss(\gamma_2)$ are not conjugate in $\pi_1(M(\ss))$.
\end{lem}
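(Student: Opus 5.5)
The plan is to choose the horospherical neighborhoods $B_1, \ldots, B_\ell$ small enough that they are disjoint from $R$ (and from a slightly larger compact set). Then apply the Gromov--Thurston $2\pi$ theorem to obtain a negatively curved metric on $M(\ss)$ which agrees isometrically with the hyperbolic metric on $M \setminus \bigcup B_i \supset R$. Shrinking the horoballs lengthens the Euclidean length of every fixed slope. To keep ``sufficiently long'' independent of that choice, fix the $B_i$ first, using only $R$, and then require each $s_i$ to have length greater than $2\pi$ on $\bdy B_i$. All but finitely many slopes on each cusp satisfy this, which is what ``sufficiently long'' means.

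For the first claim, let $\gamma \in X$ and let $c$ be its closed geodesic in $M$. Since $c \subset R$ and the metric near $R$ is unchanged, $c$ remains a locally geodesic closed curve in $M(\ss)$. A nontrivial closed local geodesic in a negatively curved (locally CAT$(-\kappa)$) manifold lifts to a geodesic line in the universal cover $\widetilde{M(\ss)}$. That line is invariant under a deck transformation representing $\psi_\ss(\gamma)$, acting by translation of length $\ell(c) > 0$. Hence $\psi_\ss(\gamma)$ is nontrivial and is represented by a loop freely homotopic to a closed geodesic, so it is hyperbolic (loxodromic) by the classification bullets. It is not parabolic either, since an isometry with an axis of positive translation length is not parabolic.

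For the second claim, suppose $\gamma_1,\gamma_2 \in X$ are non-conjugate in $\pi_1(M)$. Their closed geodesics $c_1, c_2 \subset R$ are then distinct as closed geodesics, meaning distinct as parametrized loops up to reparametrization. This holds including when $c_2$ is a different multiple of the same primitive geodesic, or has the opposite orientation. If $\psi_\ss(\gamma_1)$ and $\psi_\ss(\gamma_2)$ were conjugate, then $c_1$ and $c_2$ would be freely homotopic in $M(\ss)$, and both are closed geodesics there by the previous paragraph. By uniqueness of the closed geodesic in a free homotopy class in negative curvature, $c_1 = c_2$ as oriented closed geodesics with multiplicity. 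But equal oriented closed geodesics in $M$ represent conjugate elements of $\pi_1(M)$, which is a contradiction.

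The main subtlety is the step asserting that a closed geodesic in the unchanged region remains a geodesic in $M(\ss)$. This needs only that being geodesic is a local property and that the metric is unchanged on a neighborhood of $R$. The second delicate point is the correct statement of uniqueness, namely Cartan--Hadamard together with strict negative curvature: the axis of a hyperbolic isometry is unique, so two geodesic loops in one free homotopy class coincide with the same period. I would take care that the uniqueness applies to oriented loops with multiplicity, since this is what distinguishes $\gamma$ from $\gamma^{k}$ and from $\gamma^{-1}$.
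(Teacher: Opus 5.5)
Your proposal is correct and follows essentially the same route as the paper: you choose the horospherical neighborhoods $B_i$ disjoint from $R$, apply the Gromov--Thurston $2\pi$ theorem so that $R$ embeds isometrically in $M(\ss)$, note that the closed geodesics of $X$ remain closed geodesics there (hence loxodromic), and use uniqueness of oriented closed geodesics with multiplicity in a free homotopy class in negative curvature to preserve non-conjugacy. Your added detail (locality of the geodesic condition, the axis in the universal cover, and the orientation/multiplicity bookkeeping) spells out points the paper states more briefly.
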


\begin{proof}
Let $B_1, \ldots, B_\ell$ be disjoint horospherical neighborhoods of cusps  $C_1, \ldots, C_\ell$, which are small enough to avoid the compact set $R$. Then, let $\ss = (s_1, \ldots, s_\ell)$ be any tuple of slopes on  $C_1, \ldots, C_\ell$, such that the Euclidean length of $s_i$ on $\bdy B_i$ is longer than $2\pi$. As mentioned above, the Gromov--Thurston $2\pi$ theorem guarantees that $M(\ss)$ admits a negatively curved metric that is obtained from the metric on $M$ by replacing each $B_i$ with a negatively curved solid torus.

Now, let $\gamma \in X \subset \pi_1(M)$. By hypothesis, a loop in $M$ representing $\gamma$ is freely homotopic to a closed geodesic contained in $R$. Since the metric on $R$ is unchanged in $M(\ss)$, the same closed curve is a closed geodesic in $M(\ss)$ representing the free homotopy class of $\psi_\ss(\gamma)$. Thus $\psi_\ss(\gamma)$ is still loxodromic in $\pi_1(M(\ss))$. 

Finally, suppose that $\gamma_1, \gamma_2 \in X$ are not conjugate. Then they are represented by 
 distinct closed, oriented geodesics contained in $R$. (These geodesics are not necessarily primitive, and different powers of a curve are considered distinct.) Since $R$ embeds isometrically into $M(\ss)$, these closed geodesics are still distinct in $M(\ss)$, and the lemma follows.
\end{proof}

\begin{lem}
 \label[lemma]{Conjugate abelian implies equal}
If $M = \H^3/\Gamma$ is a hyperbolic $3$-manifold of finite volume and $K$ is a maximal abelian subgroup of $\Gamma$, then two elements 
of $K$ are conjugate in $\Gamma$ if and only if they are equal.
\end{lem}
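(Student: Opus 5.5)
The ``if'' direction is trivial, so the plan is to prove that if $a, b \in K$ and $b = \sigma a \sigma^{-1}$ for some $\sigma \in \Gamma$, then $a = b$. We may assume $a \neq 1$, since the identity is conjugate only to itself. Because $\Gamma$ is discrete and torsion-free, $a$ is either loxodromic or parabolic. The central fact we will use is that in $\PSL(2,\C)$ a nontrivial element and the elements commuting with it share the same fixed set on $\bdy \H^3$. So $K$, being maximal abelian, is exactly the stabilizer in $\Gamma$ of the fixed set $\mathrm{Fix}(a) \subset \bdy\H^3$. This set is a pair of points in the loxodromic case and a single point in the parabolic case. Every nontrivial element of $K$ has this same fixed set.

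Since $b = \sigma a \sigma^{-1}$ is nontrivial and lies in $K$, we get $\mathrm{Fix}(b) = \mathrm{Fix}(a)$. On the other hand, $\mathrm{Fix}(\sigma a \sigma^{-1}) = \sigma(\mathrm{Fix}(a))$. Hence $\sigma$ preserves $\mathrm{Fix}(a)$.

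We then split into cases according to the type of $a$.

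\emph{Parabolic case.} Here $\sigma$ fixes the parabolic fixed point $p$. In a discrete torsion-free group, every element fixing a parabolic fixed point is parabolic with that fixed point, because a loxodromic element fixing $p$ together with a parabolic fixing $p$ would violate discreteness. So $\sigma$ lies in the stabilizer of $p$, which is abelian, and by maximality it equals $K$. Then $\sigma$ commutes with $a$, and $b = a$.

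\emph{Loxodromic case.} Here $\sigma$ either fixes both endpoints of the axis or swaps them. If it fixes both, then $\sigma$ stabilizes the axis pointwise at infinity, so it commutes with $a$; hence $\sigma \in K$ and $b = a$. If $\sigma$ swaps the endpoints, then $\sigma$ preserves the geodesic and reverses its orientation. An isometry of a line that reverses orientation fixes a point of the line, so $\sigma$ would be elliptic. But $\sigma \neq 1$, since it acts nontrivially on the endpoints, and this contradicts torsion-freeness.

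The main obstacle is making the statement ``commuting elements share fixed sets, and a parabolic stabilizer contains no loxodromics'' rigorous. I would handle it by normalizing to upper triangular matrices (fixing $\infty$) and using discreteness. The conjugation $\begin{pmatrix}\lambda & * \\ 0 & \lambda^{-1}\end{pmatrix}$ applied to a translation $z \mapsto z+t$ yields $z \mapsto z + \lambda^{2} t$. Taking powers with $|\lambda| \neq 1$ produces translations by $\lambda^{2n} t$ with arbitrarily small nonzero length, which contradicts discreteness.
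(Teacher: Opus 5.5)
Your proposal is correct and takes the same route as the paper: a conjugating element must stabilize the fixed set of $K$ on $\bdy \H^3$, and maximality of $K$ then forces it into $K$. Your case analysis supplies details that the paper's two-line proof leaves implicit, namely ruling out loxodromics in a parabolic stabilizer by discreteness and ruling out an endpoint swap by torsion-freeness.

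One small correction: your ``central fact'' that commuting elements share fixed sets is false in general in $\PSL(2,\C)$ for order-two elliptics (e.g.\ $z\mapsto -z$ and $z\mapsto 1/z$ commute but have different fixed sets). It does hold for parabolic and loxodromic elements, which is all you use, since $\Gamma$ is torsion-free.
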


\begin{proof}
Let $x, y \in K$ and suppose that $x = z y z^{-1}$ for some $z \in \Gamma$. Then $z$ stabilizes the fixed point(s) of $K$ at infinity.
Since $K$ is a maximal abelian subgroup, it follows that $z \in K$. Therefore, $x = y$, as required.
\end{proof}

\begin{lem}
\label{Lem:NotConjugateSimple}
Let $M = \H^3/\Gamma$ be a hyperbolic $3$-manifold of finite volume, with at least one cusp. 
Let $X \subset \Gamma$ be a finite set whose elements 
are pairwise non-conjugate in $\Gamma$.  Then, for every sufficiently  
long Dehn filling $\ss$, the map $\psi_{\ss}$ is injective on $X$, and
the elements of $\psi_{\ss}(X)$  are pairwise  non-conjugate in $\pi_1(M(\ss))$.
\end{lem}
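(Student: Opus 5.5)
The plan is to split $X$ according to the type of each element, handle loxodromics with \Cref{Lem:RemainLoxodromic}, and handle the identity and parabolics by separate arguments. Since $X$ is finite and pairwise non-conjugate, it contains at most one identity element. The remaining elements are loxodromic or parabolic.

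Let $X_L \subset X$ be the loxodromic elements. Their closed geodesics form a finite union of compact curves, hence lie in a compact set $R \subset M$. By \Cref{Lem:RemainLoxodromic}, for every sufficiently long filling $\ss$ each $\psi_\ss(\gamma)$ with $\gamma \in X_L$ is loxodromic, so nontrivial. Moreover, elements of $\psi_\ss(X_L)$ are pairwise non-conjugate; in particular they are pairwise distinct.

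For a parabolic $\gamma \in X$, let $C(\gamma)$ be the cusp containing it. If we fill only some of the cusps, there are two cases.

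\emph{Unfilled cusp.} If $C(\gamma)$ is not among $C_1,\dots,C_\ell$, then $\psi_\ss(\gamma)$ is represented by a nontrivial loop in a cusp torus of $M(\ss)$. In the negatively curved metric it is a nontrivial parabolic, since the peripheral torus still $\pi_1$-injects. Such elements are never conjugate to loxodromics or to the identity. Two parabolics in different unfilled cusps remain non-conjugate, because they fix distinct parabolic points, up to the group action. Two parabolics in the same unfilled cusp are non-conjugate in $\Gamma$, so by \Cref{Conjugate abelian implies equal} they are distinct elements of the peripheral $\Z^2$. I would show they stay non-conjugate in $\pi_1(M(\ss))$ by the same argument: the peripheral subgroup of an unfilled cusp remains maximal abelian in the negatively curved quotient.

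\emph{Filled cusp.} The more delicate case is a parabolic in a filled cusp $C_i$. Here $\psi_\ss(\gamma)$ is a multiple of the core curve of the filling solid torus. For $\ss$ long enough, $\gamma$ is not a multiple of $s_i$, since the finitely many elements of $X$ in that cusp rule out only finitely many slopes. Then $\psi_\ss(\gamma)$ is a nonzero power of the core curve, which is a closed geodesic in the filled solid torus, so it is loxodromic. To compare it with the images of $X_L$, note that the core geodesic lies in $V_i$, while geodesics of $X_L$ lie in $R$, disjoint from $V_i$. Distinct closed geodesics give non-conjugate elements, because geodesics are unique in negative curvature.

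For two parabolics in filled cusps, I would distinguish two situations. If they lie in different cusps, their images lie on different cores. If they lie in the same cusp, their images are the multiples $a\cdot\text{core}$ and $b\cdot\text{core}$, where $a,b$ are the intersection numbers with $s_i$. To keep these distinct, including their orientation, I would choose $s_i$ so that the finitely many nonzero differences $\gamma-\gamma'$ in $\Z^2$ are not multiples of $s_i$. Then the intersection numbers differ, and this again excludes only finitely many slopes, so it is compatible with "sufficiently long".

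\textbf{Main obstacle.} The hardest step is the unfilled-cusp parabolic case: it requires that the peripheral subgroup stays maximal abelian and that its elements remain non-conjugate. The cleanest way around it is simply to fill every cusp. Then every image is loxodromic, and the whole argument reduces to the geodesic-uniqueness comparison above plus the finite avoidance of slopes.
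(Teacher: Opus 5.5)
Your treatment of the loxodromic elements (via \Cref{Lem:RemainLoxodromic}) follows the paper's argument. So do the parabolics in filled cusps: their images are nonzero powers of the core geodesic of a solid torus disjoint from $R$. Pairs in the same filled cusp, handled by excluding the finitely many slopes that divide the pairwise differences, also match. The paper phrases that last point as a length condition on $\sigma_i\sigma_j^{-1}$ together with \Cref{Conjugate abelian implies equal}, which is equivalent to your intersection-number and oriented-geodesic reasoning.

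The gap is the case of a tuple that fills only some of the cusps, which the statement covers: tuples in this paper live on cusps $C_1,\dots,C_\ell$ that need not be all cusps of $M$. In that case you only assert that an unfilled peripheral subgroup stays maximal abelian. Your proposed workaround, ``simply fill every cusp'', proves the lemma for full tuples only, and nothing in the proposal carries the conclusion back to a partial tuple.

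The missing step is the reduction the paper makes at the end of its proof. Given a sufficiently long partial tuple $\tt$, extend it to a full tuple $\ss$ whose additional slopes are also long and avoid the finitely many forbidden slopes. Then $\psi_\ss = \pi \circ \psi_\tt$, where $\pi \from \pi_1(M(\tt)) \to \pi_1(M(\ss))$ is the further filling quotient. If $\psi_\tt(x)$ and $\psi_\tt(y)$ were equal or conjugate, then so would be $\psi_\ss(x)$ and $\psi_\ss(y)$, contradicting the full-filling case. With this one-line observation your argument is complete.

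Alternatively, you could finish your direct route. A nontrivial element of an unfilled peripheral subgroup acts on the universal cover of $M(\tt)$ as a parabolic isometry with a unique fixed point at infinity, since that cusp neighborhood is unchanged and embedded. So any element conjugating one such element to another in the same peripheral subgroup fixes that point. It therefore stabilizes the corresponding horoball lift, and hence lies in that abelian subgroup. As written, however, this step is only asserted.
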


\begin{proof} 
Let $X = \{\sigma_1, ..., \sigma_n \}$. We may assume without loss of generality that if $\sigma_i, \sigma_j$ are conjugate into 
the same abelian subgroup, then they already belong to the same abelian subgroup.
We may also assume without loss of generality that $1 \in X$.

Let $R \subset M$ be the union of all closed geodesics corresponding to loxodromic elements of $X$. Then $R$ is compact, because $X$ is finite.

Next, we choose a tuple of filling slopes $\ss$ satisfying the following criteria:
\begin{enumerate}[\:\:$(1)$]
\item\label{Itm:LongR} The tuple $\ss$ is sufficiently long to satisfy \Cref{Lem:RemainLoxodromic} for the given compact set $R$. In particular, $R$ embeds isometrically into $M(\ss)$ with its negatively curved metric.
\item\label{Itm:LongSigma} If $\sigma_i$ and $\sigma_j$ both belong to a common maximal parabolic subgroup $K$, then the Euclidean length of any filling slope on the cusp corresponding to $K$ is strictly longer than the length of $\sigma_i  \sigma_j^{-1}$. In particular, since $1 \in X$, the Euclidean length of the filling slope is longer than the length of every $\sigma_i \in K$, which ensures that $\psi_\ss(\sigma_i) \neq 1$.
\end{enumerate}
We emphasize that both criteria will be satisfied when $\ss$ is sufficiently long.

{Now, assume that $\ss$ is a tuple of slopes on \emph{all} the cusps of $M$ satisfying criteria \eqref{Itm:LongR} and \eqref{Itm:LongSigma}. Then $M(\ss)$ is a closed $3$--manifold whose fundamental group contains no parabolics. We claim that $\psi_\ss \from \pi_1(M) \to \pi_1(M(\ss))$ satisfies the conclusion of the lemma.} 

{The claim can be checked as follows.} First, by \Cref{Lem:RemainLoxodromic}, if $\sigma_i$ and $\sigma_j$ are loxodromic elements of $X$, then $\psi_\ss(\sigma_i)$ and $\psi_\ss(\sigma_j)$ remain loxodromic and non-conjugate 
in $\pi_1(M(\ss))$. Second, if one of $\sigma_i$ and $\sigma_j$ is parabolic and the other is loxodromic, then the loxodromic element will remain loxodromic (with axis contained in $R$) while the parabolic element will be mapped into a loxodromic subgroup carried by a negatively curved solid torus disjoint from $R$. In particular, $\psi_\ss(\sigma_i)$ and $\psi_\ss(\sigma_j)$ are not conjugate. Third, if $\sigma_i$ and $\sigma_j$ are parabolic elements corresponding to distinct cusps of $M$, then they are mapped to nontrivial elements in loxodromic subgroups carried by disjoint negatively curved solid tori, which means that $\psi_\ss(\sigma_i)$ and $\psi_\ss(\sigma_j)$ are not conjugate.

Fourth, suppose that $\sigma_i$ and $\sigma_j$ are both parabolic elements corresponding to the same cusp, corresponding to a maximal parabolic subgroup $K$. Then $\psi_\ss(\sigma_i)$ and $\psi_\ss(\sigma_j)$ will both belong to the same abelian subgroup $\psi_\ss(K)$. By \Cref{Conjugate abelian implies equal}, $\psi_\ss(\sigma_i)$ and $\psi_\ss(\sigma_j)$ can only be conjugate if they are equal. But the Dehn filling slope on the cusp corresponding to $K$ is strictly longer than $\sigma_i  \sigma_j^{-1}$, which implies $\psi_\ss(\sigma_i  \sigma_j^{-1}) \neq 1$. {Thus $\psi_\ss$ satisfies all the conclusions of the lemma.}

{Finally, suppose that $\tt$ is a tuple of slopes on only some cusps of $M$, satisfying \eqref{Itm:LongR} and \eqref{Itm:LongSigma}. We complete $\tt$ to a tuple of slopes $\ss$ on all the cusps, still satisfying \eqref{Itm:LongR} and \eqref{Itm:LongSigma}. By the above argument, the filling homomorphism $\psi_\ss \from \pi_1(M) \to \pi_1(M(\ss))$ satisfies the conclusion of the lemma. But by construction, $\psi_\ss$ factors through $\psi_\tt \from \pi_1(M) \to \pi_1(M(\tt))$, so $\psi_\tt$ must also satisfy the desired conclusions.}
\end{proof}

\begin{lem}
\label{Lem:ExcludedSubgroupSimple}
Let $M = \H^3/\Gamma$ be a hyperbolic $3$-manifold of finite volume. Let $K$ be a maximal 
parabolic subgroup of $\Gamma$.
Let $H$ be a subgroup of $K$, and suppose $k_0 \in K \setminus H$. Then there exist arbitrarily long Dehn filling tuples $\ss$, such that $\psi_{\ss}(k_0) \notin \psi_{\ss}(H)$.
\end{lem}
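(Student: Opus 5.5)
The plan is to fill the cusp of $K$ itself with a slope chosen so that, in the resulting quotient of $K$, the image of $k_0$ still lies outside the image of $H$. The slope will be picked by a purely algebraic argument inside $K \cong \Z^2$. Any other cusps of $M$ may be filled with arbitrary long slopes, or left unfilled.

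\emph{Step 1: algebra in $K$.} Write $K \cong \Z^2$. Since $k_0 \notin H$, the image $\bar k_0$ of $k_0$ in the finitely generated abelian group $K/H$ is nontrivial. Finitely generated abelian groups are residually finite, and every finite abelian group has a character nonvanishing on a given nonzero element. Hence there is a surjection $\chi \from K \to \Z/m$ for some $m \geq 2$, with $H \subset \ker\chi$ and $\chi(k_0) \neq 0$.

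By Smith normal form, choose a basis $e_1, e_2$ of $K$ with $\chi(e_1) = 1$ and $\chi(e_2) = 0$. For $t \in \Z$, set $s_t = mt\, e_1 + e_2$. Each $s_t$ is primitive in $K$ and lies in $L = \ker \chi$. The Euclidean length of $s_t$ on the cusp torus tends to infinity with $|t|$. Since $H + \langle s_t \rangle \subset L$ and $k_0 \notin L$, we get $k_0 \notin H + \langle s_t\rangle$.

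\emph{Step 2: the filling kills exactly $\langle s_t \rangle$ on $K$.} Let $\ss$ be any tuple that uses $s_t$ on the cusp of $K$ and slopes longer than $2\pi$ on any other cusps being filled. Take $|t|$ large enough that $s_t$ also has length greater than $2\pi$. The $2\pi$ theorem gives a negatively curved metric on $M(\ss)$ in which the cusp neighborhood of $K$ is replaced by a negatively curved solid torus $V$. The map $\psi_\ss|_K$ factors as
\[
K \to \pi_1(V) = K/\langle s_t\rangle \cong \Z \to \pi_1(M(\ss)).
\]
I would argue that the second map is injective. The core of $V$ is a closed geodesic in the negatively curved metric, so it represents a nontrivial, hence hyperbolic, element. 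Its powers are then nontrivial, because $\pi_1(M(\ss))$ is torsion-free; equivalently, the only elliptic element is the identity. Thus $\ker(\psi_\ss|_K) = \langle s_t \rangle$.

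\emph{Step 3: conclude.} Suppose $\psi_\ss(k_0) = \psi_\ss(h)$ for some $h \in H$. Then $k_0 h^{-1} \in \ker(\psi_\ss|_K) = \langle s_t\rangle$, so $k_0 \in H + \langle s_t \rangle$, contradicting Step 1. Letting $|t|\to\infty$, together with arbitrarily long slopes on the other filled cusps, gives arbitrarily long tuples $\ss$ with $\psi_\ss(k_0) \notin \psi_\ss(H)$.

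The main obstacle is Step 2: justifying that $\psi_\ss|_K$ has kernel exactly $\langle s_t \rangle$, i.e.\ that the core curve of the filled solid torus has infinite order in $\pi_1(M(\ss))$. I would handle this via the geodesic core of the $2\pi$-theorem metric. Alternatively, one could cite hyperbolic Dehn filling, where the core is a short closed geodesic. Step 1 is routine once it is phrased through a cyclic quotient of $K$. Forcing $s_t \in \ker\chi$ is exactly what avoids the trap that $H$ itself may contain no primitive vectors, as happens for $H = 2\Z^2$.
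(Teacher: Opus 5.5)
Your proposal is correct and follows essentially the same route as the paper: take a finite cyclic quotient of $K$ that kills $H$ but not $k_0$, choose long primitive slopes in its kernel ($s_t = mt\,e_1 + e_2$, the paper's $q_m = ab^{mn}$), and fill along them. The only difference is that you explicitly justify that the filling kills exactly $\langle s_t\rangle$ on $K$, by showing the core of the filled solid torus has infinite order. The paper takes this for granted when it speaks of a filling ``whose effect on $K$ is to quotient by $\langle q_m\rangle$.''
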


\begin{proof}
Let  $C_1, \ldots, C_\ell$ be the cusps of $M$, so that $C_1 = A$ corresponds to the maximal parabolic subgroup $K$. Fix a horospherical neighborhood $B_1$ about $C_1$. We will make a careful choice of slope $s_1$ on $\bdy B_1$, and then complete $s_1$ to a long tuple of slopes on all the cusps.

Since the image of $k_0$ in $K/H$ is nontrivial, the residual finiteness of $K/H$ provides a finite cyclic group $G$ and a homomorphism $\varphi \from K \to G$ that factors through $K/H$, with the property that $k_0 \notin \ker(\varphi)$. Since $G \cong \Z/n\Z$ for some $n > 1$, we may choose a generating set $a,b$ for $K \cong \Z^2$ so that $\ker(\varphi)$ is generated by $a$ and $b^n$. By construction, $H \subset \ker(\varphi)$.

Now, observe that for every $m \in \Z$, the element $q_m := a  b^{mn} \in \ker(\varphi)$ is primitive in $K$, because $K$ is generated by $b$ and $a b^{mn}$. Thus $q_m$ is represented by a simple closed curve on $\bdy B_1$, i.e.\ a Dehn filling slope. As $m \to \infty$, the Euclidean length of  the slope defined by $q_m$ also grows to $\infty$. 
Furthermore, the coset $k_0 \langle q_m \rangle \subset k_0 \ker(\varphi)$ is disjoint from  $\ker(\varphi)$, hence disjoint from $H$. Thus, in any Dehn filling quotient $\psi_\ss \from \pi_1(M) \to \pi_1(M(\ss))$ whose effect on $K$ is to quotient by $\langle q_m \rangle$, we have $\psi_\ss(k_0) \notin \psi_\ss(H)$.

We build a tuple $\ss$ as follows. Let $s_1$ be the slope on $\bdy B_1$ represented by $q_m = a b^{mn}$ for some $m$, and let $s_2, \ldots, s_\ell$ be arbitrary long slopes on $C_2, \ldots, C_\ell$. By choosing $m \gg 0$ and choosing $s_2, \ldots, s_\ell$ sufficiently long, we obtain an arbitrarily long tuple of slopes $\ss$. Since $q_m \in \ker(\varphi)$, the above argument ensures $\psi_\ss(k_0) \notin \psi_\ss(H)$.
\end{proof}

Combining the above results, we obtain:

\begin{prop}
\label[prop]{Prop:not conjugate}
Let $M = \H^3/\Gamma$ be a hyperbolic $3$-manifold of finite volume. Let $K$ be a maximal 
parabolic subgroup of $\Gamma$.
Let $H$ be subgroup of $K$, and suppose $k_0 \in K \setminus H$.
Fix an element $\gamma \in \Gamma$, and let $X \subset \Gamma$ be a finite set 
such that no element of $X$ is conjugate to $\gamma$ in $\Gamma$.  Then there exist generators
$h_1$ and $h_2$ of $K$, representing slopes that can be completed to tuples $\ss_1$ and 
$\ss_2$, such that the following hold:
\begin{enumerate}[\:\:$(1)$]
\item\label{Itm:HyperbolicFilling} For $j = 1,2$, the filled manifold $M(\ss_j)$ is hyperbolic.
\item\label{Itm:NotConjugate} For $j = 1,2$, no element in $\psi_{\ss_j}(X)$  is conjugate to $\psi_{\ss_j}(\gamma)$ in $\Gamma(\ss_j)$.
\item\label{Itm:propThree} For $j = 1$ only, $\psi_{\ss_1}(k_0) \notin \psi_{\ss_1}(H)$.
\end{enumerate}
\end{prop}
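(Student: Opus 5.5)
We will combine the three preceding lemmas, using the freedom in \Cref{Lem:ExcludedSubgroupSimple} to choose slopes of arbitrary length. First we reduce \eqref{Itm:NotConjugate} to \Cref{Lem:NotConjugateSimple}. Since $X$ is finite and no element of $X$ is conjugate to $\gamma$, we replace $X$ by a finite set $X'$ of pairwise non-conjugate representatives: one element from each $\Gamma$--conjugacy class meeting $X$, together with $\gamma$. Then $\gamma$ is not conjugate to any other element of $X'$. By \Cref{Lem:NotConjugateSimple}, there is a length $L_0$ such that every filling tuple $\ss$ whose slopes all have length greater than $L_0$ makes the elements of $\psi_\ss(X')$ pairwise non-conjugate. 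Every element of $\psi_\ss(X)$ is conjugate to some element of $\psi_\ss(X' \setminus \{\gamma\})$, so none of them is conjugate to $\psi_\ss(\gamma)$. This gives \eqref{Itm:NotConjugate} for every sufficiently long tuple.

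For \eqref{Itm:HyperbolicFilling}, Thurston's hyperbolic Dehn surgery theorem gives a length $L_1$ such that $M(\ss)$ is hyperbolic whenever all slopes exceed $L_1$; also enforce length $>2\pi$. Let $L=\max(L_0,L_1,2\pi)$. Apply \Cref{Lem:ExcludedSubgroupSimple}, following its proof. We obtain a primitive $h_1=q_m=ab^{mn}\in K$, with $m$ large enough that its slope exceeds $L$. Complete it with slopes longer than $L$ on the other cusps to get $\ss_1$. Then $\psi_{\ss_1}(k_0)\notin\psi_{\ss_1}(H)$, which is \eqref{Itm:propThree}, and \eqref{Itm:HyperbolicFilling} and \eqref{Itm:NotConjugate} hold for $\ss_1$.

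The statement asks for $h_1,h_2$ to be \emph{generators} of $K$, i.e.\ a basis of $K\cong\Z^2$. So we need a second primitive slope $h_2$, longer than $L$, with $\{h_1,h_2\}$ a basis. The natural choice is $h_2=h_1^{j}b=ab^{mn}\cdot\ldots$, or more simply $h_2 = b\,h_1^{j}$ for large $j$. Since $\{h_1,b\}$ is a basis of $K$ (as noted in the proof of \Cref{Lem:ExcludedSubgroupSimple}), so is $\{h_1, bh_1^j\}$. Its Euclidean length tends to infinity with $j$, so choose $j$ with length $>L$. Complete $h_2$ by the same long slopes on the other cusps to get $\ss_2$; then \eqref{Itm:HyperbolicFilling} and \eqref{Itm:NotConjugate} hold for $\ss_2$ by the choice of $L$.

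The main obstacle is making sure that one uniform length threshold controls all three conclusions simultaneously, despite the slope on the cusp of $K$ being constrained to lie in $\ker\varphi$. This is exactly why we use the ``arbitrarily long'' clause of \Cref{Lem:ExcludedSubgroupSimple} and the ``every sufficiently long'' form of \Cref{Lem:NotConjugateSimple} and Thurston's theorem, rather than a fixed filling. There is also the case where $M$ has other cusps. Since \Cref{Lem:NotConjugateSimple} allows partial tuples, we may either fill all cusps or only some; we fill all of them with long slopes for definiteness.
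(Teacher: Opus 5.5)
Your proposal is correct and follows the paper's proof. \Cref{Lem:ExcludedSubgroupSimple} gives a long primitive $h_1$ and a tuple $\ss_1$ satisfying conclusion (3); Thurston's theorem and \Cref{Lem:NotConjugateSimple} give (1) and (2) once all slopes are long enough; and $h_1$ is completed to a basis by a long $h_2$ to build $\ss_2$. Your passage to a set $X'$ of pairwise non-conjugate representatives is a small bit of extra care that the paper skips, since it applies \Cref{Lem:NotConjugateSimple} directly to $X \cup \{\gamma\}$, whose elements need not be pairwise non-conjugate.
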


\begin{proof}
{Let $A$ be the cusp of $M$ corresponding to $K$.}
By \Cref{Lem:ExcludedSubgroupSimple}, we may find an arbitrarily long Dehn filling tuple $\ss_1$, 
whose coordinate on cusp $A$ is represented by an element $h_1 \in K$, such that $\psi_{\ss_1}(k_0) \notin \psi_{\ss_1}(H)$. Thus, conclusion \eqref{Itm:propThree} holds. Choosing $\ss_1$ sufficiently long ensures that $M(\ss_1)$ is hyperbolic by
Thurston's hyperbolic Dehn surgery theorem, so conclusion \eqref{Itm:HyperbolicFilling} holds for 
$\ss_1$. 
By \Cref{Lem:NotConjugateSimple} applied to $X \cup \{ \gamma \}$, choosing $\ss_1$ sufficiently long also ensures that conclusion \eqref{Itm:NotConjugate} holds for $\ss_1$. 

Now, $h_1 \in K$ is a very long primitive element. There are infinitely many ways to complete $h_1$ to a generating set $h_1, h_2$ of $K$, and the slopes represented by these choices of $h_2$ become arbitrarily long on the cusp $A$. Thus, by choosing a sufficiently long slope $h_2$ and then completing $h_2$ to a tuple $\ss_2$, we ensure that conclusions  \eqref{Itm:HyperbolicFilling}  and \eqref{Itm:NotConjugate} also hold for $\ss_2$.
\end{proof}

\section{Algebraic preliminaries}
\label{algebra}
In this section and the next, we will work extensively with the number-theoretic properties of traces of matrices representing hyperbolic isometries.
When considering quotients $\Hth/\Gamma$, we will assume that $\Gamma$ is a subgroup of $\PSL(2,\C)$ acting on $\Hth$ by isometries. We will always lift $\Gamma$ to $\SL(2,\C)$, using the following theorem.

\begin{thm}[Thurston~\cite{Thurston:Notes}, Bass~\cite{bass1980groups}, Culler--Shalen~\cite{CuSh}]
\label{Thm:ThurstonLiftConjugate}
Let $M = \Hth/\Gamma$ be a hyperbolic $3$-manifold of finite volume. Then 
\begin{enumerate}[\:\:$(1)$]
\item\label{Itm:RepLifts} $\Gamma \subset \PSL(2,\C)$ can be lifted to $\SL(2,\C)$. 
\item\label{Itm:ConjugateRing} $\Gamma$ can be conjugated in $\SL(2,\C)$ to lie in 
$\SL(2,R) \subset \SL(2,k)$, where $R$ is a finitely generated subring of a number field $k$.
\end{enumerate}
\end{thm}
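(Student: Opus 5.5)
This is a classical theorem, so I would assemble the standard arguments rather than build anything new, proving \eqref{Itm:RepLifts} and \eqref{Itm:ConjugateRing} separately.

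For \eqref{Itm:RepLifts}, the obstruction to lifting $\Gamma \subset \PSL(2,\C)$ to $\SL(2,\C)$ is a class in $H^2(\Gamma;\Z/2)$: it is the pullback of the central extension $1 \to \{\pm I\} \to \SL(2,\C) \to \PSL(2,\C) \to 1$. Since $\Gamma$ is torsion-free, $M$ is a $K(\Gamma,1)$, and the class is the second Stiefel--Whitney class of a flat $\PSL(2,\C)$-bundle over $M$. Following Culler--Shalen, I would identify this flat bundle with (a bundle stably related to) the bundle of orthonormal frames of $M$, using $\PSL(2,\C) \simeq \mathrm{SO}(3)$ up to homotopy. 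Orientable $3$-manifolds are parallelizable, so the class vanishes. For cusped $M$, I would apply the same argument to the compact core, which is homotopy equivalent to $M$. The lift may have to be chosen carefully, but no further condition is needed for existence.

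For \eqref{Itm:ConjugateRing}, I would start from Mostow--Prasad rigidity: $\Gamma$ is determined up to conjugacy by its abstract isomorphism type. Hence the $\SL(2,\C)$-character of the lifted representation is an isolated point of the character variety of a finitely presented group, and that variety is defined over $\Q$. An isolated point of a $\Q$-variety has algebraic coordinates, so every trace $\trace \gamma$ lies in $\overline{\Q}$. Since the variety is cut out by finitely many trace functions of words in the generators, finitely many such traces generate a number field $k_0$ containing all traces.

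Next I would conjugate into $\SL(2,k)$ following Bass's argument. I would pick non-commuting $a,b \in \Gamma$ and normalize so that $a$ is diagonal, or use the standard form in which the matrix entries are rational functions of $\trace a$, $\trace b$, $\trace ab$, and a square root of a discriminant. This gives $\Gamma \subset \SL(2,k)$ for a finite extension $k$ of $k_0$. The key identity is that the quaternion algebra $k_0[\Gamma]$ has basis $1,a,b,ab$, with every element of $\Gamma$ a $k_0$-combination of these; this controls all entries once the generators are controlled. Finally, let $R$ be the ring generated by the entries of the finitely many generators of $\Gamma$ and their inverses. Since the inverse of an $\SL_2$ matrix has entries $\pm$ the original entries, $\Gamma \subset \SL(2,R)$ with $R \subset k$ finitely generated.

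I expect the main obstacle to be the algebraicity of traces in the cusped case. Mostow--Prasad rigidity must be applied to representations that are discrete and faithful of finite covolume, but nearby points of the character variety need not be discrete. Thurston's hyperbolic Dehn surgery shows that the character is not isolated in the full variety, so I cannot simply take an isolated component. My first approach would be to restrict to the subvariety of characters where the peripheral generators have trace $\pm 2$. On that subvariety, local rigidity of the cusped structure (Garland, Weil, and Thurston's deformation theory) shows that the complete structure is isolated. Since the subvariety is still defined over $\Q$, algebraicity follows. I would cite this rather than reprove it.
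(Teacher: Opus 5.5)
The paper gives no proof of this statement; it simply quotes it from Thurston, Bass, and Culler--Shalen. Your outline is the standard argument from those sources and is correct: for the lift, the obstruction is $w_2$ of the flat bundle, which reduces to the trivial frame bundle; traces are algebraic by local rigidity on the $\Q$-subvariety where peripheral traces are $\pm 2$; and you conjugate into $\SL(2,k)$ via the quaternion algebra with basis $1,a,b,ab$. Two small imprecisions in your first inference. Even for closed $M$, isolation of the character follows from Weil's local rigidity (or Weil's stability of cocompact lattices combined with Mostow), not from Mostow--Prasad alone. And the trace field is a number field because every $\trace\gamma$ is an integer polynomial in the finitely many coordinate traces $\trace(g_{i_1}\cdots g_{i_k})$, not because the variety is cut out by them.
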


Recall that a \emph{number field} is a finite field extension of $\mathbb{Q}$. 
We  now describe some needed algebraic results.
The first of these is  \cite[Theorem 2.6]{HWZ:Separability}, which allows us to distinguish an element  $g \in R$ from a finitely generated submodule of $R$ that does not contain $g$.

\begin{prop}[Hamilton--Wilton--Zalesskii \cite{HWZ:Separability}]
\label[prop]{Prop:AddRingSep}
Let $R$ be a finitely generated ring in a number field $k$.
By fixing a $\Q$ embedding of $k$ into $\C$, we may view $k \subset \C$.
Let $\omega \in R$, and set
$Z_\omega = \{ m + n\omega \mid m, n \in \mathbb{Z} \}$ and $Q_\omega = \{ m + n\omega \mid m, n \in \Q \}$.
If $y \in R \setminus Q_{\omega}$, then there exist a finite ring $S$
and a ring homomorphism $\rho \from R \rightarrow S$ such that
$\rho(y) \notin \rho(Z_{\omega})$. \qed
\end{prop}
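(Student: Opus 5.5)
The statement is Hamilton--Wilton--Zalesskii's Theorem 2.6, and it is marked \qed, so we could simply cite it. Still, here is how I would prove it directly.

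The plan is to reduce to a finite field in which $y$ is linearly independent from $1$ and $\omega$ over the prime field. There we can read off $\rho(Z_\omega) \subset \{m + n\rho(\omega) : m,n \in \Fp\}$, and $\rho(y)$ avoids this set. So we look for a maximal ideal $\pp$ of $R$, with residue field $F = R/\pp$ of characteristic $p$, such that $\bar y \notin \Fp + \Fp\bar\omega$ inside $F$. Then the reduction map $\rho\from R \to F$ works, because every element of $\rho(Z_\omega)$ has the form $\bar m + \bar n\bar\omega$ with $\bar m, \bar n \in \Fp$.

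First, reduce to the case where $1,\omega,y$ are $\Q$-linearly independent. There are two cases.
\begin{enumerate}[\:\:$(1)$]
\item If $\omega \notin \Q$, then $y \notin Q_\omega$ says exactly that $1,\omega,y$ are independent over $\Q$.
\item If $\omega \in \Q$, then $Q_\omega = \Q$ and $y \notin \Q$. Then $1,y$ are independent, and it suffices to keep $\bar y \notin \Fp$ while $\bar\omega \in \Fp$.
\end{enumerate}
I treat the main case~(1); case~(2) is analogous with a $2\times 2$ determinant.

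Next, extend $1,\omega,y$ to a $\Q$-basis $1,\omega,y,e_4,\dots,e_d$ of $k$. Invert finitely many integers, replacing $R$ by $R' = R[1/N]$, so that this basis spans a free $\Z[1/N]$-module containing $R'$ and $\mathcal{O}_k[1/N]$. This is possible because $R$ is finitely generated, so its generators have bounded denominators.

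The key step is choosing $\pp$. By Chebotarev's density theorem there are infinitely many primes $p \nmid N$ that split completely in the Galois closure of $k$. For such $p$, every prime $\pp$ of $\mathcal{O}_k$ over $p$ has residue field $\Fp$, which is bad for us: everything lands in $\Fp$. So I would instead use the product of reductions. Take $\rho\from R \to \mathcal{O}_k/p\mathcal{O}_k \cong \prod_{\pp\mid p} \Fp \cong \Fp^{\,d}$ for a split prime $p$, realized by reducing the $d$ embeddings $\sigma_i$ modulo a prime $\mathfrak{P}$ above $p$ in the Galois closure; $R$ maps into this ring once $p \nmid N$. In this finite ring $S = \Fp^{\,d}$, the image $\rho(Z_\omega)$ lies in the $\Fp$-span of $\rho(1),\rho(\omega)$, and $\rho(y)$ lies outside that span as long as the vectors $\rho(1),\rho(\omega),\rho(y)$ are $\Fp$-independent. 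Independence holds when some $3\times 3$ minor of the matrix $(\sigma_i(1),\sigma_i(\omega),\sigma_i(y))_i$ is nonzero modulo $\mathfrak{P}$.

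By $\Q$-independence of $1,\omega,y$ and Dedekind's independence of characters, the $d\times 3$ matrix $(\sigma_i(x))$ has rank $3$ over the Galois closure. So some minor $\Delta$ is a nonzero algebraic number, and it is a unit at all but finitely many primes. Choosing the split prime $p$ outside this finite set and coprime to $N$ finishes the main case. Case~(2) works the same way using a nonzero $2\times 2$ minor built from $1$ and $y$.

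The main obstacle is the insight that a single residue field $\Fp$ is too small, so one needs the full reduction $\mathcal{O}_k/p$, or a non-split prime with large residue degree. The remaining denominator bookkeeping is routine.
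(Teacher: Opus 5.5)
Your argument is correct. The paper does not prove this proposition: it quotes Theorem 2.6 of Hamilton--Wilton--Zalesskii verbatim, hence the \qed. So your proof is a self-contained substitute rather than a variant of an argument in the paper.

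It is heavier than it needs to be, because your bookkeeping step already finishes the proof. The lattices $\bigoplus_i \Z e_i$ (with $e_1=1$, $e_2=\omega$, $e_3=y$) and $\mathcal{O}_k$ both have full rank in $k$, so each contains a nonzero integer multiple of the other. Enlarging $N$ therefore gives $\mathcal{O}_k[1/N]=\bigoplus_i \Z[1/N]e_i \supseteq R$. For \emph{every} prime $p\nmid N$, the finite ring $S=\mathcal{O}_k[1/N]/p\,\mathcal{O}_k[1/N]$ has $\Fp$-basis $\bar e_1,\dots,\bar e_d$. The reduction $\rho$ sends $Z_\omega$ into $\Fp\bar e_1+\Fp\bar e_2$, and $\rho(y)=\bar e_3$ lies outside this span. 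When $\omega\in\Q$, take $e_2=y$ instead; since $\omega\in\Q\cap\mathcal{O}_k[1/N]=\Z[1/N]$, we get $\rho(\omega)\in\Fp\bar e_1$. Your version models $S\cong\Fp^d$ concretely through the embeddings, but it pays for this with Chebotarev and Dedekind independence, neither of which is needed.

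Your observation that a single residue field can be too small is the right one. In fact the alternative you mention in passing, a single prime of large residue degree, is not always available. Take $k=\Q(\sqrt2,\sqrt3)$, $R=\Z[\sqrt2,\sqrt3]$, $\omega=\sqrt2+\sqrt3$ and $y=\sqrt2$.
\begin{itemize}
\item Since the Galois group is $(\Z/2)^2$, every residue field has degree at most $2$ over $\Fp$.
\item When the degree is $2$ and $p$ is unramified, the image of $\omega$ is not in $\Fp$. The differences $\sigma(\omega)-\omega\in\{-2\sqrt2,-2\sqrt3,-2\omega\}$ are units away from $2$ and $3$, because $\omega$ is a unit: $(\sqrt3+\sqrt2)(\sqrt3-\sqrt2)=1$.
\item At $p=3$ the residue field is $\mathbb{F}_9$, and $\bar\omega=\overline{\sqrt2}\notin\mathbb{F}_3$.
\end{itemize}
Hence for every homomorphism $\eta$ from $R$ onto a finite field, $\eta(Z_\omega)=\Fp+\Fp\,\eta(\omega)$ is the whole field. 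This is why the proposition must allow a finite ring rather than a finite field. By contrast, \Cref{Lem:LinearIndep} only needs the two elements $1$ and $\eta_p(\omega)$ to be independent, so a residue field suffices there.
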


In previous work \cite[Lemma 4.8]{FHH}, the authors proved a technical variant of \Cref{Prop:AddRingSep}. This variant
allows us to establish a number of mod $p$ congruences that are key to contradictions needed in the proofs of the main theorems of this paper.

\begin{lem}[{\cite[Lemma 4.8]{FHH}}]
\label[lem]{Lem:LinearIndep}
Let $R$ be a finitely generated ring in a number field $k$.  By fixing a $\Q$ embedding of $k$ into $\C$,
we may view $k \subset \C$.  Let $\omega \in R \setminus \R$. Then there is an infinite collection  $\PrimeSet$ of primes of $\Q$, such that for each prime $p \in \PrimeSet$, there exist a finite field $F_{\pp}$ 
of characteristic $p$ and a ring homomorphism $\eta_p\from R \rightarrow F_{\pp}$, such that $\{ 1, \eta_p(\omega) \}$ is linearly independent over $\Fp$. 

Consequently, $\eta_p$ has the following property. Consider an element $y_*  \in Q_\omega =\{ m + n\omega \mid m, n \in \Q \}$. Express $y_*$ in lowest terms:
\[
y_* = \frac{m_* + n_* \omega}{v_*} \quad 
\]
where $m_*, n_* \in \mathbb{Z}$ and $v_* \in \mathbb{N}$. 
If $\eta_p(y_*) = \eta_p(m + n \omega)$ for some $m, n \in \mathbb{Z}$, then $v_* m \equiv m_* \smod p$ and $v_* n \equiv n_* \smod p$.
\end{lem}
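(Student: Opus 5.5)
We need to prove Lemma 4.8 from FHH: given $\omega \in R\setminus\R$, there are infinitely many primes $p$ and maps $\eta_p\from R\to F_\pp$ with $\{1,\eta_p(\omega)\}$ linearly independent over $\Fp$. We then derive the congruence consequence.

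\emph{Reduction to a condition on $\omega$ modulo $\pp$.} Write $R \subset \mathcal{O}_k[1/N]$ for some integer $N$; this is possible because $R$ is finitely generated, so only finitely many denominators appear. For any prime ideal $\pp$ of $\ok$ not dividing $N$, reduction gives a ring homomorphism $\eta\from R \to \ok/\pp = F_\pp$, a finite field of characteristic $p$. The pair $\{1,\eta(\omega)\}$ is linearly dependent over $\Fp$ exactly when $\eta(\omega)\in\Fp$. So we need infinitely many rational primes $p$ having some prime $\pp\mid p$ with $\omega \bmod \pp \notin \Fp$.

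\emph{Finding the primes.} Since $\omega\notin\R$, we have $\omega\notin\Q$, so $L=\Q(\omega)$ has degree $d\ge 2$. Work with the minimal polynomial $f$ of $\omega$; after scaling $\omega$ by an integer (which does not affect $\Fp$-linear independence for $p$ not dividing that integer) we may take $f\in\Z[x]$ monic. I would apply Chebotarev's density theorem, or more elementarily Frobenius density, to the splitting field of $f$. The Galois group acts transitively on the $d\ge 2$ roots, so by Jordan's lemma some element fixes no root. Infinitely many $p$ have Frobenius in that conjugacy class, and for those $p$, $f$ has no root mod $p$. Discard the finitely many $p$ dividing $N$, the discriminant, or the scaling integer. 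For any $\pp\mid p$ in $\ok$, $\eta(\omega)$ is a root of $f$ in $F_\pp$. If $\eta(\omega)$ lay in $\Fp$, then $f$ would have a root in $\Fp$, a contradiction. Hence $\eta_p(\omega)\notin\Fp$, giving the infinite set $\PrimeSet$. This density step is the main obstacle; an alternative is the classical fact that a polynomial of degree $\ge2$ that is irreducible over $\Q$ has no root mod $p$ for infinitely many $p$.

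\emph{The congruences.} Take $p\in\PrimeSet$ with $p\nmid v_*$; we may enlarge the discarded finite set to ensure this whenever needed. Since $v_*y_*=m_*+n_*\omega$, applying $\eta_p$ to $\eta_p(y_*)=\eta_p(m+n\omega)$ gives
\[
(v_*m-m_*)+(v_*n-n_*)\eta_p(\omega)=0 \quad\text{in } F_\pp.
\]
By linear independence of $\{1,\eta_p(\omega)\}$ over $\Fp$, both coefficients vanish mod $p$, which yields the stated congruences.

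One caveat: $\eta_p(y_*)$ only makes sense when $y_*\in R$, or when $v_*$ is invertible in $F_\pp$. Since the statement applies $\eta_p$ to $y_*$, we read it as $y_*\in R$ (or extend $\eta_p$ to $R[1/v_*]$). Either way the computation above goes through unchanged.
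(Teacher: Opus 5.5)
Your proof is correct. The paper gives no argument for this lemma; it is quoted from \cite{FHH}. So there is no in-text proof to compare against. On its own terms, each of your steps goes through:
\begin{itemize}
\item the reduction to the condition $\eta_p(\omega)\notin\Fp$;
\item the Frobenius/Chebotarev step, where excluding the discriminant ensures the Frobenius cycle type controls the roots of $f$ mod $p$;
\item the final linear-algebra computation.
\end{itemize}
Note that you only use $\omega\notin\Q$, not the full hypothesis $\omega\notin\R$.

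Your density step proves more than is asked. By requiring that $f$ have no root mod $p$, you get $\eta(\omega)\notin\Fp$ for \emph{every} $\pp\mid p$, and that is what forces Jordan's lemma. The lemma only needs one $\pp$ per $p$, so it suffices that $p$ not split completely in $\Q(\omega)$. Equivalently, take any $\sigma$ in the Galois group of the Galois closure of $k$ with $\sigma(\omega)\neq\omega$; complex conjugation is a natural choice given $\omega\notin\R$. Chebotarev gives infinitely many $\mathfrak P$ with Frobenius $\sigma$. For these, $\omega^p\equiv\sigma(\omega)\not\equiv\omega\pmod{\mathfrak P}$, except for the finitely many $\mathfrak P$ dividing $\sigma(\omega)-\omega$.

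The restriction $p\nmid v_*$ is unnecessary, and it would make $\PrimeSet$ depend on $y_*$, whereas the statement fixes $\PrimeSet$ first. Your computation multiplies by $v_*$ but never divides by it, so it is valid for every $p\in\PrimeSet$. For $y_*\in R$, which is how the paper uses the lemma, the condition is automatic anyway. If $p\mid v_*$, applying $\eta_p$ to $v_*y_*=m_*+n_*\omega$ gives $m_*+n_*\eta_p(\omega)=0$ in $F_\pp$. Linear independence then forces $p\mid m_*$ and $p\mid n_*$, contradicting lowest terms. Hence the congruences hold for every $p\in\PrimeSet$ and every $y_*\in R\cap Q_\omega$, with the quantifiers in the order the statement gives them.
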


The next two propositions help control the multiplicative order of elements in finite quotients. 

\begin{prop}[{\cite[Corollary 2.5]{H2}}]
\label[prop]{Prop:orderm}
Let $R$ be a finitely generated ring
in a number field $k$, let $\lambda$ be
a non-zero element of $R$
that is not a root of unity, and let
$x_1, x_2, \ \ldots \ , x_j$ be non-zero
elements of $R$.
Then there exists a positive integer $n$
with the following property.  For each integer
$m \geq n$, there exist a finite field $F$
and a ring homomorphism $\eta \from R \rightarrow F$
such that the multiplicative order of $\eta(\lambda)$ is
equal to $m$ and $\eta(x_i) \neq 0$, for each
$1 \leq i \leq j$.
\end{prop}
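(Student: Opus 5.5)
The plan is to reduce this to a primitive prime divisor theorem for the sequence $\lambda^m - 1$ in the number field $k$, and then discard finitely many bad primes.

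\textbf{Step 1: the finite bad set.} Since $R$ is finitely generated, write its generators as fractions over $\ok$. Let $T$ be the finite set of primes $\pp$ of $\ok$ consisting of:
\begin{itemize}
\item primes dividing a denominator of some generator of $R$;
\item primes dividing the numerator or denominator ideal of $\lambda$;
\item primes dividing the numerator ideal of some $x_i$.
\end{itemize}
For every prime $\pp \notin T$ the following hold. First, $R \subset \okp$, so reduction gives a ring homomorphism $\eta_\pp \from R \to \ok/\pp =: F$ onto a finite field. Second, $\eta_\pp(\lambda) \in F^*$. Third, $\eta_\pp(x_i) \neq 0$ for each $i$. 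Fourth, the multiplicative order of $\eta_\pp(\lambda)$ is the least $d \geq 1$ with $\pp \mid (\lambda^d - 1)$, where divisibility is read in the localization $\okp$.

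\textbf{Step 2: primitive divisors.} Call a prime $\pp \notin T$ a \emph{primitive divisor} of $\lambda^m - 1$ if $\pp$ divides $\lambda^m - 1$ but no $\lambda^d - 1$ with $d < m$. By Step 1, any such $\pp$ gives a homomorphism $\eta_\pp$ in which $\eta_\pp(\lambda)$ has order exactly $m$ and every $x_i$ survives. So it suffices to show that $\lambda^m - 1$ has a primitive divisor outside $T$ for all large $m$.

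\begin{itemize}
\item Write $\lambda = \alpha/\beta$ with $\alpha, \beta \in \ok$, so that $\lambda^m - 1 = (\alpha^m - \beta^m)/\beta^m$.
\item Invoke Schinzel's theorem on primitive divisors of $\alpha^m - \beta^m$ in number fields (Postnikova--Schinzel, Schinzel 1974). Since $\lambda$ is not a root of unity, there is $n_0$, depending only on $\lambda$ (or just on $[k:\Q]$ in the coprime case), such that for every $m \geq n_0$ the ideal $\alpha^m - \beta^m$ has a prime divisor not dividing $\alpha^d - \beta^d$ for any $d < m$.
\item To apply the theorem cleanly, first pass to a finite extension where the ideals $(\alpha), (\beta)$ become coprime after removing their common factor. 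Alternatively, use the formulation for algebraic numbers directly, excluding primes dividing $\alpha\beta$.
\end{itemize}

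\textbf{Step 3: avoiding $T$.} A fixed prime $\pp \notin T$ with $\lambda$ a unit at $\pp$ is a primitive divisor of $\lambda^m - 1$ for at most one value of $m$, namely the order of $\lambda$ modulo $\pp$. Primes in $T$ lying above $\lambda$'s numerator or denominator are never primitive divisors. Hence only finitely many $m$ can have all of their primitive divisors inside $T$. Enlarge $n_0$ past these values to obtain the required $n$. For each $m \geq n$, pick a primitive divisor $\pp \notin T$ and take $F = \ok/\pp$ and $\eta = \eta_\pp$.

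\textbf{Main obstacle.} The hard step is Step 2: the existence of primitive prime divisors for all sufficiently large $m$ when $\lambda$ is a non-integral algebraic number. This is a genuinely deep input (a Zsigmondy--Schinzel theorem), and I would cite it rather than reprove it. The remaining difficulty is bookkeeping: checking that divisibility of $\lambda^m - 1$ at $\pp$ in $\okp$ matches the ideal-theoretic primitivity notion used in Schinzel's statement.
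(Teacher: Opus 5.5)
The paper does not prove this proposition; it imports it verbatim from \cite[Corollary 2.5]{H2}, so there is no internal argument to compare against. Your proof is correct.

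\textbf{The checks you flag.}
\begin{enumerate}[(a)]
\item Take a prime $\pp$ with $\nu_\pp(\lambda)=0$ and $R\subset\okp$. Being a primitive divisor of $\lambda^m-1$ at such a $\pp$ says exactly that $\eta_\pp(\lambda)$ has order $m$.
\item A prime at which $\lambda$ is not a unit never divides any $\lambda^d-1$.
\item Each of the finitely many primes of $T$ at which $\lambda$ is a unit is primitive for at most one $m$. So avoiding $T$ costs only finitely many values of $m$.
\end{enumerate}

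\textbf{The passage to a finite extension.} Take $L$ to be, e.g., the Hilbert class field, so that $(\lambda)=\mathfrak a\mathfrak b^{-1}$ becomes $(A)(B)^{-1}$ with $A,B\in\mathcal O_L$ coprime, after absorbing a unit into $A$. This step is harmless. Let $\mathfrak P$ be a primitive divisor of $A^m-B^m$ lying over no prime of $T$. Then $\mathfrak P\nmid AB$, so $\lambda$ has order exactly $m$ modulo $\mathfrak P$. Since $\ok/(\mathfrak P\cap\ok)\hookrightarrow\mathcal O_L/\mathfrak P$, the same holds modulo $\pp=\mathfrak P\cap\ok$. Alternatively, simply take $F=\mathcal O_L/\mathfrak P$.

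\textbf{A minor inaccuracy.} $\eta_\pp$ need not be onto $\ok/\pp$. This does not matter, since only a homomorphism to some finite field is required.

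\textbf{Why a primitive-divisor theorem is needed.} It is worth seeing why a Zsigmondy--Schinzel input is the right tool here, rather than the Tchebotarev--Kummer method the paper uses in \Cref{Prop:Algebraic}. For fixed $m$, every prime at which $\lambda$ has order \emph{exactly} $m$ divides $\lambda^m-1$. So there are only finitely many such primes, and no density argument can produce them. The Tchebotarev argument in Case (1) of \Cref{Prop:Algebraic} accordingly yields only $m \mid o(\eta(\lambda))$. The exact prime order supplied by this proposition is what the paper needs in Case (3) of \Cref{Prop:Algebraic} and in \Cref{Prop:trace lemma}.
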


\begin{prop}[{\cite[Corollary 2.8]{H2}}]
\label[prop]{Prop:notpower}
Let $R$ be a finitely generated
ring in a number field $k$.
Let $\lambda$ and
$\omega$ be non-zero elements of $R$ such that
$\omega$ is not a multiplicative
power of $\lambda$. Then there exist a finite
ring $S$ and a 
homomorphism
$\eta \from R \rightarrow S$ such that
$\eta(\omega)$ is not a multiplicative power
of $\eta(\lambda)$.
\end{prop}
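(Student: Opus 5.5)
This is cited from \cite[Corollary 2.8]{H2}, so the task is to reconstruct a proof of \Cref{Prop:notpower}. The plan is to reduce to finite fields and compare multiplicative orders.

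First I would dispose of degenerate cases. If $\lambda$ is a root of unity of order $r$, then its powers form a finite set $\{1,\lambda,\dots,\lambda^{r-1}\}$, and $\omega$ differs from each element of this set. The differences $\omega-\lambda^i$ are finitely many nonzero elements of $R$. A finitely generated ring in a number field is residually finite by maximal ideals with finite residue fields: localize at the finitely many denominators and reduce modulo a prime of $\ok$ avoiding them. So there is a homomorphism $\eta$ to a finite field with all $\eta(\omega-\lambda^i)\neq 0$ and $\eta(\omega)\neq 0$. Then $\eta(\omega)$ is not a power of $\eta(\lambda)$, because every power of $\eta(\lambda)$ equals some $\eta(\lambda^i)$.

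Now assume $\lambda$ is not a root of unity. I would split on whether the multiplicative subgroup $\langle\lambda,\omega\rangle\subset k^*$ has rank two or rank one.

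\emph{Rank one.} Here $\omega^a=\lambda^b$ for some coprime $a,b$ with $a\ge 1$, possibly up to a root of unity $\zeta$: $\omega^a=\zeta\lambda^b$. If $a=1$ and $\zeta=1$, then $\omega$ is a power of $\lambda$, which is excluded. So either $a>1$ or $\zeta\ne1$. I would use \Cref{Prop:orderm} to choose $\eta$ so that $\eta(\lambda)$ has a prescribed order $m$. Suppose $\eta(\omega)=\eta(\lambda)^c$. Then $\eta(\lambda)^{ac}=\eta(\zeta)\eta(\lambda)^b$. Choosing $m$ divisible by a large power of a prime dividing $a$, and also requiring $\eta(\zeta)\ne 1$ (a nonzero condition, so it can be included among the $x_i$ in \Cref{Prop:orderm}), should produce a contradiction in the cyclic group $\langle \eta(\lambda)\rangle$. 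Concretely, $\eta(\zeta)$ must lie in $\langle\eta(\lambda)\rangle$, and $ac\equiv b \pmod{m}$ must be solvable. When $\gcd(a,m)>1$ and $\gcd(a,b)=1$, this congruence fails as long as $\eta(\zeta)$ is trivial. So I expect the $\zeta$ bookkeeping to require some care: it may mean replacing $\lambda$ by a suitable expression, or choosing $m$ coprime to the order of $\zeta$ so that $\eta(\zeta)\notin\langle\eta(\lambda)\rangle$.

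\emph{Rank two.} This is the main case, and I expect it to be the main obstacle. The strategy is to find a prime $\pp$ at which $\eta(\lambda)$ has small order $m$, while $\eta(\omega)$ has order not dividing $m$. I would take $m=\ell^N$ and a prime $\pp$ splitting completely in $k(\mu_{\ell^{N+1}},\lambda^{1/\ell^{N}},\omega^{1/\ell^{N+1}})$ but not in the Kummer extension where $\lambda^{1/\ell^{N+1}}$ lives. Chebotarev density gives infinitely many such $\pp$, since rank two makes these Kummer extensions independent. At such a $\pp$, the element $\eta(\lambda)$ is an $\ell^N$-th power but not an $\ell^{N+1}$-th power, while $\eta(\omega)$ is an $\ell^{N+1}$-th power. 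In the cyclic group $F^*$, if $\eta(\omega)=\eta(\lambda)^c$, then the $\ell$-adic valuations of the indices are incompatible unless $\ell\mid c$, and iterating gives a contradiction. The cleaner route is to argue entirely within the $\ell$-part of $F^*$: $\eta(\omega)\in\langle\eta(\lambda)\rangle$ forces the $\ell$-part of the index of $\eta(\omega)$ to be at most that of $\eta(\lambda)$, unless the $\ell$-component of $\eta(\omega)$ vanishes. To rule that out, I would also require $\omega$ not to be an $\ell^{M}$-th power modulo $\pp$ for a suitably larger $M$, chosen with the roles arranged appropriately. Getting these power-residue conditions consistent, together with the independence of the Kummer extensions (which needs the rank-two hypothesis, and care with roots of unity in $k$), is the delicate step.
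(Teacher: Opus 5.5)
\paragraph{The rank-two case has a genuine gap.} Your degenerate case and your rank-one case are sound, and the rank-one case matches Cases (2) and (3) of the paper's proof of \Cref{Prop:Algebraic}. (That proposition is the paper's strengthening of this statement; the paper itself only cites this one.) When $a>1$, the root of unity is harmless: take $m$ divisible by $ad$, where $d$ is the order of $\zeta$. Then $\eta(\zeta)^d=1$ gives $\eta(\lambda)^{d(ac-b)}=1$, so $a\mid b$, which is a contradiction. Do not impose $\eta(\zeta)\neq 1$ there. When $a=1$, your condition $\gcd(m,d)=1$ together with $\eta(\zeta)\neq1$ works.

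The gap is in the rank-two case, which is the heart of the statement, and your configuration there is backwards. Suppose $\eta(\lambda)$ is an exact $\ell^N$-th power and $\eta(\omega)$ is an $\ell^{N+1}$-th power in the cyclic group $F^*$. Nothing then prevents $\eta(\omega)=\eta(\lambda)^{\ell c'}$. Every element of $\langle\eta(\lambda)\rangle$ is at least as $\ell$-divisible as $\eta(\lambda)$. So making $\eta(\omega)$ more divisible can never obstruct membership, and ``iterating'' produces no contradiction. Your ``cleaner route'' states the inequality in the wrong direction for the same reason. The mechanism that works, used in Case (1) of the proof of \Cref{Prop:Algebraic}, is the reverse and needs only one level. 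Find a prime $q$ and a prime $\pp$ such that $\eta(\lambda)\in F^{*q}$ but $\eta(\omega)\notin F^{*q}$. Since $F^{*q}$ is a subgroup, it contains every power of $\eta(\lambda)$, and you are done. With $K=k(\zeta_q)$, this amounts to a Frobenius that is trivial on $K(\lambda^{1/q})$ and nontrivial on $K(\omega^{1/q})$. Hensel's lemma then transfers the power-residue behaviour to $F$.

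\paragraph{You also need a suitable $q$.} Chebotarev gives such $\pp$ only if $K(\omega^{1/q})\not\subseteq K(\lambda^{1/q})$. By Kummer theory this is equivalent to $\omega\notin\langle\lambda\rangle K^{*q}$. Rank two does not give this for an arbitrary prime. For example, $\lambda=2$ and $\omega=2\cdot 3^q$ generate a copy of $\Z\oplus\Z$, yet $\eta(\lambda)\in F^{*q}$ if and only if $\eta(\omega)\in F^{*q}$ at every relevant $\pp$. So ``rank two makes these Kummer extensions independent'' is false as stated, and $q$ must be chosen. The paper does this with the $S$-unit theorem. Let $S$ consist of the primes dividing $\lambda$ and $\omega$, together with the infinite primes, and choose $q$ not dividing the order of the torsion subgroup of $U_S/\langle\lambda,\omega\rangle$.

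Now suppose $\omega=\lambda^c v^q$ with $v\in K$. Then $v$ is an $S$-unit, and taking norms to $k$ gives $\omega^{d}=\lambda^{cd}N(v)^q$ with $d=[K:k]$ prime to $q$. The torsion condition forces $N(v)\in\langle\lambda,\omega\rangle$. Multiplicative independence of $\lambda$ and $\omega$ then forces $q\mid d$, which is a contradiction. Without this choice of $q$ and the norm descent, or an equivalent argument, your rank-two case does not go through.
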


We will prove a result (\Cref{Prop:Algebraic}) that strengthens and combines \Cref{Prop:orderm} and \Cref{Prop:notpower}.  The proof of this result is modeled on the proofs
of  \cite[Proposition 5]{H1} and  \cite[Theorem 2.7]{H2}.  We will need standard terminology and results of algebraic number theory.  For
references, see Janusz \cite{Janusz} or Cassels and Fr{\"o}hlich \cite{cassels1967algebraic}. Here, we summarize some of the information and set some notation for later use:
\smallskip

\begin{enumerate}[\:\:$(1)$]
\item For a number field $K$, let $\oK$ denote the ring of integers of $K$.
\item If $\pp$ is a non-zero prime ideal of $\oK$, we let $K_{\pp}$ denote  the $\pp$-adic completion of $K$ and $\oKp$ 
the ring of integers of $K_{\pp}$.   The ring $\oKp$ has a unique maximal ideal.  
The quotient of $\oKp$ by this maximal ideal is a finite field called the \emph{residue class field} of $\oKp$.  The quotient map
is called the \emph{residue class field map} with respect to $\pp$. If $R$ is a finitely generated ring in a number field $K$, then $R \subset \oKp$ for all but (at most) finitely
many primes $\pp$ of $\oK$.  By restricting the residue class field map to $R$, we obtain a finite quotient of $R$.
\item Given a non-zero prime ideal $\pp$ of $\oK$, let
$\nu_{\pp}$ denote the exponential $\pp$-adic
valuation of $K$.  We say $\pp$ \emph{divides} a non-zero element $x \in K$
if $\nu_{\pp}(x) \neq 0$.   
\item We have:
\begin{enumerate}[(i)]
\item $x \in \oKp$ if and only if $\nu_{\pp}(x) \geq 0$,
\item $x$ is contained in the unique maximal ideal
of $\oKp$ if and only if $\nu_{\pp}(x) > 0$, and
\item $x$ is a unit in $\oKp$ if and only if $\nu_{\pp}(x)  = 0$.
\end{enumerate}
\item  For any field $K$, we denote the group of $n$-th powers of
non-zero elements of $K$ by $K^n$.
\item  Let $K$ be a field, $q$ be a prime,
and $0 \neq \alpha \in K$. Then
the polynomial $x^q - \alpha$ is irreducible in $K[x]$
if and only if $\alpha \notin K^q$.
\item  We denote a primitive $r$-th root of unity
by $\zeta_r$.
\item Given a number field $K$ and a 
set of (rational) primes $P = \{ p_1, \ldots, p_s \}$,
let $K(\mu_{p_1^{\infty}}, \ldots , 
\mu_{p_s^{\infty}})$ denote the extension of
$K$ obtained by adjoining $\zeta_{p^n}$ to $K$ for
all $n \in \mathbb N$ and $p \in P$.
\item  Let $K$ be a number field and $q$ be a prime.
Suppose that $\zeta_4 \in K$ if $q = 2$, and
$\zeta_q \in K$ if $q$ is an odd prime.  Then
for any positive integer $t$, the field
$K(\zeta_{q^t})$ is a cyclic extension of
degree $q^r$ over $K$, for some $0 \leq r \leq t-1$.  
\item  Let $m = p_1^{d_1} \ \ldots \
p_s^{d_s} \in \mathbb N$ and 
$\{ \zeta_4, \zeta_{p_1}, \ \ldots\  ,  \zeta_{p_s} \} \subset K$.
Since $K(\zeta_m) = K(\zeta_{p_1^{d_1}}) \ \ldots \ 
K(\zeta_{p_s^{d_s}}),$  fact (9) implies 
that $K(\zeta_m)/K$ is cyclic of degree dividing $m$.
\end{enumerate}

\medskip

The following proposition helps describe the behavior of finitely generated multiplicative subgroups of a number field, especially regarding their images in finite quotients.  

\begin{prop}
\label[prop]{Prop:Algebraic}
Let $m \in \mathbb{N}$. Let $R$ be a finitely generated ring in a number field $k$.  
Let $\lambda$ be a non-zero element of $R$ that is not a root of unity. Let $\{ \omega_i \mid i \in I \}$ 
be a finite collection of non-zero elements in $R$ such that $\omega_i$ is not a 
multiplicative power of $\lambda$, for each $i \in I$.
Let $G_i = \langle \lambda, \omega_i \rangle$ be the (multiplicative) subgroup
of $k^{\ast} = k \setminus \{ 0 \}$ generated by $\lambda$ and $\omega_i$. 
Suppose that one of the following conditions is satisfied. 

\begin{enumerate}[\:\:$(1)$]
\item\label{Itm:Z+Z} For each $i \in I$, we have $G_i \cong \mathbb{Z} \oplus \mathbb{Z}$.
\item\label{Itm:NonDivides} For each $i \in I$, we have $\omega_i^{s_i} = \lambda^{t_i}$, for some $s_i \in \mathbb{N}$ and  $t_i \in \mathbb{Z}$
such that $s_i \nmid t_i$
\item\label{Itm:Divides} For each $i \in I$, we have $\omega_i^{s_i} = \lambda^{t_i}$, for some $s_i \in \mathbb{N}$ and  $t_i \in \mathbb{Z}$
such that $s_i \mid t_i$.
\end{enumerate}
Then there exist a finite field $F$ and a ring homomorphism $\eta\from R \rightarrow F$ such that 
$\eta(\omega_i)$ is not a multiplicative power of $\eta(\lambda)$, for each $i \in I$.

Moreover, if \eqref{Itm:Z+Z} or \eqref{Itm:NonDivides} holds, we may choose $\eta$ such that the multiplicative order of $\eta(\lambda)$ is divisible by $m$. 
If \eqref{Itm:Divides} holds, we may choose $\eta$ such that the multiplicative order of $\eta(\lambda)$ is equal to $p$, where $p$ is a sufficiently large prime. 
\end{prop}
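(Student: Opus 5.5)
The proof splits according to which of the three hypotheses holds. Throughout, I use the elementary fact that $F^\ast$ is cyclic. Hence for $x,y \in F^\ast$, $x$ is a power of $y$ if and only if the multiplicative order of $x$ divides that of $y$. So in each case it suffices to make the order of $\eta(\omega_i)$ fail to divide the order of $\eta(\lambda)$, typically by controlling the $q$--primary part of both orders for a suitable rational prime $q$.

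\emph{Case \eqref{Itm:Divides}} is the easiest, and I would do it first. Here $(\omega_i\lambda^{-t_i/s_i})^{s_i}=1$, so $\omega_i=\zeta_i\lambda^{t_i/s_i}$ for a root of unity $\zeta_i \in k$. Since $\omega_i$ is not a power of $\lambda$, we have $\zeta_i\neq 1$. Let $N$ be the product of the orders of the $\zeta_i$. Apply \Cref{Prop:orderm} to $\lambda$, with the finite list of nonzero elements $x_i=\zeta_i-1$ (after adjoining the $\zeta_i$ to $R$ if necessary, which keeps $R$ finitely generated). This gives $\eta$ with $\eta(\zeta_i)\ne 1$ and with the order of $\eta(\lambda)$ equal to a prime $p>N$. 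If $\eta(\omega_i)\in\langle\eta(\lambda)\rangle$, then $\eta(\zeta_i)$ lies in a group of order $p$. But $\eta(\zeta_i)$ has order dividing $N$, which is coprime to $p$, so $\eta(\zeta_i)=1$, a contradiction.

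\emph{Cases \eqref{Itm:Z+Z} and \eqref{Itm:NonDivides}} I would handle with Kummer theory and Chebotarev, following \cite[Proposition 5]{H1} and \cite[Theorem 2.7]{H2}.
\begin{enumerate}[\:\:(a)]
\item Enlarge $k$ to $K=k(\zeta_4,\zeta_r : r\mid m \text{ or } r=q)$, where $q$ is a prime chosen below, so that fact (10) applies.
\item For each prime power $r^{d}$ exactly dividing $m$, and for the auxiliary prime $q$, pass to $L=K(\zeta_{r^{T}}, \zeta_{q^T})$ for large $T$. Choose a prime $\pp$ of $\mathcal O_L$ whose residue field $F$ contains these roots of unity, and which has prescribed Frobenius behaviour in the Kummer extensions $L(\lambda^{1/r^{a}})$ and $L(\lambda^{1/q^{a}},\omega_i^{1/q^{b}})$.
\item Read off orders. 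If $\lambda$ is not an $r$--th power in $F$ but $\zeta_{r^T}\in F$, then $r^T$ divides the order of $\eta(\lambda)$; arranging this for every $r\mid m$ gives $m \mid \operatorname{ord}\eta(\lambda)$. For $q$, arrange that $\lambda$ is a $q^{a}$--th power in $F$, which caps the $q$--part of $\operatorname{ord}\eta(\lambda)$ at $q^{T-a}$. Also arrange that each $\omega_i$ is not a $q^{b}$--th power in $F$ with $b\le a$, which forces the $q$--part of $\operatorname{ord}\eta(\omega_i)$ to be at least $q^{T-b+1}>q^{T-a}$.
\end{enumerate}
In case \eqref{Itm:Z+Z}, I would take $q$ a large prime not dividing $m$ and take $a=b=1$. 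In case \eqref{Itm:NonDivides}, I would take $q$ to be a prime with $q^{e}\,\|\, s_i$ and $q^{e}\nmid t_i$. I would then pick $a,b$ from the valuations: roughly, $\omega_i$ behaves like $\lambda^{t_i/s_i}$ times a root of unity, and the $q$--adic valuation of $t_i/s_i$ is negative. This has to be done with care when different $i$ want different $q$; if necessary I would handle the primes $q$ one at a time within the same Chebotarev condition, since they are independent.

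The \textbf{main obstacle} is step (b): producing a single Frobenius satisfying all the conditions simultaneously. Concretely, we need an element of the Galois group of the compositum that is trivial on $L(\lambda^{1/q})$ but nontrivial on each $L(\lambda^{1/q},\omega_i^{1/q})$, together with the conditions at the primes $r\mid m$. For case \eqref{Itm:Z+Z}, I would show via Kummer theory that for all but finitely many $q$, the classes of $\lambda,\omega_i$ in $L^\ast/L^{\ast q}$ satisfy $\omega_i\notin\langle\lambda\rangle L^{\ast q}$. This uses that $G_i\cong\Z^2$ and that only finitely many primes $q$ can make $\lambda^x\omega_i^y$ a $q$--th power with $(x,y)\not\equiv 0$, since $L^\ast$ modulo torsion is free on finitely many relevant generators. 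The relevant Galois group is then an $\mathbb F_q$--vector space, and we need a vector avoiding finitely many proper hyperplanes while lying in the hyperplane $\lambda^\perp$. This is possible once $q$ exceeds the number of $i$. Chebotarev then gives infinitely many such $\pp$, and we discard the finitely many where $R\not\subset\mathcal O_{L_\pp}$ or some $\lambda,\omega_i$ fails to be a unit. Linear disjointness of the $r$--Kummer extensions from the $q$--Kummer extensions (coprime degrees) makes the conditions at different primes compatible.
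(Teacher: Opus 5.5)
Your Case (3) is the paper's argument. In Case (1), your hyperplane-avoidance way of producing the Frobenius (with $q>|I|$) is correct. It is in fact more robust than the paper's assertion that $\mathrm{Gal}(L_\lambda L_1\cdots L_n/K)$ is the full product, which is too strong in general (e.g.\ when $\omega_2=\lambda\omega_1$ and $\lambda\notin K^{*q}$).

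\textbf{The gap: securing $m\mid\operatorname{ord}\eta(\lambda)$.}
\begin{itemize}
\item Step (c) asks for a Frobenius acting nontrivially on $\lambda^{1/r}$, so that $\eta(\lambda)\notin F^{*r}$. This is impossible whenever $\lambda\in L^{*r}$.
\item That already happens if $\lambda=\mu^r$ with $\mu\in k$. Adjoining $\zeta_{r^T}$ can also create new $r$-th roots: for instance $\sqrt2=\zeta_8+\zeta_8^{-1}\in\Q(\zeta_8)$.
\item Passing to an $r^c$-th root $\beta\in L\setminus L^{*r}$ of $\lambda$ only gives $v_r(\operatorname{ord}\eta(\lambda))\ge T-c$. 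But $c$ depends on the field $L=K(\zeta_{r^T})$ you built, so ``take $T$ large'' proves nothing.
\item What you need is that the $r$-power divisibility of $\lambda$ stays bounded along the whole tower $k(\zeta_q)(\mu_{r^\infty})$. This uniform bound is exactly where ``$\lambda$ is not a root of unity'' must enter, and your proposal never supplies it.
\end{itemize}
The paper imports this bound from \cite[Lemma 1]{H1}. That lemma gives an $\alpha$ with $\alpha^{r'}=\lambda$, where $r'$ is a product of primes dividing $m$, such that $\alpha$ is not a $p$-th power in $k(\zeta_q)(\mu_{p_1^\infty},\ldots,\mu_{p_s^\infty})$ for any $p\mid m$. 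Chebotarev in the extensions $K(\alpha^{1/p})$ over $K=k(\alpha,\zeta_q,\zeta_{mr'})$ then forces $mr'\mid\operatorname{ord}\eta(\alpha)$, hence $m\mid\operatorname{ord}\eta(\lambda)$.

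\textbf{A smaller point.} Your ``all but finitely many $q$'' claim must be proved in a field independent of $q$. The paper takes norms down to $k$, with $q\nmid[K:k]$, and uses the torsion of $U_S/G_i$. Your $L$ contains $\zeta_{q^T}$, so it moves with $q$ and $q\mid[L:k]$. With $a=b=1$ you do not need $\zeta_{q^T}$ at all: $\eta(\lambda)\in F^{*q}$ and $\eta(\omega_i)\notin F^{*q}$ already suffice, because $F^{*q}$ is a subgroup.

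\textbf{Case (2).} This case needs no Kummer theory. Your sketch there (``roughly'', ``with care'') is not yet a proof, and it would inherit the divisibility problem above. Instead, let $s$ be the product of the distinct $s_i$, and use \Cref{Prop:orderm}, which already contains the order control, to get $\eta$ with $ms\mid\operatorname{ord}\eta(\lambda)$. If $\eta(\omega_i)=\eta(\lambda)^n$, then $\eta(\lambda)^{ns_i-t_i}=1$. Hence $s_i\mid ns_i-t_i$, which forces $s_i\mid t_i$, a contradiction.
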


\begin{proof}
\noindent
\emph{Case \ref{Itm:Z+Z}:} For each $i \in I$, $G_i \cong \mathbb{Z} \oplus \mathbb{Z}$. 

\medskip

Let $P = \{ p_1, p_2, \ \ldots \ , p_s \}$ be the set of prime divisors of $m$. 
By expanding $k$, if necessary, we may assume that $\{ \zeta_4, \zeta_p \mid p \in P \} \subset k$. 
Note that the infinite multiplicative order of $\omega_i$ assumed in this case implies that $\omega_i$ is not a root of unity. Therefore, by a standard argument, $\omega_i \in
k^q$ for at most finitely many primes $q \in \mathbb{Z}$. (See  \cite[proof of Case 2 of Theorem 2.7]{H2} for details.)
Let $S$ be the finite set of prime ideals of $\ok$
dividing an element of $T =  \{ \lambda, \omega_i \mid i \in I \}$, 
together with the infinite primes of $k$.  
If $\pp \notin S$, the elements of $T$ are units in $\okp$.
Let $$U_S = \{ x \in k^{\ast} \mid \nu_{\pp}(x) = 0, \
\forall \pp \notin S \}$$
denote the set of $S$-units of $k^{\ast}$.
By the $S$-Unit Theorem \cite[Theorem 8.2]{Janusz},
$U_S$ is a finitely generated
abelian group.  Let $o_i$ be the order of the torsion
subgroup of $U_S / G_i$.  Choose an odd prime
$q >  m \in \mathbb{Z}$ such that $q \nmid o_i$ and
$\omega_i \notin k^q$, for all $i \in I$.
By  \cite[Lemma 1]{H1}, there exists an element 
$\alpha  \in k(\zeta_q)(\mu_{p_1^{\infty}},\ldots, 
\mu_{p_s^{\infty}})$ such that:
(\emph{i}) $\alpha = \lambda^r$, where either
$r = 1$ or $r$ is a product of primes in $P$, and
(\emph{ii}) $\alpha \notin (k(\zeta_q)(\mu_{p_1^{\infty}},\ldots, 
\mu_{p_s^{\infty}}))^p$, for any $p \in P$.
Set $u = mr$ and let $K = k(\alpha, \zeta_q, \zeta_u)$. 
We will build two field extensions of $K$, the first to guarantee that the image of $\omega_i$ is not a multiplicative power of the image of $\lambda$, and
the second to control the multiplicative order of the image of $\lambda$. We will then form the compositum of the two extensions and apply the Tchebotarev Density 
Theorem to find a residue map with the desired properties. 

We begin with the first field extension. 
Let $d$ be the degree of $K$ over $k$ and let
$N \from K \rightarrow k$ denote the norm map.
Since $k \subset k(\zeta_q) \subset K \subset k(\zeta_q)(\mu_{p_1^{\infty}}, \cdots, \mu_{p_s^{\infty}})$ and
$\{ \zeta_{p_1}, \zeta_{p_2}, \ \ldots \ , \zeta_{p_s} \} \subset k$, the degree $d$ is a product of primes in $P$ times
the degree of $k(\zeta_q)$ over $k$. Since $[k(\zeta_q): k] < q$ and $q \notin P$, we have $q \nmid d$. 
By construction, $\omega_i \notin k^q$. 
We show that $\omega_i $ remains a non-$q$-th power in $K$.
Suppose, to the contrary, that $\omega_i = z^q$ for some $z \in K$.
Then $\omega_i^d = N(\omega_i) = N(z)^q$. Since gcd$(q, d) = 1$, there exist integers $x$ and $y$ such that 
$dx + qy = 1$. This implies that $\omega_i = \omega_i^{dx + qy} = \omega_i^{dx} \omega_i^{qy} = N(z)^{qx} \omega_i^{qy} \in k^q$,
a contradiction. 
Let $L_i$ be the splitting field of $x^q - \omega_i$ 
over $K$.  Since $\omega_i \notin K^q$, the polynomial
$x^q - \omega_i$ is irreducible over $K$.
Hence $L_i$ is a cyclic extension of $K$ of degree $q$.
Let $L_{\lambda}$ be the splitting field of $x^q - \lambda$ over $K$.
Then either $L_{\lambda} = K$, or $L_{\lambda}$ is a cyclic extension of $K$ of degree $q$.
Suppose that $L_{\lambda} = L_i$ for some $i \in I$. Then by
the classification of Kummer $q$-extensions  \cite[Chapter 3, Lemma 3]{cassels1967algebraic}, 
there exist an element $v \in K$ and an integer $c$,
relatively prime to $q$, such that
$\omega_i = \lambda^c v^q$.
Thus $\omega_i^d = N(\omega_i) =
N(\lambda^c v^q) = \lambda^{cd}
N(v)^q$, and so $N(v)^q\in G_i$.
If $N(v) \in G_i$, then $N(v)
= \lambda^a \omega_i^b$, for some $a, b \in \mathbb{Z}$.  Hence
$\omega_i^{d - bq} =
\lambda^{cd + aq}$.  Since $G_i \cong \mathbb{Z} \oplus \mathbb{Z}$, the elements $\lambda$ and $\omega_i$
are multiplicatively independent. 
It follows that $\omega_i^{d - bq} = 1$.  Therefore, $d = bq$, a contradiction.
We conclude that $N(v)$ is an element of order $q$ in $U_S / G_i$.
But this contradicts the fact that $q \nmid o_i$.  Therefore,
$L_{\lambda} \neq L_i$, for each $i \in I$. Since both extensions have prime degree $q$, it follows that 
$L_{\lambda} \cap L_i = K$.  We now combine the Kummer extensions constructed above into a single Galois extension.
The fields $L_i$  need not be distinct. After reindexing, 
we may assume that $\{ L_1, L_2, \ \ldots \ , L_n \}$ are the distinct members of 
the collection $\{ L_i \mid i \in I \}$.
Then the compositum $L = L_{\lambda} L_1 L_2 \ \ldots  \ L_n$ is a 
Galois extension of $K$ of degree $q^n$ or $q^{n + 1}$, and the map 
\begin{align*}
\mathrm{Gal}(L/K) &\rightarrow
\mathrm{Gal}(L_{\lambda}/K) \times \mathrm{Gal}(L_1/K) \times \ \ldots \ \times \mathrm{Gal}(L_n/K) \\
\gamma & \mapsto (\gamma |_{L_{\lambda}}, \gamma |_{L_1}, \ \ldots \  , \gamma |_{L_n})
\end{align*}
 is an isomorphism.
Let $\sigma_{\lambda}$ be the identity element of Gal$(L_{\lambda}/K)$, let
$\sigma_i$ be a generator of Gal$(L_i/K)$,
and define $\sigma = (\sigma_{\lambda}, \sigma_1, \sigma_2, \ \ldots \  , \sigma_n) \in \mathrm{Gal}(L/K)$.

We build a second field extension of $K$ to control the multiplicative order of the image of $\lambda$.
By construction, $\alpha \notin K^p$, $\forall p \in P$. Therefore,
the polynomials $\{ x^p - \alpha \mid p \in P \}$ are irreducible in $K[x]$. 
Let $M$ be the compositum of the splitting fields of $\{ x^p - \alpha \mid p \in P \}$ over $K$. 
Since $\{ \zeta_p \mid p \in P \} \subset K$, $M$ is a cyclic extension of $K$ of degree $p_1 p_2 \ \ldots \ p_s$.
Let $\tau$ be a generator of Gal$(M/K)$.

Since $[L:K] = q^n$ or $q^{n + 1}$, $[M:K] = p_1 p_2 \ \ldots \ p_s$, and $q \notin P$, the degrees
$[L:K]$ and $[M:K]$ are relatively prime.  Therefore, $L \cap M = K$.  It follows that the compositum $LM$ is a Galois extension of $K$,
and the map 
\begin{align*}
\mathrm{Gal}(LM/K) &\rightarrow \mathrm{Gal}(L/K) \times \mathrm{Gal}(M/K) \\
\gamma &\mapsto (\gamma |_L, \gamma |_M) 
\end{align*}
is an isomorphism.
Consider the element $$\rho = (\sigma, \tau) = ((\sigma_{\lambda}, \sigma_1, \sigma_2, \ \ldots, \ \sigma_n), \tau) \in \mathrm{Gal}(LM/K).$$
By the Tchebotarev Density Theorem, there
exist infinitely many primes $\pp$ of $K$ with
unramified extension $\mathfrak P$ in $LM$ whose Frobenius
automorphism is $\rho$.  Choose such a prime $\pp$ satisfying
(\emph{i}) $R \subset \mathcal{O}_{K_{\pp}}$, (\emph{ii}) $\ok \cap \pp$ lies outside of $S$, and
(\emph{iii}) the characteristic of the residue class field of
$\mathcal{O}_{K_{\pp}}$ is relatively prime to $qu$.
Since $\rho$ is the Frobenius automorphism of $LM/K$
with respect to $\mathfrak P / \mathfrak p$,
Gal$((LM)_{\mathfrak P} / K_{\mathfrak p})
= \langle \rho^{\prime} \rangle$ where
$\rho^{\prime} = \rho$ on $LM$.
Let $F$ denote the residue class field of
$\mathcal{O}_{K_{\pp}}$,  and let
$\eta \from R \hookrightarrow \mathcal{O}_{K_{\pp}} \rightarrow F$
denote the residue class field map restricted to $R$.

By our choice of $\pp$, the elements $\lambda$, $\omega_i$ are units
in $\mathcal{O}_{K_{\pp}}$. Therefore, $\eta(\lambda) \neq 0$ and $\eta(\omega_i) \neq 0$.
We claim that $\eta(\omega_i)$ is not a multiplicative power of $\eta(\lambda)$.  The key point is that
$\eta(\omega_i) \notin F^q$, whereas $\eta(\lambda) \in F^q$.
To see this, let $\omega_i^{1/q}$ be a root of $x^q - \omega_i$.
Since $\omega_i^{1/q} \in L_i \setminus K$, we have
$\rho^{\prime}(\omega_i^{1/q}) =
\rho(\omega_i^{1/q}) = \sigma_i(\omega_i^{1/q}) \neq \omega_i^{1/q}$.
Therefore, $\omega_i^{1/q} \notin K_{\mathfrak p}$.
As $q$ is prime and $K_{\mathfrak p}$ contains the $q$-th roots of unity, it follows that $x^q - \omega_i$
is irreducible over $K_{\mathfrak p}$. 
Since $\eta(\omega_i) \neq 0$ and the characteristic of $F$ is not equal to $q$,
the polynomial $x^q - \eta(\omega_i)$ has no multiple roots
in $F$.  Hensel's lemma \cite[Proposition 3.5]{Janusz} 
then implies that $x^q - \eta(\omega_i)$ is irreducible over $F$,
so $\eta(\omega_i) \notin F^q$.  We next show that $\eta(\lambda) \in F^q$.
Since $\rho^{\prime}(\lambda^{1/q}) =
\rho(\lambda^{1/q}) = \sigma_{\lambda}(\lambda^{1/q})
= \lambda^{1/q}$, we have $\lambda^{1/q} \in K_{\pp}$.
Since $\lambda \in \mathcal{O}_{K_{\pp}}$, the element $\lambda^{1/q} $ is integral over
$\mathcal{O}_{K_{\pp}}$, and hence belongs to $\mathcal{O}_{K_{\pp}}$.
Thus $\lambda \in {\mathcal{O}_{K_{\pp}}}^q$, and so 
$\eta(\lambda) \in F^q$.   Every multiplicative power of $\eta(\lambda)$ 
lies in $F^q$, while $\eta(\omega_i) \notin F^q$. We conclude that $\eta(\omega_i)$
is not a multiplicative power of $\eta(\lambda)$.

It remains to show that the order of $\eta(\lambda)$ is divisible by $m$. 
Let $\alpha^{1/p}$ be a root of $x^p - \alpha$, for some $p \in P$. 
Since $\alpha^{1/p} \notin K$ and Gal($M/K) \cong
\langle \tau \rangle,$ we have $\tau(\alpha^{1/p})
\neq \alpha^{1/p}$. Therefore, $\rho^{\prime}(\alpha^{1/p}) = 
\rho(\alpha^{1/p}) = \tau(\alpha^{1/p}) \neq \alpha^{1/p}$, so $\alpha^{1/p} \notin  K_{\mathfrak p}$.  
As above, we conclude that the polynomials $\{ x^p - \eta(\alpha) \mid p \in P \}$
are irreducible over $F$.   
Since $\zeta_u \in K \subset  K_{\pp}$, the polynomial
$x^u - 1$ splits completely over $F$. As char$(F) \nmid u$,
the roots of $x^u - 1$ in $F$ are distinct.  Therefore,
$F^{\ast} = F - \{ 0 \}$ contains an element of
order $u$, and in particular $u \mid \vert F^{\ast} \vert$.
The irreducibility of $ x^p - \eta(\alpha)$ implies that 
$\eta(\alpha) \notin F^p$ for any prime $p$ dividing $u$.  
Since $F^{\ast}$ is cyclic of order divisible by $u$, and $\eta(\alpha) \notin F^p$ for every prime $p \mid u$,
we obtain $u  \mid o(\eta(\alpha))$. 
Since $u = mr$ and $\lambda = \alpha^r$, the divisibility $u  \mid o(\eta(\alpha))$
implies that $m \mid o(\eta(\lambda))$, as required.

\medskip

\noindent
\emph{Case \ref{Itm:NonDivides}:} For each $i \in I$, $\omega_i^{s_i} = \lambda^{t_i}$, for some $s_i \in \mathbb{N},  t_i \in \mathbb{Z}$
such that $s_i \nmid t_i$.

\medskip

Let $s$ be the product of the distinct elements in $\{ s_i \mid i \in I \}$. 
By \Cref{Prop:orderm}, there exist a finite field $F$ and a ring homomorphism $\eta \from R \rightarrow F$ such that
the multiplicative order of $\eta(\lambda)$ is divisible by $ms$. Suppose $\eta(\omega_i) = \eta(\lambda^n)$, for some $n \in \mathbb{Z}$ and $i \in I$. 
Then $\eta(\lambda^{ns_i}) = \eta(\omega_i^{s_i}) = \eta(\lambda^{t_i})$. It follows that $ns_i \equiv t_i \ (\mathrm{mod} \ s_i).$
We conclude that $s_i \mid t_i$, a contradiction.

\medskip

\noindent
\emph{Case \ref{Itm:Divides}:} For each $i \in I$, $\omega_i^{s_i} = \lambda^{t_i}$, for some $s_i \in \mathbb{N},  t_i \in \mathbb{Z}$
such that $s_i \mid t_i$.

\medskip

Let $s$ be as above. Write $t_i = s_ic_i$, with $c_i \in \mathbb{Z}$. Since $\omega_i^{s_i} = \lambda^{t_i} = \lambda^{s_ic_i}$, 
we have $\omega_i = \lambda^{c_i} \zeta_i$, where $\zeta_i$ is an $s_i$-th root of unity. 
Since $\omega_i$ is not a multiplicative power of $\lambda$, $\zeta_i \neq 1$. 
By \Cref{Prop:orderm}, given a sufficiently large prime $p > s$, there exist a finite field $F$ and a ring homomorphism $\eta \from R \rightarrow F$
such that $\eta(\zeta_i) \neq 1$
and the multiplicative order of $\eta(\lambda)$ is equal to $p$.  
If $\eta(\lambda^n) = \eta(\omega_i) = \eta(\lambda^{c_i} \zeta_i)$, for some $n \in \mathbb{Z}$ and $i \in I$, then 
$\eta(\zeta_i) \in \langle \eta(\lambda) \rangle$. This is a contradiction, since $\eta(\zeta_i)$ is a non-trivial $s_i$-th root of unity and $\langle \eta(\lambda) \rangle$ is a group of prime order $p > s \geq s_i$.
\end{proof}

\begin{rem}
The statement of   \Cref{Prop:Algebraic} can be strengthened in the following way. Let $X$ be a finite set of non-zero elements in $R$. Then all the conclusions of the Proposition can be achieved while also ensuring that $\eta(x)
\neq 0$ for every $x \in X$.

 In the proof of Case \ref{Itm:Z+Z}, 
if we include $X$ in $T$, then every element of $X$ is a unit in $\okp$ for each $\pp \notin S$.
Hence, we may choose $\eta: R \rightarrow F$ such that $\eta(x) \neq 0$ for each $x \in X$. In the proofs of Cases \ref{Itm:NonDivides} and \ref{Itm:Divides},
\Cref{Prop:orderm} allows us to choose $\eta: R \rightarrow F$ such that $\eta(x) \neq 0$ for every $x \in X$.
\end{rem}

\section{Proof of the Main Results}
\label{main}

To prove  \Cref{Theorem:ConjugacyDistinguished}
and \Cref{Theorem:Separability}, we need to separate a coset from either all conjugates of an element $\gamma$, or all conjugates of a 
maximal parabolic subgroup $K$. A natural conjugacy invariant to consider is the trace. This allows us to focus on finitely many values, 
since every element of $K$ has trace $\pm 2$. 

\begin{notation}[Standard form for matrices]\label{main_notation}
We will use the following notation throughout this section until the end of the proof of \Cref{Theorem:ConjugacyDistinguished}. We will consider $M = \H^3/\Gamma$ where $\Gamma$ is an $\SL(2,\C)$ lift of a subgroup of $\PSL(2,\C)$.   By \Cref{Thm:ThurstonLiftConjugate},
we may conjugate $\Gamma$ to lie in $\SL(2,k)$ for
some number field $k$.   Then, fixing a $\mathbb{Q}$ embedding of $k$ into $\mathbb{C}$,
we may view $k \subset \C$. 

If $H = \langle h \rangle$ be a loxodromic subgroup of $\Gamma$ and $g \in \Gamma$, we can replace 
$k$ by a finite degree extension, and then conjugate $\Gamma$ in $\SL(2, k)$
to ensure that
\[
 g =  \begin{pmatrix} a & b \\ c & d \\ \end{pmatrix}
 \quad \mathrm{and} \quad
h =  \begin{pmatrix} \lambda & 0 \\ 0 & {\lambda}^{-1} \\ \end{pmatrix}, 
\]
for some $a, b, c, d, \lambda \in k$, with $\vert \lambda \vert \neq 1$. Then the coset $gH$ becomes
$$
gH = \left\{ \begin{pmatrix} a & b \\ c & d \\ \end{pmatrix}  \begin{pmatrix} {\lambda}^n & 0 \\ 0 & {\lambda}^{-n} \\ \end{pmatrix}
\  \Big\vert \  n \in \Z \right\}  \: = \:  
\left \{  \begin{pmatrix} a {\lambda}^n & b {\lambda}^{-n} \\ c {\lambda}^n & d {\lambda}^{-n}\\ \end{pmatrix}  \  \Big\vert \  n \in \mathbb{Z} \right\}.
$$

If $K$ is a maximal parabolic subgroup of $\Gamma$ with generators $h_1, h_2  \in K$, we may likewise assume after conjugation that
\[
 g = \begin{pmatrix} a & b \\ c & d \\ \end{pmatrix}  , \quad 
h_1 = \pm \begin{pmatrix} 1 & 1 \\ 0 & 1 \\ \end{pmatrix}, \quad \mathrm{and} \quad
h_2 =  \pm \begin{pmatrix} 1 & \omega \\ 0 & 1 \\ \end{pmatrix},
\]
for some $a, b, c, d, \omega \in k$. Then the coset $gK$ becomes
$$
gK \subset \left \{ \pm \begin{pmatrix} a & ax + b \\ c & cx + d \\ \end{pmatrix}  \  \Big\vert \  x \in Z_\omega= \Z\oplus \omega \Z \right\}.
$$
Note, that the choice of sign on $\begin{pmatrix} a & ax + b \\ c & cx + d \\ \end{pmatrix} $ depends on the lift of $\Gamma$ to $\SL(2,k)$. 

We emphasize that the above standard forms can be chosen for either a loxodromic subgroup $H$ or a maximal parabolic subgroup $K$, but not both simultaneously.
\end{notation}

While not deep, the next lemma helps organize the proofs of this section, as we will analyze the cases when there are zero, one, or two collisions between traces of the coset $gH$ and a given trace.  

\begin{lem}\label[lem]{prop:max_collisions}
Let $z \in \C$. {Let $M = {\H}^3 / \Gamma$ be a hyperbolic $3$-manifold of finite volume, let $g \in \Gamma$, and let $H, K$ be abelian subgroups of $\Gamma$.}
\begin{enumerate}[\:\:$(1)$]
\item If $H$ is a loxodromic subgroup, there are at most two elements of $gH$ with trace $z$. 
\item If $K$ is a maximal parabolic subgroup with $g \not\in K$, there are at most two elements of $gK$ with trace $z$.
\end{enumerate}
\end{lem}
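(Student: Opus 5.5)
Both parts follow from the standard forms in \Cref{main_notation}: in each case the trace of an element of the coset is a simple function of the coset parameter, and we count how often that function can equal $z$.

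\emph{Part (1).} Conjugate as in \Cref{main_notation} so that $h = \mathrm{diag}(\lambda,\lambda^{-1})$ with $\vert\lambda\vert\neq 1$. If $H$ is not cyclic, first replace it by the cyclic group generated by a generator of the maximal abelian subgroup containing it. This only enlarges the coset, so a bound for the enlarged coset also bounds the original one; since $\Gamma$ is torsion-free, a maximal loxodromic subgroup is infinite cyclic. The elements of $gH$ are $gh^n$, and
\[
\trace(gh^n) = a\lambda^n + d\lambda^{-n}.
\]
Suppose this equals $z$ for some $n$. Setting $X = \lambda^n$, we get $aX^2 - zX + d = 0$. If $a$ and $d$ are not both zero, this polynomial in $X$ is nonzero (if $a=0$ it reads $-zX+d$ with $d\ne 0$), so it has at most two roots. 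Since $\vert\lambda\vert\neq1$, the map $n\mapsto\lambda^n$ is injective, so at most two values of $n$ occur. Distinct $n$ give distinct elements $gh^n$, which gives the bound.

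The remaining case $a=d=0$ has to be checked separately, because then every element of $gH$ has trace $0$. In this case $g$ has trace $0$, so $g^2=-I$ and the image of $g$ in $\PSL(2,\C)$ has order $2$. This contradicts $\Gamma$ being torsion-free (and $g\ne \pm I$, since $\det g = -bc = 1$). So this case cannot occur, and the count in the previous paragraph covers everything.

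\emph{Part (2).} Use the parabolic standard form in \Cref{main_notation}. Every element of $gK$ has the form $\pm\begin{pmatrix} a & ax+b\\ c & cx+d\end{pmatrix}$ with $x\in Z_\omega$, with trace $\pm(a+cx+d)$. Since $g\notin K$ and $K$ is the full stabilizer in $\Gamma$ of $\infty$, we have $c\neq 0$. Hence $\pm(a+cx+d)=z$ is linear in $x$ with nonzero leading coefficient, so for each choice of sign it has at most one solution $x$. This gives at most two values of $x$.

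It remains to check that each $x$ corresponds to at most one element of $gK$. Here $\omega\notin\R$, since $h_1,h_2$ generate a rank-two lattice, so $x=m+n\omega$ determines $(m,n)$ and hence the element $g h_1^m h_2^n$ up to the sign bookkeeping. The total is therefore at most two.

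The only delicate points are excluding the degenerate case $a=d=0$ in part (1) via torsion-freeness, and keeping track of the $\pm$ signs in part (2). The rest is counting roots of a polynomial of degree at most two.
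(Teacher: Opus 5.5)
Your proposal is correct and follows essentially the same argument as the paper: in part (1) you reduce to counting roots of $aX^2 - zX + d$, using that $n \mapsto \lambda^n$ is injective, and in part (2) you solve the linear equation $\pm(a+d+cx) = z$ using $c \neq 0$ and the linear independence of $1$ and $\omega$. Your extra check that $a = d = 0$ cannot occur (such a $g$ would be a torsion element of order two in $\PSL(2,\C)$) fills a small gap that the paper passes over when it says the quadratic has at most two roots, since that polynomial would otherwise be identically zero when $z = 0$.
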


\begin{proof}
We fix the matrix forms of $g$ and either $H$ or $K$ as in Notation~\ref{main_notation}.

If $H$ is loxodromic, we note that $\trace(gh^n) = z$ if and only if $a {\lambda}^n + d {\lambda}^{-n} = z$.  This is true if and only if ${\lambda}^n$ is a root of 
$f(x) = ax^2 - z x + d$.  As $f(x)$ is quadratic, there are at most two roots. Furthermore, $\lambda^n \neq \lambda^m$ for $m \neq n$, because $| \lambda | \neq 1$.

If $K$ is maximal parabolic and $g \notin K$, we have $c \neq 0$.
Therefore, if $g h_1^m h_2^n \in gK$  has trace $ z$, then we have
$\pm(a + d + c(m + n \omega)) = z$. This gives two possible solutions for $x=m + n \omega$. Since $1$ and $\omega$ are linearly independent, $x$ uniquely determines $m$ and $n$.
\end{proof}

We next establish a proposition that assumes there are no elements in $gH$ with a given trace. 

\begin{prop}
\label[prop]{Prop:trace lemma}
Let $M = {\H}^3 / \Gamma$ be a hyperbolic $3$-manifold of finite volume. 
Let $H = \langle h \rangle$ be a loxodromic subgroup of $\Gamma$, 
let $g \in \Gamma$, and let $\traceSym$ be a non-zero algebraic number. 
If there are no elements in $gH$ with trace $\traceSym$, then there exist a finite group $G$ and a homomorphism $\varphi \from \Gamma \rightarrow G$ such that
if $\gamma \in \Gamma$ has $\trace(\gamma) = \traceSym$, then
$\varphi(\gamma)$ is not conjugate into $\varphi(gH)$. 
\end{prop}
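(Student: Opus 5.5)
Since trace is a conjugacy invariant, it suffices to find a reduction homomorphism in which no element of $\varphi(gH)$ has trace equal to $\varphi$ applied to $\traceSym$. Then any $\gamma$ with $\trace(\gamma)=\traceSym$ cannot have $\varphi(\gamma)$ conjugate into $\varphi(gH)$. Put $\Gamma$ in the standard form of Notation~\ref{main_notation}, with $h=\mathrm{diag}(\lambda,\lambda^{-1})$. Enlarge the finitely generated ring $R$ so that it contains $a,b,c,d,\lambda^{\pm1},\traceSym$, and the roots of $f(x)=ax^2-\traceSym x+d$ (passing to a finite extension of $k$ if needed). The reduction will be $\varphi\colon \Gamma\to \SL(2,F)$ induced by a ring map $\eta\colon R\to F$ to a finite field. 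Then $\trace(\varphi(gh^n)) = \eta(a)\eta(\lambda)^n+\eta(d)\eta(\lambda)^{-n}$. So we need that no power $\eta(\lambda)^n$ is a root of $\eta(f)$. Up to sign ambiguity, which is handled by also treating $-\traceSym$, this is exactly the statement that $\varphi(\gamma)$ is not conjugate into $\varphi(gH)$ whenever $\trace\gamma=\traceSym$.

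We split into cases according to $a$ and $d$.

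\emph{Case $a=0=d$.} All traces in $gH$ are $0\neq\traceSym$. Choose $\eta$ with $\eta(\traceSym)\neq 0$ by the Remark after \Cref{Prop:Algebraic}, or simply residually.

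\emph{Case exactly one of $a,d$ is zero.} Say $d=0$. The roots condition becomes $\lambda^n \neq \traceSym/a=:\omega$ for all $n$, i.e.\ $\omega$ is not a power of $\lambda$. Apply \Cref{Prop:Algebraic}, or \Cref{Prop:notpower} directly, to get $\eta$ with $\eta(\omega)\notin\langle\eta(\lambda)\rangle$ and $\eta(a)\neq0$. Any $n$ with $\eta(a)\eta(\lambda)^n=\eta(\traceSym)$ would then contradict this. The case $a=0$ is symmetric, using $\omega=\traceSym/d$ and $\lambda^{-1}$.

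\emph{Case $a,d\neq0$.} The roots $\omega_1,\omega_2$ of $f$ are nonzero since $d\neq0$, lie in $R$ after enlarging, and by hypothesis neither is a power of $\lambda$. If $\eta(\lambda)^n$ is a root of $\eta(f)$, then since $\eta(f)=\eta(a)(x-\eta(\omega_1))(x-\eta(\omega_2))$ with $\eta(a)\neq0$, we get $\eta(\lambda)^n\in\{\eta(\omega_1),\eta(\omega_2)\}$. So it suffices to find a single $\eta$ with $\eta(\omega_i)\notin\langle\eta(\lambda)\rangle$ for $i=1,2$ and $\eta(a)\neq0$.

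This simultaneous requirement is the main obstacle. \Cref{Prop:Algebraic} demands that all $\omega_i$ fall in the same one of its three cases, but here $\omega_1$ may be multiplicatively independent of $\lambda$ while $\omega_2$ satisfies a relation, or the two may fall in different relation types.

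\begin{itemize}
\item \emph{Shared relation type.} If both $\omega_i$ are in case \eqref{Itm:Z+Z}, or both in \eqref{Itm:NonDivides}, or both in \eqref{Itm:Divides}, apply \Cref{Prop:Algebraic} directly together with the Remark to keep $\eta(a)\neq 0$.
\item \emph{Cases \eqref{Itm:Z+Z} and \eqref{Itm:NonDivides} mixed.} The ``order divisible by $m$'' clause lets us combine them. Let $s=s_2$ for the relation $\omega_2^{s_2}=\lambda^{t_2}$. Apply case \eqref{Itm:Z+Z} to $\omega_1$ with $m=s$. Then $\eta(\omega_1)\notin\langle\eta(\lambda)\rangle$, and $s$ divides the order of $\eta(\lambda)$. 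The argument of case \eqref{Itm:NonDivides} then excludes $\omega_2$.
\item \emph{Case \eqref{Itm:Divides} mixed with another case.} Here $\omega_2=\lambda^{c}\zeta$ with $\zeta\ne1$ a root of unity of order dividing $s_2$. I would try to run the case \eqref{Itm:Z+Z} construction with $m$ a multiple of $s_2$, and add $\zeta-1$ to the set $X$ of the Remark. The goal is an $\eta$ in which the order of $\eta(\lambda)$ is coprime to $s_2$, or in which $\eta(\zeta)\notin\langle\eta(\lambda)\rangle$.

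This is the delicate point. Perhaps the cleanest route is to modify the Tchebotarev argument: also impose, through the Frobenius, that $\eta(\lambda)$ is a $q$-th power while $\eta(\zeta)$, an $s_2$-th root of unity, lies outside the subgroup generated by $\eta(\lambda)$. This can be done by choosing the Frobenius on the Kummer extension of degree dividing $s_2$ to make $\eta(\lambda)$ an $s_2$-th power.

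An alternative route, when $\omega_1$ and $\omega_2$ are both related to $\lambda$, is to note that $\omega_1\omega_2=d/a$ and argue directly.
\end{itemize}

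Finally, set $G=\varphi(\Gamma)\subset\SL(2,F)$ or $\PSL(2,F)$. Because sign issues for $\PSL$ could identify trace $\traceSym$ with $-\traceSym$, I would run the argument with both polynomials $ax^2\mp\traceSym x+d$ simultaneously. This adds at most four $\omega_i$ to the list, handled by the same mixed-case analysis.
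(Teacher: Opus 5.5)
Your reduction to a single residue map $\eta$ with $\eta(\omega_1),\eta(\omega_2)\notin\langle\eta(\lambda)\rangle$ works in two situations: when both roots are of the same type, and in the mixed case (1)/(2) of \Cref{Prop:Algebraic}. That is exactly how the paper handles those cases.

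It leaves a genuine gap in the remaining mixed cases. There $\omega_2=\lambda^{r}\zeta$ with $\zeta\neq 1$ a root of unity of order dividing $s_2$, and $\omega_1$ is either multiplicatively independent of $\lambda$ or satisfies $\omega_1^{s_1}=\lambda^{t_1}$ with $s_1\nmid t_1$.

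\begin{itemize}
\item Your first suggestion defeats itself. If $s_2$ divides the order of $\eta(\lambda)$ in the cyclic group $F^*$, then $\langle\eta(\lambda)\rangle$ contains every element of order dividing $s_2$. So $\eta(\zeta)\in\langle\eta(\lambda)\rangle$, and $\eta(\omega_2)$ \emph{is} a power of $\eta(\lambda)$.
\item Your second suggestion is insufficient as stated. Making $\eta(\lambda)$ an $s_2$-th power does not make its order prime to $s_2$. For instance, with $\zeta=-1$, a square in $F^*$ can have even order once $4\mid |F^*|$ (e.g.\ $-1$ itself in $\mathbb{F}_5$). You would need to control the whole $s_2$-primary part of $F^*$, which involves the full cyclotomic tower, and you have not done this.
\item The remark that $\omega_1\omega_2=d/a$ is not an argument.
\end{itemize}

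The missing idea is that you do not need a single $\eta$. Trace is checked factor by factor in a product of reductions, so it suffices that no $n$ produces a collision in \emph{every} factor at once. The paper proceeds as follows.
\begin{itemize}
\item Take $\eta_1$ from \Cref{Prop:orderm} with $\eta_1(\lambda)$ of prime order $q>s_2$, $\eta_1(\zeta)\neq 1$, and $\eta_1(\lambda^r)\neq\eta_1(\omega_1)$.
\item Take $\eta_2$ from \Cref{Prop:Algebraic} applied to $\omega_1$ alone with $m=q$.
\item In $\eta_1$, the root $\omega_2$ is excluded, since a nontrivial $s_2$-th root of unity cannot lie in a group of prime order $q>s_2$. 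So a collision forces $\eta_1(\lambda^n)=\eta_1(\omega_1)$.
\item In $\eta_2$, the root $\omega_1$ is excluded, so $\eta_2(\lambda^n)=\eta_2(\lambda^r\zeta)$. Raising to the $s_2$-th power gives $q\mid n-r$.
\item Hence $\eta_1(\lambda^n)=\eta_1(\lambda^r)\neq\eta_1(\omega_1)$, a contradiction.
\end{itemize}
Taking $\varphi=\varphi_1\times\varphi_2$ closes the gap.

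Separately, drop the plan to also exclude the roots of $ax^2+\tau x+d$. The hypothesis says nothing about elements of trace $-\tau$ in $gH$, so those roots may well be powers of $\lambda$. No such exclusion is needed either, since $\varphi$ lands in $\SL(2,F)$, where trace is an honest conjugacy invariant. On the other hand, your explicit handling of the degenerate cases $a=0$ or $d=0$, and your insistence on $\eta(a)\neq 0$, are careful points that the paper leaves implicit.
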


\begin{proof}
We continue to use Notation \ref{main_notation}. As in \Cref{prop:max_collisions}, let $f(x)= ax^2 - \traceSym x + d$,
and observe that $\trace(gh^n)=\traceSym$ if and only if $\lambda^n$ is a root of $f$.
Let $\omega_1$ and $\omega_2$ be the nonzero roots of $f$. (Since $\tau \neq 0$, there is at least one nonzero root. If $0$ is a root, we have $\omega_1 = \omega_2$.)
 The consequence of this setup is that $\trace(gh^n)=\traceSym$ if and only if $\lambda^n=\omega_i$ for some $i$.

Let $R \subset k$ be the ring generated by the coefficients of the generators of $\Gamma$, as well as $\omega_1$ and $\omega_2$. 
Then $f(x)$ splits completely over $R$.

We will provide different arguments, depending on the relationship between $\lambda$, $\omega_1$, and $\omega_2$. For $i \in \{ 1, 2 \}$, 
let $G_i = \langle \lambda, \omega_i \rangle$ be the subgroup of $k^{\ast} = k \setminus \{ 0 \}$ generated by $\lambda$ and $\omega_i$. 
Since $\lambda$ is not a root of unity, $G_i$ is an infinite abelian group generated by two elements. Hence, $G_i$ is isomorphic to 
$\mathbb{Z} \oplus \mathbb{Z}$, $\mathbb{Z} \oplus \mathbb{Z}/m \mathbb{Z}$, or $\mathbb{Z}$, for some natural number $m > 1$. 
In the latter two cases,  there exist integers $s_i$ and $t_i$, not both zero, such that $\omega_i^{s_i} = \lambda^{t_i}$. Note that $s_i \neq 0$ because $\lambda$ has infinite order.
Therefore, after replacing $( s_i, t_i )$ with $(-s_i, -t_i)$, if necessary, we may assume that $s_i \in \mathbb{N}$. 
By symmetry, we can reduce to the following $6$ cases. 

\begin{enumerate}[\:\:(i)]
\item \label{case_two_ZxZ} $G_1 \cong G_2 \cong \mathbb{Z} \oplus \mathbb{Z}$
\item \label{case_neither_div} $\omega_1^{s_1} = \lambda^{t_1}$ and $\omega_2^{s_2} = \lambda^{t_2}$, for some $s_1, s_2 \in \mathbb{N}, \ t_1, t_2 \in \mathbb{Z}$ 
such that $s_1 \nmid t_1$ and $s_2 \nmid t_2$
\item \label{case_both_div}$\omega_1^{s_1} = \lambda^{t_1}$ and $\omega_2^{s_2} = \lambda^{t_2}$, for some $s_1, s_2 \in \mathbb{N}, \ t_1, t_2 \in \mathbb{Z}$ 
such that $s_1 \mid t_1$ and $s_2 \mid t_2$
\item \label{case_mixed_div} $\omega_1^{s_1} = \lambda^{t_1}$ and $\omega_2^{s_2} = \lambda^{t_2}$, for some $s_1, s_2 \in \mathbb{N}, \ t_1, t_2 \in \mathbb{Z}$ 
such that $s_1 \nmid t_1$ and $s_2 \mid t_2$

\item \label{case_one_ZxZ_no_div} $G_1 \cong \mathbb{Z} \oplus \mathbb{Z}$ and $\omega_2^{s_2} = \lambda^{t_2}$, for some $s_2 \in \mathbb{N}, \  t_2 \in \mathbb{Z}$ such that $s_2 \nmid t_2$
\item \label{case_one_ZxZ_div} $G_1 \cong \mathbb{Z} \oplus \mathbb{Z}$ and $\omega_2^{s_2} = \lambda^{t_2}$, for some $s_2 \in \mathbb{N},  \ t_2 \in \mathbb{Z}$ such that $s_2 \mid t_2$
\end{enumerate}
\medskip

\noindent
\emph{Cases \eqref{case_two_ZxZ}, \eqref{case_neither_div}, \eqref{case_both_div}:}
\medskip

Recall from above that $\trace(gh^n)=\traceSym$ if and only if $\lambda^n=\omega_i$. Thus, since there are no elements in $gH$ with trace $\traceSym$,  we have $\{ \omega_1, \omega_2 \} \cap \langle \lambda \rangle = \emptyset$.
By \Cref{Prop:Algebraic}, there exist a finite field $F$ and a ring homomorphism
$\eta \from R \rightarrow F$ such that $ \eta(\omega_1)$ and $\eta(\omega_2)$ are not multiplicative powers of
$\eta(\lambda)$.
The ring homomorphism
$\eta \from R \rightarrow F$ induces a group homomorphism $\varphi\from \Gamma \subset \SL(2, R) \rightarrow \SL(2, F).$
Suppose there exist $gh^n \in gH$ and $\gamma \in \Gamma$, with $\trace(\gamma) = \traceSym$, such that $\varphi(gh^n)$ is conjugate to $\varphi(\gamma)$. Then 
$\eta(a {\lambda}^n + d {\lambda}^{-n}) = \eta(\traceSym)$. Therefore, $\eta(\lambda^n)$ is a root of $\overline{f}(x) = \eta(a) x^2 - \eta(\traceSym) x + \eta(d)$ in $F[x]$.  The
polynomial $\overline{f}(x)$ inherits a factorization from $f(x)$. 
Since $F[x]$ is a unique factorization domain, this implies that $\eta(\lambda^{n}) \in \{ \eta(\omega_1), \eta(\omega_2) \}$,
a contradiction. 
\medskip

\noindent
\emph{Case \eqref{case_one_ZxZ_no_div}:}  
\medskip

By \Cref{Prop:Algebraic} applied to $G_1$ with $m=s_2$, there exist a finite field $F$ and a ring homomorphism $\eta \from R \rightarrow F$ such that
$\eta(\omega_1)$ is not a multiplicative power of $\eta(\lambda)$, and the multiplicative order of $\eta(\lambda)$
is divisible by $s_2$.  The ring homomorphism
$\eta \from R \rightarrow F$ induces a group homomorphism $\varphi\from \Gamma \subset \SL(2, R) \rightarrow \SL(2, F).$
Suppose there exist $gh^n \in gH$ and $\gamma \in \Gamma$, with $\trace(\gamma) = \traceSym$, such that $\varphi(gh^n)$ is conjugate to $\varphi(\gamma)$.
Then, by the argument above, 
$\eta(\lambda^{n}) \in \{ \eta (\omega_1), \eta(\omega_2) \}$. By construction, this implies that $\eta(\lambda^{n}) = \eta(\omega_2)$. 
Therefore, $\eta(\lambda^{ns_2}) = \eta(\omega_2^{s_2}) = \eta(\lambda^{t_2})$. Since the multiplicative order of $\eta(\lambda)$
is divisible by $s_2$, it follows that $ns_2 \equiv t_2 \ (\mathrm{mod} \ s_2).$
We conclude that $s_2 \mid t_2$, a contradiction.
\medskip

\noindent
\emph{Cases \eqref{case_mixed_div}, \eqref{case_one_ZxZ_div}:}
\medskip

Write $t_2 = s_2r$, for some $r \in \mathbb{Z}$. Since $\omega_2^{s_2} = \lambda^{t_2} = \lambda^{s_2r}$, we have
$\omega_2 = \lambda^r \zeta$, where $\zeta$ is an $s_2$-th root of unity. 
Since $gH$ contains no element of trace $\traceSym$, we have that $\lambda^n\ne \omega_i$ for any $n\in \Z$. In particular, $\lambda^{r} \neq \omega_1$ and $\zeta \neq 1$. 
We will construct two group homomorphisms.  
By \Cref{Prop:orderm}, there exist a prime $q > s_2$, a finite field $F_1$, and a ring homomorphism $\eta_1 \from R \rightarrow F_1$, 
such that $\eta_1(\zeta) \neq 1$, $\eta_1(\lambda^{r}) \neq \eta_1(\omega_1)$,
and the multiplicative order of $\eta_1(\lambda)$ is equal to $q$.  
By \Cref{Prop:Algebraic} applied to $G_1$ with $m=q$, there exist a finite field $F_2$, and a ring homomorphism $\eta_2 \from R \rightarrow F_2$ such that
$\eta_2(\omega_1)$ is not a multiplicative power of $\eta_2(\lambda)$, 
and the multiplicative order of $\eta_2(\lambda)$ is divisible by $q$.  For $i \in \{ 1, 2 \}$, the ring homomorphisms $\eta_i$ induce group homomorphisms
$\varphi_i\from \Gamma \subset \SL(2, R) \rightarrow \SL(2, F_i).$ Consider 
$$\varphi = \varphi_1 \times \varphi_2 \from \Gamma \rightarrow \SL(2, F_1) \times \SL(2, F_2).$$
Suppose there exist $gh^n \in gH$ and $\gamma \in \Gamma$, with $\trace(\gamma) = \traceSym$, such that $\varphi(gh^n)$ is conjugate to $\varphi(\gamma)$. Then 
$\varphi_1(gh^n)$ is conjugate to $\varphi_1(\gamma)$, and $\varphi_2(gh^n)$ is conjugate to $\varphi_2(\gamma)$. 
This implies that $\eta_1(\lambda^{n}) \in \{ \eta_1(\omega_1), \eta_1(\omega_2) \}$, and
$\eta_2(\lambda^{n}) \in \{ \eta_2(\omega_1), \eta_2(\omega_2) \}$. By construction, $\eta_2(\lambda^{n}) \neq \eta_2(\omega_1)$. 
If $\eta_1(\lambda^{n}) = \eta_1(\omega_2) = \eta_1(\lambda^{r} \zeta)$, then
$\eta_1(\zeta) \in \langle \eta_1(\lambda) \rangle$. This is a contradiction, since $\eta_1(\zeta)$ is a non-trivial $s_2$-th root of unity and 
$\langle \eta_1(\lambda) \rangle$ is a group of prime order $q > s_2$. We conclude that 
$$ 
\eta_1(\lambda^{n}) = \eta_1(\omega_1) \quad  \mbox{ and } \quad  \eta_2(\lambda^{n}) = \eta_2(\omega_2).$$
Since $\eta_2(\lambda^{n}) = \eta_2(\omega_2) = \eta_2(\lambda^{r} \zeta)$, we have $\eta_2(\lambda^{ns_2}) = 
\eta_2(\lambda^{rs_2})$. Therefore,
$ns_2 \equiv rs_2 \ (\mathrm{mod} \ q)$, which implies that $n \equiv r \ (\mathrm{mod} \ q)$ as $q$ is a prime greater than $s_2$.
Since $q \mid (n - r)$ and the multiplicative order of $\eta_1(\lambda)$ is equal to $q$, we conclude that 
$\eta_1(\lambda^{n-r}) = 1$. But then, $\eta_1(\lambda^{r}) = \eta_1(\lambda^{n}) = \eta_1(\omega_1)$, a contradiction. 
\end{proof}

We now apply \Cref{Prop:trace lemma} to prove that if $H$ is loxodromic, then the coset $gH$ is conjugacy distinguished. Moreover, we describe
a class of finite quotients of $\Gamma$ in the case where $gH$ contains conjugates of a specified element $\gamma$. 

\begin{prop}
\label[prop]{Prop:loxo lemma}
Let $M = {\H}^3 / \Gamma$ be a hyperbolic $3$-manifold of finite volume, such that $g$ and $\gamma$
are elements of $\Gamma$ and $H = \langle h \rangle$ is a loxodromic subgroup of $\Gamma$.
Let $S$ be the set of elements in $gH$ that are conjugate to $\gamma$, and note that  $|S| \leq 2$  by \Cref{prop:max_collisions}.
\begin{enumerate}[\:\:$(a)$]
\item \label{item:S_empty} If $S = \emptyset$, then there exist a finite group $G$ and a homomorphism $\varphi \from \Gamma \rightarrow G$
such that $\varphi(\gamma)$ is not conjugate into $\varphi(gH)$. 
\item \label{item:S_non_empty} If $S \neq \emptyset$, write
$S = \{gh^{u_1}, gh^{u_2} \}$, where $u_1$ and $u_2$ may coincide.
 For every natural number $m$,
there exist a finite group $G_m$ and a group homomorphism $\varphi_m:\Gamma \rightarrow$ $G_m$ such that, if
$\varphi_m(\gamma)$ is conjugate to $\varphi_m(gh^n)$ for some $n \in \Z$, then $n \equiv u_i \ (\mathrm{mod} \ m)$ for some $i \in \{ 1, 2 \}$.
\end{enumerate}
\end{prop}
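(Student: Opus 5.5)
The plan is to prove \eqref{item:S_non_empty} first, by adapting the proof of \Cref{Prop:trace lemma}, and then to deduce \eqref{item:S_empty} from it together with conjugacy separability of $\Gamma$. We use Notation~\ref{main_notation} and set $\tau = \trace(\gamma)$. Note that $\tau \neq 0$: an element of $\SL(2,\C)$ with trace $0$ has order $4$, and its image in $\PSL(2,\C)$ has order $2$, which is impossible since $\Gamma$ is torsion-free. As in \Cref{Prop:trace lemma}, let $\omega_1, \omega_2$ be the nonzero roots of $f(x) = ax^2 - \tau x + d$, and enlarge $R$ so that it contains them.

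Conjugacy in $\SL(2,F)$ preserves trace. So if $\varphi(\gamma)$ is conjugate to $\varphi(gh^n)$ for a reduction $\varphi$ induced by $\eta \from R \to F$, then $\eta(\lambda^n)$ is a root of the reduced polynomial $\overline f$. By unique factorization in $F[x]$, this gives $\eta(\lambda^n) \in \{\eta(\omega_1), \eta(\omega_2)\}$.

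\textbf{Part \eqref{item:S_non_empty}.} Write $S = \{gh^{u_1}, gh^{u_2}\}$. The elements of $gH$ with trace $\tau$ correspond to exponents $n$ with $\lambda^n \in \{\omega_1, \omega_2\}$, so at least one $\omega_i$ is a power of $\lambda$. An element of $gH$ with trace $\tau$ need not lie in $S$, but it is harmless to record its exponent as well: we only need a congruence to some distinguished exponent, and we may enlarge the list of $u_i$ accordingly. The argument then splits into two cases.

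\emph{Both $\omega_i$ are powers of $\lambda$,} say $\omega_i = \lambda^{v_i}$. Use \Cref{Prop:orderm} to choose $\eta$ with the multiplicative order of $\eta(\lambda)$ equal to $m' = m\ell$ for some large $\ell$. Then $\eta(\lambda^n) = \eta(\lambda^{v_i})$ forces $n \equiv v_i \pmod{m}$.

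\emph{Exactly one is a power,} say $\omega_1 = \lambda^{v_1}$, and $\omega_2$ is not. We need $\eta(\omega_2) \notin \langle \eta(\lambda)\rangle$ while $m$ divides the order of $\eta(\lambda)$.
\begin{itemize}
\item If $G_2 \cong \Z\oplus\Z$, or $\omega_2^{s_2} = \lambda^{t_2}$ with $s_2 \nmid t_2$, then \Cref{Prop:Algebraic} (cases \eqref{Itm:Z+Z} and \eqref{Itm:NonDivides}, with this $m$) gives such an $\eta$ directly.
\item If $s_2 \mid t_2$, then $\omega_2 = \lambda^r\zeta$ with $\zeta \neq 1$ a root of unity. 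Here I would combine two reductions, as in cases \eqref{case_mixed_div} and \eqref{case_one_ZxZ_div} of \Cref{Prop:trace lemma}. Let $\eta_1$ give $\eta(\lambda)$ of prime order $q > s_2$, so that $\eta_1(\lambda^n) \neq \eta_1(\omega_2)$. Let $\eta_2$ give order divisible by $m$, taken from \Cref{Prop:orderm}. Use the product $\varphi_1 \times \varphi_2$. Then a collision forces $\eta_1(\lambda^n) = \eta_1(\lambda^{v_1})$ and $\eta_2(\lambda^n) \in \{\eta_2(\lambda^{v_1}), \eta_2(\lambda^r\zeta)\}$.
\end{itemize}

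I expect this mixed case to be the main obstacle. The second alternative only yields $n \equiv r$ modulo something, rather than $n \equiv v_1$. To handle it, I would first choose $\eta_2$ so that $\eta_2(\zeta) \neq 1$ and the order of $\eta_2(\lambda)$ is a prime $p > s_2$ with $p$ coprime to $m$; then $\eta_2(\lambda^r\zeta) \notin \langle \eta_2(\lambda) \rangle$. I would then add a third factor, with order a multiple of $m$ and $\eta_3(\zeta) \neq 1$, and arrange the orders so that the only consistent solution is $n \equiv v_1 \pmod{m}$. Alternatively, one can choose $\eta_3$ so that the order of $\eta_3(\lambda)$ is $m$ times a prime larger than $s_2$; then $\eta_3(\zeta) \notin \langle\eta_3(\lambda)\rangle$ directly whenever $\gcd$ considerations allow. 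The first thing to try is to choose orders of the form $mq$ with $q$ a large prime, using \Cref{Prop:orderm} and the Remark after \Cref{Prop:Algebraic} to keep $\eta(\zeta - 1) \neq 0$.

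\textbf{Part \eqref{item:S_empty}.} If no element of $gH$ has trace $\tau$, then \Cref{Prop:trace lemma} applies directly. Otherwise, at most two elements $gh^{u_1}, gh^{u_2}$ have trace $\tau$ (\Cref{prop:max_collisions}), and by hypothesis neither is conjugate to $\gamma$. Since $\Gamma$ is conjugacy separable \cite{HWZ:Separability}, there is a finite quotient $\varphi_0$ in which $\varphi_0(\gamma)$ is conjugate to neither $\varphi_0(gh^{u_i})$. Let $m$ be the order of $\varphi_0(h)$, and let $\varphi_m$ be as produced in the argument for \eqref{item:S_non_empty} using the exponents $u_i$. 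Take $\varphi = \varphi_0 \times \varphi_m$. A conjugacy $\varphi(\gamma) \sim \varphi(gh^n)$ forces $n \equiv u_i \pmod{m}$ for some $i$. Then $\varphi_0(gh^n) = \varphi_0(gh^{u_i})$, which is conjugate to $\varphi_0(\gamma)$, a contradiction.
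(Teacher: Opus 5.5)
Your reduction of part (a) to a statement modulo $m$ is the paper's argument: you kill the residue classes with a conjugacy-separability quotient $\varphi_0$ and take $m=\operatorname{ord}\varphi_0(h)$. Your cases ``both $\omega_i$ are powers of $\lambda$'' and ``$G_2\cong\Z\oplus\Z$ or $s_2\nmid t_2$'' also match the paper's Claim. But the case $\omega_1=\lambda^{v_1}$, $\omega_2=\lambda^{r}\zeta$, with $\zeta\neq 1$ an $s$-th root of unity, is not proved. You correctly flag it as the obstacle, but ``arrange the orders so that the only consistent solution is $n\equiv v_1\pmod m$'' is precisely the missing argument.

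Your single-reduction idea cannot work in general. Since $F^*$ is cyclic, if $\operatorname{ord}(\zeta)$ divides $m$ (e.g.\ $\zeta=-1$ with $m$ even), then every $\eta$ with $m\mid\operatorname{ord}\eta(\lambda)$ satisfies $\eta(\zeta)\in\langle\eta(\lambda)\rangle$. So the branch $\eta(\lambda^n)=\eta(\lambda^r\zeta)$ can never be excluded in the factor that controls $n \bmod m$; it must be killed by a congruence coming from another factor.

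The paper does this with conjugacy separability. If $\gamma$ is not conjugate to $gh^r$, take $\theta_1$ separating them, let $o=\operatorname{ord}\theta_1(h)$, and choose $\eta$ with $oms\mid\operatorname{ord}\eta(\lambda)$. The $\zeta$-branch then gives $\eta(\lambda^{s(n-r)})=1$, hence $n\equiv r\pmod o$, which contradicts the choice of $\theta_1$. If $\gamma$ is conjugate to $gh^r$, then $gh^r\in S$ forces $r=v_1$. Hence $\omega_2^s=\omega_1^s$, and both branches give $n\equiv v_1\pmod m$ once $ms\mid\operatorname{ord}\eta(\lambda)$.

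A purely algebraic repair along your lines also works. When $r\neq v_1$, choose your prime $q>s$ with $q\nmid(v_1-r)$, and make $qms$ divide the order of $\eta_2(\lambda)$. The $\zeta$-branch then forces $n\equiv r\pmod q$, contradicting $n\equiv v_1\pmod q$ from $\eta_1$. Since your part (a) invokes this $\varphi_m$, the gap propagates there too.

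Second, in part (b) you replace $S$ by the set $T$ of elements of $gH$ with trace $\traceSym$, and call this harmless. It is not: the statement requires the $u_i$ to be exponents of elements actually conjugate to $\gamma$. If $T=\{gh^{v_1},gh^{v_2}\}$ with $gh^{v_2}\notin S$, your $\varphi_m$ only yields $n\equiv v_1$ or $n\equiv v_2\pmod m$, which is a weaker conclusion.

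The distinction is used later. In Case~2 of the proof of \Cref{Theorem:ConjugacyDistinguished}, one needs $n_i\notin\{u_1,u_2\}$, and this is derived from the $u_i$ being exponents of elements conjugate to $\psi(\gamma)$.

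The fix is the same trick you use in part (a). Take $\theta$ with $\theta(\gamma)$ not conjugate to $\theta(gh^{v_2})$, let $o=\operatorname{ord}\theta(h)$, and take the product of $\theta$ with your $T$-version at modulus $om$. In the paper this is built into the case split on whether $\gamma$ is conjugate to $gh^r$: when $|T|=2$ one is in case (iii) with $s=1$ and $r=n_2$.
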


\begin{proof} Let $\traceSym$ be the trace of $\gamma$.  Denote by $T$ be the set of elements in $gH$ of trace $\traceSym$ (so that $S \subset T$). By \Cref{prop:max_collisions}, we know that $|T| \leq 2$.
 
If $T = \emptyset$, then by \Cref{Prop:trace lemma}, 
there exist a finite group $G$ and a group homomorphism $\varphi\from \Gamma \rightarrow G$ such that
$\varphi(\gamma)$ is not conjugate into $\varphi(gH)$, so part \eqref{item:S_empty} is satisfied. 
Therefore, we may assume that $T$ is non-empty.
Write $T = \{ gh^{n_1}, gh^{n_2} \}$, where $n_1$ and $n_2$ may coincide. 

\begin{claim}\label{claim:group homomorphism}
For every natural number $m$, there exist a finite group $G_m$ and a homomorphism $\varphi_m: \Gamma \rightarrow G_m$
such that if $\varphi_m(\gamma)$ is conjugate to $\varphi_m(gh^n)$ for some $n \in \Z$, then $n \equiv n_i \ (\mathrm{mod} \ m)$ for some $i \in \{ 1, 2 \}$.
\end{claim}

\begin{proof}[Proof of Claim \ref{claim:group homomorphism}]

Recall that an element $gh^n \in gH$ has trace $\traceSym$ if and only if $ \lambda^n$ is a (nonzero) root of  $f(x) = ax^2-\tau x +d$. In particular, if $f$ has only one nonzero root, then $|T| = 1$.
Let $\omega_1$ and $\omega_2$ be the (not necessarily distinct) nonzero roots of $f(x)$, labeled as follows. 
If $n_1 = n_2$, then $\omega_1 = \lambda^{n_1} = \lambda^{n_2}$. If $n_1 \neq n_2$, then 
$\omega_1 = \lambda^{n_1}$ and $\omega_2 = \lambda^{n_2} $. The case in which $|T| = 1$ requires separate consideration,
since in this situation we no longer have any 
information relating $\lambda$ and $\omega_2$.
As in the proof of \Cref{Prop:trace lemma}, it suffices to consider the following three possibilities. 
Observe that the case $|T| = 2$ is included in \eqref{case_4_2_div} below with $s = 1$.

\begin{enumerate}[\:\:(i)]
\item \label{case_4_2_ZxZ} $\langle \lambda, \omega_2 \rangle \cong \mathbb{Z} \oplus \mathbb{Z}$

\smallskip

\item \label{case_4_2_no_div} $\omega_2^s = \lambda^t$, for some $s \in \mathbb{N}, \ t \in \mathbb{Z}$ such that $s \nmid t$

\smallskip

\item \label{case_4_2_div} $\omega_2^s = \lambda^t$, for some $s \in \mathbb{N}, \ t \in \mathbb{Z}$ such that $s \mid t$
\end{enumerate}
\medskip

\noindent
\emph{Cases \eqref{case_4_2_ZxZ}, \eqref{case_4_2_no_div}:} Note that in these cases, $\omega_2$ is not a multiplicative power of $\lambda$. 

By \Cref{Prop:Algebraic}, there exist a finite field $F$ and a ring homomorphism
$\eta\from R \rightarrow F$ such that $\eta(\omega_2)$ is not multiplicative power of
$\eta(\lambda)$ and the multiplicative order of $\eta(\lambda)$ is divisible by $m$. The ring homomorphism
$\eta$ induces a group homomorphism $\varphi_m\from \Gamma \subset \SL(2, R) \rightarrow \SL(2, F).$ If there exists 
$gh^n \in gH$ such that $\varphi_m(gh^n)$ is conjugate to $\varphi_m(\gamma)$,
then $\eta(\traceSym) =  \trace(\varphi(gh^n))$. It follows that $\eta(\lambda^n)
\in \{ \eta(\omega_1), \eta(\omega_2) \}$. By construction, this implies that $\eta(\lambda^n)
 = \eta(\omega_1) = \eta(\lambda^{n_1})$. We conclude that $n \equiv n_1$ (mod $m$).
\medskip

\noindent
\emph{Case \eqref{case_4_2_div}:} 
By assumption, there exists an integer $r$ such that $t = rs$. Since $\omega_2^s = \lambda^t = \lambda^{rs}$, 
we have $\omega_2= \lambda^{r} \zeta$, where $\zeta$ is an $s$-th root of unity. 

Suppose that $\gamma$ is not conjugate to $gh^r$. 
Since $\pi_1(M)$ is conjugacy separable (see \cite[Implication H.8]{AFW}), there exist a finite group $G$ and a group homomorphism
$\theta_1\from \Gamma \rightarrow G$ such that $\theta_1(\gamma)$ is not 
conjugate to $\theta_1(gh^r)$. Let $o$ be the order of $\theta_1(h)$ in $G$. 
By \Cref{Prop:orderm}, there exist a finite field $F$ and a ring homomorphism $\eta\from R \rightarrow F$ such that 
the multiplicative order of $\eta(\lambda)$ is divisible by $oms$.
The ring homomorphism $\eta$ induces a group homomorphism 
$\theta_2\from \Gamma \subset \SL(2,R) \rightarrow \SL(2,F)$. 
Now, define
\[
{\varphi_m = \theta_1 \times \theta_2\from \Gamma \rightarrow G \times \SL(2,F). }
\]
Suppose $\varphi_m(\gamma)$ is conjugate to $\varphi_m(gh^n)$, for some $n \in \Z$.
Since $\theta_1(\gamma)$ is not 
conjugate to $\theta_1(gh^r)$, we have $\theta_1(h^n) \neq \theta_1(h^r)$.
Therefore $n \not\equiv r$ (mod $o$). 
Since $\theta_2(\gamma)$ is conjugate to $\theta_2(gh^n)$, 
we have $\eta(\lambda^n) \in \{ \eta(\omega_1), \eta(\omega_2) \}$.
If $\eta(\lambda^n) = \eta(\omega_2) = \eta(\lambda^r \zeta)$, then $\eta(\lambda^{ns}) = \eta(\lambda^{rs})$, and so
$ns \equiv rs$ (mod $os$). This implies that $n \equiv r$ (mod $o$), a contradiction. We conclude that
$\eta(\lambda^n) = \eta(\omega_1) = \eta(\lambda^{n_1})$, which implies $n \equiv n_1$ (mod $m$).
 
Suppose that $\gamma$ is conjugate to $gh^r$. By \Cref{Prop:orderm}, there exist a finite field $F$ and a ring homomorphism
$\eta\from R \rightarrow F$ such that the multiplicative order of $\eta(\lambda)$ 
is divisible by $ms$. The ring homomorphism $\eta$
induces a group homomorphism $\varphi_m\from \Gamma \subset \SL(2,R) \rightarrow \SL(2,F)$. By assumption,
$gh^r \in S \subset T = \{ gh^{n_1}, gh^{n_2} \}$. If $gh^r = gh^{n_1}$, then $\omega_2 = 
\lambda ^r \zeta = \lambda^{n_1} \zeta$. Thus $\omega_2^s = \lambda^{n_1s} = \omega_1^s$. 
If $\varphi_m(\gamma)$ is conjugate to $\varphi_m(gh^n)$, for some $n \in \Z$, then
$\eta(\lambda^n) \in \{ \eta(\omega_1), \eta(\omega_2) \}$, and consequently $\eta(\lambda^{ns}) = \eta(\lambda^{n_1s})$.
It follows that $ns \equiv n_1 s \smod {ms}$, and so $n \equiv n_1 \smod m$. If $gh^r \neq gh^{n_1}$ then, by construction, 
$gh^r = gh^{n_2}$, $n_1 \neq n_2$, and $\omega_2 = \lambda^{n_2}$. If $\varphi_m(\gamma)$ is conjugate to 
$\varphi_m(gh^n)$, for some $n \in \Z$, then
$\eta(\lambda^n) \in \{ \eta(\omega_1), \eta(\omega_2) \} = \{ \eta(\lambda^{n_1}), \eta(\lambda^{n_2}) \}$. Therefore,
$n \equiv n_i \smod m$ for some $i \in \{ 1, 2 \}$. This completes the construction of
$\varphi_m\from \Gamma \rightarrow G_m$.
\end{proof}

We now complete the proof of the proposition.

If $S$ is empty, then $\gamma$ is not conjugate to $gh^{n_1}$ or $gh^{n_2}$ in $\Gamma$. 
Therefore, by the conjugacy separability of finite co-volume hyperbolic 3-manifold groups, 
there exist a finite group $G$ and a group homomorphism
$\psi\from \Gamma \rightarrow G$ such that $\psi(\gamma)$ is not 
conjugate to $\psi(gh^{n_1})$ or $\psi(gh^{n_2})$. Let $m$ be the order of $\psi(h)$ in $G$. 
By Claim \ref{claim:group homomorphism} 
there exist a finite group $G_m$ and a group homomorphism $\varphi_m\from \Gamma \rightarrow G_m$ such that if
$\varphi_m(\gamma)$ is conjugate to $\varphi_m(gh^n)$, for some $n \in \Z$, then 
$$n \equiv n_1 \smod m \ \ \mathrm{or} \ \ n \equiv n_2 \smod m.$$
Let $\varphi:= \psi \times \varphi_m\from \Gamma \rightarrow G \times G_m.$
Suppose that $\varphi(\gamma)$ is conjugate to $\varphi(gh^n)$ for some $n \in \Z$. Since
$\psi(\gamma)$ is not conjugate to $\psi(gh^{n_1})$ or $\psi(gh^{n_2})$, we have $\psi(h^n) \neq \psi(h^{n_1})$
and $\psi(h^n) \neq \psi(h^{n_2})$. Therefore, $$n \not\equiv n_1 \smod m \ \ \mathrm{and} \ \ n \not\equiv n_2 \smod m, 
\ \mathrm{a} \ \mathrm{contradiction}.$$

If $S$ is non-empty, we may assume that $gh^{n_1} = gh^{u_1} \in S \subset T$. In the construction of 
$\varphi_m$ in the proof of Claim \ref{claim:group homomorphism}, we have $n \equiv n_1 \smod m$ unless 
$n_1 \neq n_2$ and $\gamma$ is conjugate to 
$gh^{n_2}$. In this case $gh^{n_2} = gh^{u_2}$. Therefore, $\varphi_m\from \Gamma \rightarrow G_m$ satisfies the conclusion of the proposition.
\end{proof}

We next prove that a coset of an abelian subgroup is conjugacy distinguished. 

\begin{named}{Theorem~\ref{Theorem:ConjugacyDistinguished}}
Let $M = {\H}^3 / \Gamma$ be a hyperbolic $3$-manifold of finite volume, let $g \in \Gamma$, and
let $H$ be an abelian subgroup of $\Gamma$. Then the coset $gH$ is conjugacy distinguished.
\end{named}

\begin{proof}
If $H$ is the subgroup consisting of the identity element, then the result follows from the conjugacy separability of finite co-volume hyperbolic 3-manifold groups \cite[Implication H.8]{AFW}.
If $g \in H$, then $gH = H$ is conjugacy distinguished by Chagas and Zalesskii \cite[Theorem A]{CZ2}. Let $\gamma$ be an element of $\Gamma$ 
that is not conjugate into $gH$. If $\gamma$ is the identity element of $\Gamma$, then the result follows from 
{the separability of abelian subgroups, proved by Allman and Hamilton \cite{AllmanHamilton}.}
Therefore, we may assume that $H$ is nontrivial, $g \notin H$, and $\gamma$ is nontrivial.

\medskip

\noindent
\emph{Case 1:} $H = \langle h \rangle$ is loxodromic.

\medskip
This follows from \Cref{Prop:loxo lemma}, part (a). 
 
 \medskip
 
\noindent
\emph{Case 2:} $H$ is maximal parabolic.
 
\medskip
  
 Let $\traceSymTwo$ be the trace of $\gamma$, and let $X$ be the set of all elements in $gH$ with trace $\pm \traceSymTwo$.
By \Cref{prop:max_collisions}, the set $X$ is finite. By assumption, no element of $X$ is conjugate to $\gamma$ in $\Gamma$.
For the sake of applying \Cref{Prop:not conjugate} and Notation \ref{main_notation}, define $K = H$.
Therefore, by \Cref{Prop:not conjugate}, we can choose generators $h_1$ and $h_2$ of $K = H$, representing slopes that can be completed to tuples $\ss_1$ and 
$\ss_2$, such that $M(\ss_1)$ and $M(\ss_2)$ are hyperbolic manifolds. Thus, for $j = 1, 2$, we may write $M(\ss_j) = \H^3 / \Gamma(\ss_j)$. Let $\psi_{\ss_j} \from \Gamma \to \Gamma(\ss_j)$ be the quotient homomorphism, where we are now considering $\Gamma(\ss_j)$ as a Kleinian group. By \Cref{Prop:not conjugate}, no element in $\psi_{\ss_j}(X)$  is conjugate to $\psi_{\ss_j}(\gamma)$ in $\Gamma(\ss_j)$.

Using Notation \ref{main_notation} and \Cref{prop:max_collisions} with a parabolic subgroup $H=K$, recall that  $Z_{\omega} = \{ m + n\omega
\ \vert \ m, n \in \mathbb{Z} \}$ and 
\[
gH \subset  
\left \{ \pm \begin{pmatrix} a & ax + b \\ c & cx + d \\ \end{pmatrix}  \  \Big\vert \  x \in Z_{\omega} \right\}.
\]
Since $g \notin H$, and $H$ is a maximal parabolic subgroup of $\Gamma$, we have $c \neq 0$.
Therefore, $g h_1^m h_2^n \in gH$  has trace $\pm \traceSymTwo$ if and only if
$a + d + c(m + n \omega) = \pm \traceSymTwo$.  Solving for $x = m + n \omega \in Z_{\omega}$, let

\[
y_+ = y_{+1} =  \frac{\traceSymTwo - a - d}{c} \quad \text{and} \quad y_-  = y_{-1} = \frac{-\traceSymTwo - a - d}{c}.
\]
(We will abuse notation slightly by thinking of the subscripts as either symbols$(\pm)$ or numbers $(\pm 1)$, as convenient.)
Then the coset $gH$ contains an element of trace $\pm \traceSymTwo$ if and only if
$\{ y_+, y_- \} \cap Z_{\omega} \neq \emptyset$. 
Let $R \subset k$ be the ring generated by the coefficients of the generators of $\Gamma$. 
By expanding $R$ and $k$, if necessary, we may assume that  $c^{-1} \in R$, which implies $y_{\pm} \in R$. 

For each $i \in \{ \pm  \}$, we will construct a homomorphism $\varphi_i \from \Gamma \rightarrow G_i$, where $G_i$ is a finite group. 
Then we will define
\[
\varphi = \varphi_+ \times \varphi_- \from \Gamma \longrightarrow G = G_+ \times G_-
\]
and show that $\varphi(\gamma)$ is not conjugate into $\varphi(gH$). 

The definition of $\varphi_i \from \Gamma \to G_i$ depends on whether $y_i$ belongs to $Q_\omega =\{ m + n \omega \ \vert \ m, n \in \Q \}$.
Suppose that $y_i \in R \setminus Q_{\omega}$.  Then by \Cref{Prop:AddRingSep}, there exist a finite ring $S_i$ and a ring homomorphism
$\rho_i \from R \rightarrow S_i$ such that $\rho_i(y_i) \notin \rho_i(Z_{\omega})$.  This ring homomorphism induces a group homomorphism
$$\varphi_i \from \Gamma \subset \SL(2,R) \rightarrow G_i = \SL(2,S_i).$$  If there exists $gh_1^m h_2^n \in gH$ of trace $i(a + d + c(m + n \omega))$
such that $\varphi_i(gh_1^m h_2^n)$ is conjugate to $\varphi_i(\gamma)$, then $\rho_i(i(a + d + c(m + n \omega) ))= \rho_i(\traceSymTwo)$. This implies that 
$\rho_i(y_i) \in  \rho(Z_{\omega})$, a contradiction. We conclude that for each element $gh_1^m h_2^n \in gH$ of trace $i(a + d + c(m + n \omega))$, 
$\varphi_i(gh_1^m h_2^n)$ is not conjugate to $\varphi_i(\gamma)$. This completes the definition of $\varphi_i$ and $G_i$ in this case.

Suppose that $y_i \in Q_{\omega}$.
We begin by specifying the choice of Dehn filling quotient $\psi_{\ss_1}$ or $\psi_{\ss_2}$ as in \cite[proof of Claim 4.11]{FHH}. Just as in that argument, we write $y_i = (m_i + n_i \omega)/v_i$ in lowest terms, as in  \Cref{Lem:LinearIndep}. 
If $v_i = 1$, then we set $j = 1$ and work with the Dehn filling $\psi_{\ss_1} \from \Gamma \to \Gamma(\ss_1)$ for concreteness.
Assuming $v_i \neq 1$, we have either $v_i \nmid m_i$ or $v_i \nmid n_i$.   If $v_i \nmid m_i$, then we set $j=2$ and select  the Dehn filling $M(\ss_2)$ and the 
quotient map $\psi_{\ss_2} \from \Gamma \to \Gamma(\ss_2)$.  Then $\psi_{\ss_2}(H)$
is an infinite cyclic loxodromic subgroup of $\Gamma(\ss_2)$ generated by $\psi_{\ss_2}(h_1)$. Consequently, $\psi_{\ss_2}(h_1^m h_2^n) = \psi_{\ss_2}(h_1^m)$.
Similarly, if $v_i \nmid n_i$, then we set $j=1$ and select the Dehn filling 
$M(\ss_1)$ and the quotient map $\psi_{\ss_1} \from \Gamma \to \Gamma(\ss_1)$. This has the effect that $\psi_{\ss_1}(h_1^m h_2^n) = \psi_{\ss_1}(h_2^n)$.
Because the arguments for $m_i$ and $n_i$ are entirely parallel, and differ only by a substitution of symbols, we assume without loss of generality that $v_i \nmid n_i$. Hence $j=1$ and we have the Dehn filling quotient $\psi = \psi_{\ss_1} \from \Gamma \to \Gamma(\ss_1)$.

If $\psi(\gamma)$ is not conjugate into $\psi(gH)$, then by Case (1),
there exist a finite group $G$ and a group homomorphism $\pi \from \Gamma(\ss_1) \rightarrow$ $G$ such that 
$\pi(\psi(\gamma))$ is not conjugate into $\pi(\psi(gH))$. The composition $(\pi \circ \psi) \from \Gamma 
\rightarrow G$ then satisfies the conclusion of the theorem.  Therefore, we may assume that $\psi(\gamma)$
is conjugate to an element  of $\psi(gH)$. 
Let $\{\psi(g) \psi(h_2)^{u_1}, \psi(g) \psi(h_2)^{u_2} \}$ be the set of elements in $\psi(gH)$ that are conjugate to $\psi(\gamma)$ in $\Gamma(\ss_1)$. 
(If there is only one element, then $u_1 = u_2$.)  If $v_i = 1$, then $\trace(gh_1^{m_i} h_2^{n_i}) = \pm \traceSymTwo$, depending on
the trace of $h_1^{m_i} h_2^{n_i}$. By construction, no element of trace $\pm \traceSymTwo$ in $gH$ is sent to an 
element conjugate to $\psi(\gamma)$. Therefore if $v_i = 1$,
then $n_i \notin \{ u_1, u_2 \}$. Recall that if $v_i \neq 1$, then $v_i \nmid n_i$. We conclude that, in either case, 
$v_i u_1 - n_i \neq 0$ and $v_i u_2 - n_i \neq 0$.
By \Cref{Lem:LinearIndep}, 
there is an infinite collection $\PrimeSet$ of primes such that for each prime $p \in \PrimeSet$, there exist a finite field $F_{\pp,i}$ 
of characteristic $p$ and a ring homomorphism $\eta_{p,i}\from R \rightarrow F_{\pp,i}$, such that 
if $\eta_{p,i}(y_i) = \eta_{p,i}(m + n \omega)$ for some $m, n \in \mathbb{Z}$, then $v_i n \equiv n_i \smod p$ and $v_i m \equiv m_i
\smod p$. Choose a prime $p \in \PrimeSet$ such that $p \nmid (v_i u_1 - n_i)$ and $p \nmid (v_i u_2 - n_i)$.
The ring homomorphism $\eta_{p,i}$ induces a congruence quotient $$\nu_i \from \Gamma \subset \SL(2, R) \rightarrow \SL(2, F_{\pp,i}).$$
By \Cref{Prop:loxo lemma},
there exist a finite group $G_p$ and a group homomorphism $\varphi_p:\Gamma(\ss_1) \rightarrow$ $G_p$, such that  
if $\varphi_p(\psi(\gamma))$ is conjugate to $\varphi_p(\psi(gh_2^n))$, for some $n \in \Z$, then $$n \equiv u_1 \ (\mathrm{mod} \ p) \ \ \mathrm{or}
\ \ n \equiv u_2 \ (\mathrm{mod} \ p).$$ 
We can now define $\varphi_i$ in the case where $y_i \in Q_{\omega}$ by
$$G_i :=  \SL(2, F_{\pp,i}) \times G_p \ \mathrm{and} \  
\varphi_i := \nu_i \times (\varphi_p \circ \psi).$$

As promised, we set $$\varphi = \varphi_+ \times \varphi_- \from \Gamma \rightarrow G_+ \times G_-.$$
Suppose there exists $gh_1^m h_2^n \in gH$ such that $\varphi(\gamma)$ is conjugate to $\varphi(gh_1^m h_2^n)$. 
If the trace of $gh_1^m h_2^n$ is equal to  $(i)(a + d + c(m + n \omega))$, then we work with $\varphi_i$.
If $y_i \notin Q_{\omega}$, then we have an immediate contradiction.  
Therefore, we may assume that $y_i \in Q_{\omega}$. Since $\varphi_p(\psi(\gamma))$ is conjugate to $\varphi_p(\psi(gh_2^n))$,
$$n \equiv u_1 \ (\mathrm{mod} \ p) \ \ \mathrm{or}
\ \ n \equiv u_2 \ (\mathrm{mod} \ p).$$ However, since $\nu_i(\gamma)$ is conjugate to $\nu_i(g h_1^m h_2^n)$, we have
$\eta_{p,i}(\traceSymTwo) = \eta_{p,i}(i(a + d + c(m + n \omega)))$, and consequently
$\eta_{p,i}(y_i) = \eta_{p,i}(m + n \omega)$. Therefore, $v_i n \equiv n_i \smod p.$ We conclude that
$$v_i u_1 \equiv n_i \ (\mathrm{mod} \ p) \ \ \mathrm{or}
\ \ v_i u_2 \equiv n_i \ (\mathrm{mod} \ p).$$ a contradiction. 

\bigskip
 
\noindent
\emph{Case 3:} $H$ is parabolic and not maximal.
 
\medskip
Let $K$ be the maximal parabolic subgroup of $\Gamma$ containing $H$. By Case 2, we may assume that $\gamma$ is conjugate into $gK$.
Since we can replace $\gamma$ with any element in its conjugacy class, we may further assume that $\gamma = gh_0$ for some
$h_0 \in K \setminus H$. 

We first consider the case where $g \in K$. Then $\gamma$ and $gH$ are both contained in $K$. 
By \Cref{Prop:not conjugate}, there exists a hyperbolic Dehn filling $M(\ss)$ of $M$, with induced map $\psi_{\ss} \from \Gamma 
\rightarrow \Gamma(\ss)$, such that $\psi_{\ss}(K)$ is a loxodromic subgroup of $\Gamma(\ss)$ and  $\psi_{\ss}(h_0) \notin \psi_{\ss}(H)$. 
If $\psi_{\ss}(\gamma)$ is conjugate into $\psi_{\ss}(gH)$, then by \Cref{Conjugate abelian implies equal},
$\psi_{\ss}(\gamma) =  \psi_{\ss}(gh_0) \in \psi_{\ss}(gH)$.
Therefore, $\psi_{\ss}(h_0) \in \psi_{\ss}(H)$, a contradiction. 
We conclude that $\psi_{\ss}(\gamma)$ is not conjugate into $\psi_{\ss}(gH)$. 
Then by Case 1, there exist a finite group $G$ and a group
homomorphism $\pi \from \Gamma(\ss) \rightarrow G$, such that $\pi(\psi_{\ss}(\gamma))$ is not conjugate into $\pi(\psi_{\ss}(gH))$.  
The composition $(\pi \circ \psi_{\ss}) \from \Gamma \rightarrow G$ then satisfies the conclusion of the theorem.

We next consider the case where $g \notin K$. 
Let $\traceSymTwo$ be the trace of $\gamma$, and let $X$ be the set of all elements in $gK$ with trace $\pm \traceSymTwo$ that are not conjugate to $\gamma$.
By \Cref{prop:max_collisions},  this set is finite. By \Cref{Prop:not conjugate}, there exist generators
$h_1$ and $h_2$ of $K$, representing slopes that can be completed to tuples $\ss_1$ and 
$\ss_2$, such that $M(\ss_1)$ and $M(\ss_2)$ are hyperbolic, 
no element in $\psi_{\ss_j}(X)$  is conjugate to $\psi_{\ss_j}(\gamma)$ in $\Gamma(\ss_j)$, and
$\psi_{\ss_1}(h_0) \notin \psi_{\ss_1}(H)$.
With these fixed generators $h_1$ and $h_2$ for $K$, we adhere to Notation \ref{main_notation}. 
Express $h_0 = h_1^{m_1} h_2^{n_1}$ in terms of these generators. 
If $\psi_{\ss_1}(\gamma)$ is not conjugate into $\psi_{\ss_1}(gH)$, then we are done by Case 1.
Therefore, for the remainder of the proof we assume that $\psi_{\ss_1}(\gamma)$
is conjugate to $\psi_{\ss_1}(gh_1^{m_2}h_2^{n_2})$ for some $h_1^{m_2} h_2^{n_2} \in H$. 

Since $g \notin K$, we have $c \neq 0$.
{For simplicity of notation, we will assume that $\trace(h_0) = 2$. (The argument in the case where $\trace(h_0) = -2$ 
is entirely parallel.)}
Therefore, if $h_1^m h_2^n \in K$, with $\trace(h_1^m h_2^n) = 2$
and $\trace(gh_1^m h_2^n) = \trace(\gamma) = \traceSymTwo$, then $$a + d  + c(m + n \omega) = a + d + c(m_1 + n_1 \omega).$$
This happens if and only if  $$m + n \omega = m_1 + n_1 \omega =: y_+.$$ If $h_1^m h_2^n \in K$, with $\trace(h_1^m h_2^n) = -2$
and $\trace(gh_1^m h_2^n) = \trace(\gamma) = \traceSymTwo$, then $$-(a + d + c(m + n \omega)) = a + d + c(m_1 + n_1 \omega).$$ This 
happens if and only if $$m  + n \omega = -(2a + 2d)/c  - (m_1 + n_1 \omega) =: y_-.$$  

Let $R$ be the coefficient ring of $\Gamma$.
By expanding $R$, if necessary, we may assume that $c^{-1} \in R$. 
As in Case 2, we will construct two homomorphisms, $\varphi_+ \from \Gamma \rightarrow G_+$ and 
$\varphi_- \from \Gamma \rightarrow G_-$, corresponding to trace $\pm 2$ of elements in $H$.
The argument is completed by the following two claims. The first and easier claim applies to 
elements $gh$ with $\trace(h) = \trace(h_0)$, while the second and harder claim applies to elements $gh$ with $\trace(h) = - \trace(h_0)$.

\begin{claim}\label{claim:phiplus}
There exists a homomorphism $\varphi_+ \from \Gamma \rightarrow G_+$ such that $\varphi_+(\gamma)$ is not conjugate to $\varphi_+(gh)$ for any $h\in H$  with $\trace(h)=+2.$
\end{claim}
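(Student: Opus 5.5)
The approach is to exploit the fact that, among elements $gh$ with $\trace(h)=+2$, the trace condition $\trace(gh)=\traceSymTwo$ pins down $h$ uniquely. By the computation above, $gh_1^mh_2^n$ with $\trace(h_1^mh_2^n)=2$ has trace $\traceSymTwo$ exactly when $m+n\omega=y_+=m_1+n_1\omega$. Since $\{1,\omega\}$ is linearly independent over $\Q$, this forces $(m,n)=(m_1,n_1)$, so the only such element of $gK$ is $gh_0=\gamma$ itself. Because $h_0\notin H$, no element $gh$ with $h\in H$ and $\trace(h)=2$ has trace $\traceSymTwo$. What remains is to preserve this failure in a finite quotient. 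The difficulty is that a congruence quotient can only see $h$ through $m+n\omega$ modulo some prime, and it cannot distinguish $H$ from $K$ unless we also control the image of $h_0$ relative to $H$.

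First, I reduce to the trace condition in a finite quotient. If $\varphi_+(\gamma)$ is conjugate to $\varphi_+(gh)$, then their traces agree. So it suffices to build $\varphi_+$ such that, whenever $h=h_1^mh_2^n\in H$ has $\trace(h)=2$, we get $\trace\varphi_+(gh)\ne\trace\varphi_+(\gamma)$. Alternatively, I can combine a trace-detecting map with a map detecting $h\ne h_0$ modulo $H$.

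Concretely, I use residual finiteness of $K/H$ as in \Cref{Lem:ExcludedSubgroupSimple}. This gives a finite cyclic quotient $K\to\Z/N$ that kills $H$ but not $h_0$. Equivalently, there is an integer linear form $\ell(m,n)$ and a modulus $N$ with $\ell(m,n)\equiv 0 \smod N$ on $H$ and $\ell(m_1,n_1)\not\equiv 0 \smod N$.

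Next, I build a ring quotient that recovers $(m,n)$ modulo a prime. I apply \Cref{Lem:LinearIndep} to $\omega$ (which is non-real because $K$ is a rank-two cusp group). This gives infinitely many primes $p$ and maps $\eta_p\from R\to F_\pp$ with $\{1,\eta_p(\omega)\}$ independent over $\Fp$. Here $y_+=m_1+n_1\omega$ has $v=1$. So if $\eta_p(\trace(gh))=\eta_p(\traceSymTwo)$ with $\trace(h)=2$, then $\eta_p(c)\neq0$ (which holds since $c^{-1}\in R$) gives $\eta_p(m+n\omega)=\eta_p(y_+)$, hence $m\equiv m_1$ and $n\equiv n_1 \smod p$.

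This alone does not contradict $h\in H$, because $H$ may contain elements congruent to $h_0$ modulo $p$. So I would choose the prime so that $p\nmid N$ fails in a useful way. The cleanest fix is to choose $p$ and $N$ compatibly: take $N$ to be a power of $p$, or take $p\mid N$, so that congruence modulo $p$ already separates $h_0$ from $H$. I expect this to be the main obstacle, since $K/H$ may be $\Z$ or finite of a fixed order, and the primes in $\PrimeSet$ are not freely choosable.

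If direct compatibility fails, I fall back on the Dehn filling $\psi_{\ss_1}$, which already satisfies $\psi_{\ss_1}(h_0)\notin\psi_{\ss_1}(H)$ with $\psi_{\ss_1}(K)$ cyclic generated by $\psi_{\ss_1}(h_2)$. In that setting, \Cref{Prop:loxo lemma}(b) gives for each $p$ a map $\varphi_p$ on $\Gamma(\ss_1)$ forcing $n\equiv u_1$ or $n\equiv u_2 \smod p$. Here the $u_i$ index the elements $gh_2^{u_i}$ conjugate to $\psi(\gamma)$, one of which is $n_1$. I then set $\varphi_+=\nu\times(\varphi_p\circ\psi_{\ss_1})$, with $\nu$ induced by $\eta_p$. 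Any $h=h_1^mh_2^n\in H$ with $\trace(h)=2$ and $\varphi_+(gh)$ conjugate to $\varphi_+(\gamma)$ then has $n\equiv n_1 \smod p$. Using that $\psi_{\ss_1}(h)\ne\psi_{\ss_1}(h_0)$ for all $h\in H$, I choose $p$ large to avoid the finitely many bad residues. I expect this to yield the contradiction exactly as in Case 2.
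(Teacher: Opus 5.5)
\textbf{There is a genuine gap.} Your fallback uses the right ingredients: the filling $\ss_1$, the congruence quotient $\nu_p$ from \Cref{Lem:LinearIndep} applied to $y_+=m_1+n_1\omega$, and a map from \Cref{Prop:loxo lemma}(b) on $\Gamma(\ss_1)$. As written, however, it never reaches a contradiction.

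Apply \Cref{Prop:loxo lemma}(b) to the coset $\psi_{\ss_1}(gK)$ with modulus $p$. This gives $n\equiv n_1$ or $n\equiv n_2 \pmod p$, and $\nu_p$ gives $n\equiv n_1\pmod p$. Choosing $p\nmid(n_1-n_2)$ kills the $n_2$ alternative. The $n_1$ alternative, though, is exactly what $\nu_p$ already says, so nothing is contradicted.

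After filling, the only thing separating elements of $H$ from $h_0$ is that $\psi_{\ss_1}(H)=\langle\psi_{\ss_1}(h_2)^r\rangle$. Hence $r\mid n$ whenever $h_1^mh_2^n\in H$, while $r\nmid n_1$. A congruence modulo a large prime $p$ (hence coprime to $r$) cannot detect this, since there are infinitely many $n\in r\Z$ with $n\equiv n_1\pmod p$. So there is no finite set of ``bad residues'' for a large $p$ to avoid. The fact that $\psi_{\ss_1}(h)\neq\psi_{\ss_1}(h_0)$ for $h\in H$ is never turned into a usable congruence.

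\textbf{The missing idea is to build $r$ into the modulus.} The paper applies \Cref{Prop:loxo lemma}(b) with modulus $rp$. Then:
\begin{itemize}
\item $n\equiv n_1\pmod{rp}$ forces $n\equiv n_1\pmod r$, contradicting $r\mid n$ and $r\nmid n_1$;
\item $n\equiv n_2\pmod{rp}$ forces $n\equiv n_2\pmod p$, contradicting $\nu_p$ once $p\nmid(n_1-n_2)$.
\end{itemize}
The single combined modulus is essential. Separate maps with moduli $r$ and $p$ would leave the mixed case $n\equiv n_2\pmod r$, $n\equiv n_1\pmod p$ alive.

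Equivalently, you could apply \Cref{Prop:loxo lemma}(b) with modulus $p$ to the coset $\psi_{\ss_1}(g)\psi_{\ss_1}(H)$ of the loxodromic subgroup $\psi_{\ss_1}(H)$. The only element of that coset conjugate to $\psi_{\ss_1}(\gamma)$ is $\psi_{\ss_1}(gh_2^{n_2})$.

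You also need $n_1\neq n_2$. The paper gets this from two facts:
\begin{itemize}
\item the standing reduction that $\psi_{\ss_1}(\gamma)$ is conjugate to some $\psi_{\ss_1}(gh_1^{m_2}h_2^{n_2})$ with $h_1^{m_2}h_2^{n_2}\in H$ (otherwise Case 1 finishes);
\item $\psi_{\ss_1}(\gamma)\notin\psi_{\ss_1}(gH)$.
\end{itemize}

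Your first strategy was rightly set aside. A cyclic quotient of $K/H$ does not extend to $\Gamma$. Moreover, when $h_0$ has finite order in $K/H$, you would need a prime of $\PrimeSet$ dividing that order, and none need exist.
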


\begin{claim}\label{claim:phiminus}

There exists a homomorphism $\varphi_- \from \Gamma \rightarrow G_-$ such that $\varphi_-(\gamma)$ is not conjugate to $\varphi_-(gh)$ for any $h\in H$  with $\trace(h)=-2.$
\end{claim}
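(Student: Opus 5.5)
The plan is to mimic the construction of $\varphi_i$ in Case 2, now with $i=-$. The target is the single value $y_- = -(2a+2d)/c - (m_1+n_1\omega) \in R$. Recall that $c^{-1}\in R$, and that an element $gh_1^mh_2^n$ with $\trace(h_1^mh_2^n)=-2$ has trace $\traceSymTwo$ exactly when $m+n\omega = y_-$. I would split according to whether $y_- \in Q_\omega$.

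\emph{Case $y_-\notin Q_\omega$.} Apply \Cref{Prop:AddRingSep} to obtain a finite ring $S_-$ and a ring homomorphism $\rho_-\from R\to S_-$ with $\rho_-(y_-)\notin\rho_-(Z_\omega)$. Let $\varphi_-\from\Gamma\to\SL(2,S_-)$ be the induced map. Suppose $\varphi_-(\gamma)$ were conjugate to $\varphi_-(gh_1^mh_2^n)$ with $\trace(h_1^mh_2^n)=-2$. Comparing traces gives $\rho_-(-(a+d+c(m+n\omega)))=\rho_-(\traceSymTwo)$. Since $c$ is invertible in $R$, this yields $\rho_-(y_-)=\rho_-(m+n\omega)\in\rho_-(Z_\omega)$, a contradiction.

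\emph{Case $y_-\in Q_\omega$.} Write $y_-=(m_-+n_-\omega)/v_-$ in lowest terms. As in Case 2, choose the filling according to which of $m_-,n_-$ is not divisible by $v_-$ (taking $j=1$ when $v_-=1$); by symmetry assume $j=1$, and set $\psi=\psi_{\ss_1}$. Then $\psi(K)=\langle\psi(h_2)\rangle$ is loxodromic and $\psi(gh_1^mh_2^n)=\psi(g)\psi(h_2)^n$.
\begin{enumerate}[\:\:(a)]
\item If $\psi(\gamma)$ is not conjugate into $\psi(gK)$, then Case 1 (or conjugacy separability, if $\psi(H)$ happens to be trivial) yields the required quotient directly.
\item Otherwise, let $\psi(g)\psi(h_2)^{u_1},\psi(g)\psi(h_2)^{u_2}$ be the elements of $\psi(gK)$ conjugate to $\psi(\gamma)$. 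Choose $p\in\PrimeSet$ from \Cref{Lem:LinearIndep} with $p\nmid v_-u_k-n_-$ for $k=1,2$, and take $\varphi_p$ from \Cref{Prop:loxo lemma}(b) applied in $\Gamma(\ss_1)$ with modulus $p$.
\item Set $\varphi_-=\nu_-\times(\varphi_p\circ\psi)$, where $\nu_-$ is the congruence map induced by $\eta_{p,-}$.
\item Now suppose $\varphi_-(\gamma)$ were conjugate to $\varphi_-(gh_1^mh_2^n)$ with $\trace(h_1^mh_2^n)=-2$. The trace computation through $\eta_{p,-}$ gives $v_-n\equiv n_-\smod p$, while \Cref{Prop:loxo lemma} gives $n\equiv u_k\smod p$ for some $k$. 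Together these give $p\mid v_-u_k-n_-$, contradicting the choice of $p$.
\end{enumerate}

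Everything therefore hinges on the nonvanishing $v_-u_k-n_-\neq 0$. When $v_-\ne1$ this is automatic, since $v_-\nmid n_-$. When $v_-=1$, the element $h_-:=h_1^{m_-}h_2^{n_-}\in K$ satisfies $\trace(gh_-)=\traceSymTwo$. If $h_-\in H$, then $gh_-$ is not conjugate to $\gamma$, by the standing hypothesis that $\gamma$ is not conjugate into $gH$. Hence $gh_-\in X$, and by the choice of $\ss_1$ its image is not conjugate to $\psi(\gamma)$, so $n_-\notin\{u_1,u_2\}$.

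I expect the main obstacle to be the remaining subcase: $v_-=1$ but $h_-\in K\setminus H$. Here $gh_-$ may well be conjugate to $\gamma$, so $n_-=u_k$ cannot be ruled out. However, only elements $gh$ with $h\in H$ need to be excluded. My first attempt would be to return to \Cref{Prop:not conjugate} and \Cref{Lem:ExcludedSubgroupSimple}. I would use the same finite cyclic quotient trick to choose the filling slope on $K$ inside a finite-index subgroup $\ker(\varphi)\supset H$ that excludes both $h_0$ and $h_-$ (and hence, after refining, excludes $h_-h^{-1}$ for relevant $h$), so that $\psi(h_-)\notin\psi(H)$. Then I would repeat the argument above with \Cref{Prop:loxo lemma} applied to the coset $\psi(g)\psi(H)$ rather than $\psi(gK)$. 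In that coset, $\psi(g)\psi(h_-)$ is absent, and the exponents $u_k$ are measured relative to a generator of the cyclic group $\psi(H)$. This requires translating the congruence from \Cref{Lem:LinearIndep} into that generator's exponent, which I expect to be the delicate bookkeeping step.
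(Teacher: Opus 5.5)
Your cases $y_-\notin Q_\omega$ and $y_-\in Q_\omega\setminus Z_\omega$ are correct. The latter is a genuine simplification of the paper's Case 2: you choose one filling according to which of $m_-,n_-$ is not divisible by $v_-$, instead of using both fillings with four conditions on $p$. For $y_-\in Z_\omega$, your argument is also correct whenever $\sigma:=gh_1^{m_-}h_2^{n_-}$ is not conjugate to $\gamma$, and there it replaces the paper's $r$-trick (Case 3) by the single condition $p\nmid(u_k-n_-)$. The right dividing line, however, is conjugacy of $\sigma$ with $\gamma$, not whether $h_-\in H$. Note that $\trace(\sigma)$ equals $\tau$ or $-\tau$ according as $\trace(h_-)=-2$ or $+2$, so your claim $\trace(\sigma)=\tau$ is not always right. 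Whenever $\sigma$ is not conjugate to $\gamma$, we have $\sigma\in X$, so $n_-\notin\{u_1,u_2\}$ and your argument goes through.

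The gap is the remaining case: $y_-\in Z_\omega$ with $\sigma$ conjugate to $\gamma$, so $h_-\in K\setminus H$. This is the paper's Case 4, the only hard one, and you leave it as a tentative plan with the decisive step undone. The missing idea is control over \emph{all} elements of $\psi(gK)$ conjugate to $\psi(\gamma)$ in the new filling. A long filling can create new conjugates whose exponents are not known in advance, so excluding $h_-$ from $\psi(H)$ is not enough by itself.

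The paper supplies this control as follows. First, $\gamma\neq\sigma$, since otherwise $\trace(\gamma)=0$. Next, it takes $\tt$ from \Cref{Prop:not conjugate} with $\psi_\tt(h_-)\notin\psi_\tt(H)$, long enough that $\psi_\tt(\gamma)\neq\psi_\tt(\sigma)$. By \Cref{prop:max_collisions}, these two are then the \emph{only} elements of $\psi_\tt(gK)$ conjugate to $\psi_\tt(\gamma)$. It then measures exponents against a generator $\delta$ of $\psi_\tt(K)$, not of $\psi_\tt(H)$. Write $\psi_\tt(\gamma)=\psi_\tt(g)\delta^{e_\gamma}$, $\psi_\tt(\sigma)=\psi_\tt(g)\delta^{e_\sigma}$ and $\psi_\tt(H)=\langle\delta^u\rangle$, so $u\nmid e_\sigma$. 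Apply \Cref{Prop:loxo lemma}(b) with modulus $pu$, where $p\in\PrimeSet$ and $p\nmid(e_\gamma-e_\sigma)$. Divisibility by $u$ rules out $e_\sigma$. The map $\nu_p$ forces $(m,n)\equiv(m_-,n_-)$, hence exponent $\equiv e_\sigma \smod p$, which rules out $e_\gamma$.

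Your own idea of excluding both $h_0$ and $h_-$ can also be completed, and more simply. A cyclic quotient of $K/H$ nonzero on both cosets exists: pass to a finite quotient and use that its character group is not a union of two proper subgroups. Then, once $\psi(\gamma)\neq\psi(\sigma)$, the two-collision bound shows $\psi(\gamma)$ is not conjugate into $\psi(gH)$ at all, and \Cref{Prop:loxo lemma}(a) finishes with no congruence bookkeeping. As written, though, your proposal contains neither $\gamma\neq\sigma$ nor the two-collision argument, so this case is unproved.
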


Assuming the claims, define $\varphi = \varphi_+ \times \varphi_- \from \Gamma \rightarrow G_+ \times G_-$. 
Suppose there exists $gh \in gH$ such that $\varphi(\gamma)$ is conjugate to $\varphi(gh)$. 
If $\trace(h) = 2$, then Claim \ref{claim:phiplus} provides a contradiction.
If $\trace(h) = -2$, then Claim \ref{claim:phiminus} provides a contradiction.
 
 This completes the proof of the theorem modulo the claims.
 \end{proof}
 
 \begin{proof}[Proof of Claim \ref{claim:phiplus}] 
The definition of $\varphi_+$ will depend on $y_+ = m_1 + n_1 \omega$. 

We work with the Dehn filling $\ss_1$. 
Since $\psi_{\ss_1}(h_0) \notin \psi_{\ss_1}(H)$, it follows that $\psi_{\ss_1}(\gamma) \notin \psi_{\ss_1}(gH)$.
Choose $r \in \N$ such that $\psi_{\ss_1}(H) = \langle (\psi_{\ss_1}(h_2))^{r} \rangle$. Since $\psi_{\ss_1}(h_0) = \psi_{\ss_1}(h_2)^{n_1} 
\not\in \psi_{\ss_1}(H)$, we have that $r \nmid n_1$. 
Recall we have assumed that $\psi_{\ss_1}(\gamma)$
is conjugate to $\psi_{\ss_1}(gh_1^{m_2}h_2^{n_2})$ for some $h_1^{m_2} h_2^{n_2} \in H$. 
Since $\psi_{\ss_1}(\gamma)  = \psi_{\ss_1}(gh_2^{n_1}) \notin \psi_{\ss_1}(gH)$ and 
$ \psi_{\ss_1}(g h_1^{m_2} h_2^{n_2}) = \psi_{\ss_1}(g h_2^{n_2}) \in \psi_{\ss_1}(gH)$, we have that $n_1 \neq n_2$. By \Cref{prop:max_collisions}, 
$\psi_{\ss_1}(gh_2^{n_1})$ and $\psi_{\ss_1}(gh_2^{n_2})$ are the unique elements in $\psi_{\ss_1}(gK)$ that are conjugate to 
$\psi_{\ss_1}(\gamma)$ in $\Gamma(\ss_1)$. 

Let $\PrimeSet$ be the infinite set of primes guaranteed by \Cref{Lem:LinearIndep} applied to $y_{\ast} = y_+$. 
For each prime $p \in \PrimeSet$, there exist a finite field $F_{\pp}$ of characteristic $p$ and a congruence quotient 
$\nu_p: \Gamma \rightarrow \SL(2, F_{\pp})$ such that if $\nu_p(\gamma)$ is conjugate to
$\nu_p(gh_1^m h_2^n)$ for some $gh_1^m h_2^n \in gK$ with $\trace(h_1^m h_2^n) = 2$, then 
$m \equiv m_1 \smod p$ and $n \equiv n_1 \smod p$. By choosing a sufficiently large $p \in \PrimeSet$, we ensure that 
 $p \nmid (n_1 - n_2)$. 
 
 By \Cref{Prop:loxo lemma},
there exist a finite group $G_{rp}$ and a group homomorphism $\varphi_{rp} \from \Gamma(\ss_1) \rightarrow$ $G_{rp}$ such that, if
$\varphi_{rp}(\psi_{\ss_1}(\gamma))$ is conjugate to $\varphi_{rp}(\psi_{\ss_1}(gh_1^m h_2^n)) = \varphi_{rp}(\psi_{\ss_1}(g h_2^n))$, then 
$$n \equiv n_1 \smod {rp} \ \ \mathrm{or} \ \ n \equiv n_2 \smod {rp}.$$
Let $$\varphi_+ := (\varphi_{rp} \circ \psi_{\ss_1}) \times \nu_p \from \Gamma \rightarrow G_{rp} \times \SL(2, F_{\pp}).$$  
Suppose that $\varphi_+(\gamma)$ is conjugate to
$\varphi_+(gh_1^m h_2^n)$, for some $g h_1^m h_2^n \in gH$ with $\trace(h_1^m h_2^n) = 2$. Since $h_1^m h_2^n \in H$, we observe $r \mid n$.
If $n \equiv n_1 \smod {rp}$, then $n \equiv n_1 \smod r$. 
We  conclude that $r \mid n_1$, a contradiction. Therefore, we consider $n\equiv n_2 \smod{rp}$, which implies that $n \equiv n_2 \smod{p}$. 
 However, since $\nu_p(\gamma)$ is conjugate to
$\nu_p(gh_1^m h_2^n)$, we have $n \equiv n_1$ (mod $p$).
Therefore, $n_1 \equiv n_2$ (mod $p$), a contradiction.
\end{proof}

\begin{proof}[Proof of Claim \ref{claim:phiminus}]
The definition of $\varphi_-$ will depend on $y_- = -(2a + 2d)/c - (m_1 + n_1 \omega)$. 

We will consider four cases. 

\medskip

\noindent
\emph{Case 1:} $y_- \in R \setminus Q_{\omega}$
 
 \medskip
 
By \Cref{Prop:AddRingSep}, there exist a finite ring $B$
and a ring homomorphism $\rho \from R \rightarrow B$ such that
$\rho(y_-) \notin \rho(Z_{\omega})$. This induces 
$\varphi_- \from \Gamma \subset \SL(2, R) \rightarrow \SL(2,B)$. Suppose that $\varphi_-(\gamma)$ is conjugate to
$\varphi_-(gh_1^m h_2^n)$, for some $g h_1^m h_2^n \in gH$ with $\trace(h_1^m h_2^n) = -2$. Then 
$\eta(y_-) =  \eta(m + n \omega)$, a contradiction. 

\medskip

Before stating the subsequent cases, we assume that $y_- \in Q_{\omega}$. Therefore, we may write
 $$y_- = {\frac{m_{\ast} +  n_{\ast} \omega}{v_{\ast}}}$$ in lowest terms, as in \Cref{Lem:LinearIndep}.
 
Let $\PrimeSet$ be the infinite set of primes guaranteed by \Cref{Lem:LinearIndep} applied to $y_{\ast} = y_-$. 
For each prime $p \in \PrimeSet$, there exist a finite field $F_{\pp}$ of characteristic $p$ and a congruence quotient 
$\nu_p: \Gamma \rightarrow \SL(2, F_{\pp})$ such that if $\nu_p(\gamma)$ is conjugate to
$\nu_p(gh_1^m h_2^n)$ for some $gh_1^m h_2^n \in gK$ with $\trace(h_1^m h_2^n) = -2$, then 
$v_* m \equiv m_* \smod p$ and $v_* n \equiv n_* \smod p$.

 \medskip
 \noindent
\emph{Case 2:} $y_- \in Q_{\omega} \setminus Z_{\omega}$ 

\medskip

By \Cref{Prop:loxo lemma}, there exist $n^{\prime}, m^{\prime} \in \Z$ 
with the following property.  For every prime $p$, there exist finite groups $G_{p,j}$ and homomorphisms $\varphi_{p,j} \from \Gamma(\ss_j)  \to G_{p,j}$
such that  if $(\varphi_{p,1} \circ \psi_{\ss_1})(\gamma)$ is conjugate to $(\varphi_{p,1} \circ \psi_{\ss_1})(gh_1^m h_2^n)$, for some $m, n \in \Z$,
 then $$n \equiv n_1 \ (\mathrm{mod} \ p) \ \ \mathrm{or}
\ \ n \equiv n^{\prime} \ (\mathrm{mod} \ p).$$  Moreover, if $(\varphi_{p,2} \circ \psi_{\ss_2})(\gamma)$ is conjugate to $(\varphi_{p,2} \circ \psi_{\ss_2})(gh_1^m h_2^n)$, 
for some $m, n \in \Z$, then $$m \equiv m_1 \ (\mathrm{mod} \ p) \ \ \mathrm{or}\ \ m \equiv m^{\prime} \ (\mathrm{mod} \ p).$$
The integers $n_1$ and $m_1$ are the exponents of $\psi_{\ss_1}(\gamma)$ and $\psi_{\ss_2}(\gamma)$, respectively.
If two distinct elements in $\psi_{\ss_j}(gK)$ are conjugate to $\psi_{\ss_j}(\gamma)$,
then $n^{\prime}$ and $m^{\prime}$ are the exponents of the second elements  in $\psi_{\ss_1}(gK)$ and $\psi_{\ss_2}(gK)$, respectively.

Since $y_-$ is in lowest terms and $v_* \ne 1$, the system of equations
 $$v_{\ast} m = m_{\ast} \ \ \ v_{\ast} n = n_{\ast}$$ has no solution $(m, n) \in \Z^2$.  In particular, $(m_1,n_1)$, $(m_1,n^{\prime})$, $(m^{\prime},n_1)$, and $(m^{\prime},n^{\prime})$ are not solutions. 
Therefore, we can choose a prime $p \in \PrimeSet$ that satisfies all of the following conditions:
\begin{eqnarray*}
 p \nmid (v_{\ast} m_1 - m_{\ast}) \ & \text{or} \quad p \nmid (v_{\ast} n_1 - n_{\ast}) \\
 p \nmid (v_{\ast} m_1 - m_{\ast}) \ & \text{or} \quad p \nmid (v_{\ast} n^{\prime} - n_{\ast}) \\
 p \nmid (v_{\ast} m^{\prime} - m_{\ast}) \ & \text{or} \quad p \nmid (v_{\ast} n_1 - n_{\ast}) \\
 p \nmid (v_{\ast} m^{\prime} - m_{\ast}) \ & \text{or} \quad p \nmid (v_{\ast} n^{\prime} - n_{\ast})
\end{eqnarray*}

Let $\varphi_- := (\varphi_{p,1} \circ \psi_{\ss_1}) \times (\varphi_{p,2} \circ \psi_{\ss_2}) \times \nu_p.$
Suppose for the sake of contradiction that $\varphi_-(\gamma)$ is conjugate to
$\varphi_-(gh_1^m h_2^n)$, for some $g h_1^m h_2^n \in gH$ with $\trace(h_1^m h_2^n) = -2$.
Since $\nu_p(\gamma)$ is conjugate to $\nu_p(gh_1^m h_2^n)$, $$v_{\ast} m \equiv m_{\ast} \smod p \ \ \mathrm{and} \ \ 
v_{\ast} n \equiv n_* \smod p.$$ Since $((\varphi_{p,1} \circ \psi_{\ss_1}) \times (\varphi_{p,2} \circ \psi_{\ss_2}))(\gamma)$
is conjugate to $((\varphi_{p,1} \circ \psi_{\ss_1}) \times (\varphi_{p,2} \circ \psi_{\ss_2}))(gh_1^m h_2^n)$,
 $$m \equiv m_1 \ \mathrm{or} \ m^{\prime}  \smod p \ \ \mathrm{and} \ \ n \equiv n_1 \ \mathrm{or}  \  n^{\prime} \smod p.$$
 This contradicts our choice of $p$.

\medskip

In the remaining cases, $y_- \in \Z_{\omega}$. Consequently, we have $v_{\ast} = 1$.

\medskip
\noindent
\emph{Case 3:} $y_- \in Z_{\omega}$, and $\gamma$ is not conjugate to $\sigma = gh_1^{m_{\ast}} h_2^{n_{\ast}}$ 

\medskip

Just as in the proof of Claim \ref{claim:phiplus}, we work with the Dehn filling $\ss_1$. 
By that proof,
we have that $n_1 \neq n_2$ and, therefore, 
$\psi_{\ss_1}(gh_2^{n_1})$ and $\psi_{\ss_1}(gh_2^{n_2})$ are the unique elements in $\psi_{\ss_1}(gK)$ that are conjugate to 
$\psi_{\ss_1}(\gamma)$ in $\Gamma(\ss_1)$. 

Note that $\trace(\sigma) = \pm \traceSymTwo$, depending on the trace of $h_1^{m_{\ast}} h_2^{n_{\ast}}$. 
Therefore, by construction, $\psi_{\ss_1}(\sigma)$ is not conjugate to $\psi_{\ss_1}(\gamma)$. It follows that $n_2 \neq n_{\ast}$.  
Choose a prime $p \in \PrimeSet$ such that $p \nmid (n_2 - n_{\ast})$.
Let $r$ be defined as in the proof of Claim \ref{claim:phiplus} and recall that $r \nmid n_1$.
By \Cref{Prop:loxo lemma}
there exist a finite group $G_{rp}$ and a group homomorphism $\varphi_{rp} \from \Gamma(\ss_1) \rightarrow$ $G_{rp}$ such that, if
$\varphi_{rp}(\psi_{\ss_1}(\gamma))$ is conjugate to $\varphi_{rp}(\psi_{\ss_1}(gh_1^m h_2^n)) = \varphi_{rp}(\psi_{\ss_1}(g h_2^n))$, then 
$$n \equiv n_1 \smod {rp} \ \ \mathrm{or} \ \ n \equiv n_2 \smod {rp}.$$
Let $$\varphi _- := (\varphi_{rp} \circ \psi_{\ss_1}) \times \nu_p \from \Gamma \rightarrow G_{rp} \times \SL(2, F_{\pp}).$$
Suppose for the sake of contradiction that  $\varphi_-(\gamma)$ is conjugate to
$\varphi_-(gh_1^m h_2^n)$, for some $g h_1^m h_2^n \in gH$ with $\trace(h_1^m h_2^n) = -2$.
If $n \equiv n_1 \smod {rp}$, then $n \equiv n_1 \smod r$. 
Since $r \mid n$, we conclude that $r \mid n_1$, a contradiction. 
If $n \equiv n_2 \smod {rp}$, then $n \equiv n_2 \smod p$. 
However, since $\nu_p(\gamma)$ is conjugate to
$\nu_p(gh_1^m h_2^n)$, we have that $n \equiv n_* \smod p$.
 Therefore, $n_2 \equiv n_{\ast}$ (mod $p$), a contradiction.
 
 \bigskip
 \noindent
 \emph{Case 4:} $y_- \in Z_{\omega}$ and $\gamma$ is conjugate to $\sigma = gh_1^{m_{\ast}} h_2^{n_{\ast}}$ 
 
 \medskip
 
In this case, since $\gamma$ is not conjugate into $gH$, we have that 
 $h_1^{m_{\ast}} h_2^{n_{\ast}} \in K \setminus H$.  By definition, $$y_- = -(2a + 2d)/c - (m_1 + n_1 \omega) = m_{\ast} + n_{\ast} \omega.$$
 If $m_1 + n_1 \omega = m_{\ast} + n_{\ast} \omega$, then $\trace(\gamma) = a + d + c(m_1 + n_1 \omega) = 0$, a contradiction.  We conclude that 
 $\gamma \neq \sigma$. By \Cref{Prop:not conjugate}, there exists a 
hyperbolic Dehn filling $M(\tt)$ of $M$, with induced homomorphism $\psi_{\tt} \from \Gamma 
\rightarrow \Gamma(\tt)$, such that $\psi_{\tt}(K)$ is a loxodromic subgroup of $\Gamma(\tt)$ and
$\psi_{\tt}(h_1^{m_{\ast}} h_2^{n_{\ast}}) \notin \psi_{\tt}(H)$. 
By choosing $\tt$ to be sufficiently long, we may assume that $\psi_{\tt}(\gamma) \neq \psi_{\tt}(\sigma)$. 
Let $\delta$ be a generator of $\psi_{\tt}(K )$, and write $\psi_{\tt}(h_1) = \delta^{-r}$ and $\psi_{\tt}(h_2) = \delta^s$,
for some $r,s \in \Z$. Then $\psi_{\tt}(\gamma) = \psi_{\tt}(g) \delta^{-m_1 r + n_1 s}$ and $\psi_{\tt}(\sigma)
= \psi_{\tt}(g) \delta^{- m_{\ast} r + n_{\ast} s}$. By our choice of $\tt$, $-m_1 r+ n_1 s \neq - m_{\ast} r + n_{\ast} s$.
Fix $p \in \PrimeSet$ such that $p \nmid ((-m_1 r + n_1 s )- (- m_{\ast} r + n_{\ast} s))$. Write
$\psi_{\tt}(H) = \langle \delta^u \rangle$, $u \in \Z$. Since $\psi_{\tt}(h_1^{m_{\ast}} h_2^{n_{\ast}}) \notin \psi_{\tt}(H)$, 
we have $u \nmid  (- m_{\ast} r + n_{\ast} s)$. 
As $\psi_{\tt}(\sigma)$ and $\psi_{\tt}(\gamma)$ are conjugate and distinct, \Cref{prop:max_collisions} implies they are the unique elements in $\psi_{\tt}(gK)$ 
that are conjugate to $\psi_{\tt}(\gamma)$ in $\Gamma(\tt)$. 
By \Cref{Prop:loxo lemma}
there exist a finite group $G_{pu}$ and a homomorphism $\varphi_{pu} \from \Gamma(\tt) \rightarrow$ $G_{pu}$ such that, if
$\varphi_{pu}(\psi_{\tt}(\gamma))$ is conjugate to $\varphi_{pu}(\psi_{\tt}(gh_1^m h_2^n)) = \varphi_{pu}(\psi_{\tt}(g) \delta^{-mr + ns})$, then 
$$-mr + ns  \equiv -m_1 r+ n_1 s \smod {pu} \ \ \mathrm{or} \  -mr + ns \equiv -m_{\ast} r + n_{\ast} s \smod {pu}.$$
Let $$\varphi_- := (\varphi_{pu} \circ \psi_{\tt}) \times \nu_p \from \Gamma \rightarrow G_{pu} \times \SL(2, F_{\pp}).$$
Suppose for the sake of contradiction $\varphi_-(\gamma)$ is conjugate to
$\varphi_-(gh_1^m h_2^n)$, for some $g h_1^m h_2^n \in gH$ with $\trace(h_1^m h_2^n) = -2$.
Since $h_1^m h_2^n \in H$, $u \mid (-m r + n s)$. If
$$-mr + ns \equiv -m_{\ast} r + n_{\ast} s \smod{pu}, \ \ \mathrm{then} \ \ -mr + ns \equiv -m_{\ast} r+ n_{\ast} s \smod u.$$
We conclude that $u \mid (- m_{\ast} r + n_{\ast} s)$, a contradiction. 
If $$-m r + n s \equiv -m_1 r + n_1 s \smod {pu}, \ \ \mathrm{then} \ \ -m r + n s \equiv -m_1 r + n_1 s \smod {p}.$$
Since $\nu_p(\gamma)$ is conjugate to $\nu_p(gh_1^m h_2^n)$, $$m \equiv m_{\ast} \smod p \ \ \mathrm{and} \ \ 
n \equiv n_* \smod p.$$ Therefore, $$- m_{\ast} r + n_{\ast} s \equiv -m_1 r + n_1 s \smod p,$$ a contradiction.

This completes the definition of $\varphi_-$ over the four subcases and concludes the proof of the claim.
\end{proof}

We conclude this section with the proof that a coset of a loxodromic subgroup is conjugacy distinguished
from the class of maximal parabolic subgroups of $\pi_1(M)$. From this point forward, we will no longer rely on the specific conjugations set by Notation \ref{main_notation}.

\begin{named}{Theorem~\ref{Theorem:Separability}}
Let $M = {\H}^3 / \Gamma$ be a hyperbolic $3$-manifold of finite volume. Let $H = \langle h \rangle$ be a 
loxodromic subgroup of $\Gamma$ and let $K$ be a maximal parabolic subgroup of $\Gamma$. 
Let $g \in \Gamma$ be an element such that $K$ is disjoint from every conjugate of $gH$. 
Then there exists a homomorphism $\varphi\from \Gamma \rightarrow G$, where $G$ is a finite group, such that
$\varphi(K)$ is disjoint from every conjugate of $\varphi(gH)$.
\end{named}

\begin{proof}  
By assumption, $g \notin H$. We will consider two cases.
\medskip

\noindent
\emph{Case 1:} The coset $gH$ does not contain a parabolic element. 

\medskip

In this case, the coset $gH$ does not contain an element of trace $\pm 2$.
By \Cref{Prop:trace lemma}, there exist a finite group $G_1$ and a group homomorphism $\varphi_1 \from \Gamma \rightarrow G_1$ such that
if $\gamma \in \Gamma$ with $\trace(\gamma) = 2$, then $\varphi_1(\gamma)$ is not conjugate into $\varphi_1(gH)$. Similarly,
there exist a finite group $G_2$ and a group homomorphism $\varphi_2 \from \Gamma \rightarrow G_2$ such that
if $\gamma \in \Gamma$ with $\trace(\gamma) = -2$, then $\varphi_2(\gamma)$ is not conjugate into $\varphi_2(gH)$.
Consider $\varphi = \varphi_1 \times \varphi_2 \from  \Gamma \rightarrow G_1 \times G_2$. 
Suppose there exists $gh^n \in gH$ such that $\varphi(gh^n)$ is conjugate into $\varphi(K)$. Then there exists an element $\gamma \in K$ such that
$\varphi_1(gh^n)$ is conjugate to $\varphi_1(\gamma)$, and $\varphi_2(gh^n)$ is conjugate to $\varphi_2(\gamma)$.  This contradicts the fact that $\trace(\gamma) \in \{ 2, -2 \}$. 

\medskip

\noindent
\emph{Case 2:}  The coset $gH$ contains a parabolic element. 

\medskip

In this case, since $K$ is maximal and $gH$ is disjoint from every conjugate of $K$, there must be at least two cusps in $M$. 
Let $A$ be the cusp of $M$ corresponding to $K$, and let $C_1, \dots, C_\ell$ be the remaining cusps. 
We will leave the cusp $A$ unfilled, and will fill the remaining cusps. 
By Thurston's hyperbolic Dehn surgery theorem, so long as a tuple of slopes 
$\ss$ on $C_1, \dots, C_\ell$ avoids finitely many slopes on each cusp, 
the Dehn filled manifold $M(\ss)$ will be hyperbolic. 

{For each such tuple $\ss$, the fundamental group $\pi_1(M(\ss))$ has a discrete, faithful representation to a group of isometries 
$\Gamma(\ss) \subset \PSL(2,\C)$. By \Cref{Thm:ThurstonLiftConjugate}, we view $\Gamma({\bf s})$ as a subgroup of $\SL(2, \mathbb{C})$.}
By choosing a sufficiently long Dehn filling $\ss$ with associated homomorphism $\psi_\ss  \from \Gamma \rightarrow \Gamma(\ss)$, we will ensure that:
\begin{enumerate}[\:\:(1)]
\item $\psi_\ss(H)$ is a loxodromic subgroup; 
\item {$\psi_\ss$ is injective on the maximal loxodromic subgroup $H'$ containing $H$;}
\item $\psi_\ss( g) \not \in \psi_\ss(H)$;  and
\item $\psi_\ss(gH)$ does not contain 
any parabolic elements.
\end{enumerate}

The first two conditions can be checked together. Let $h'$ be a generator for $H'$.
For any sufficiently long Dehn filling, $\trace(\psi_{\ss}(h'))$ is arbitrarily close to $\trace(h')$. This ensures that $\psi_\ss(h')$ is loxodromic, and therefore that $\psi_\ss$ is injective on $H' = \langle h' \rangle$.

Now, we check the third condition.
If $g \not\in H^\prime$, then $[g,h]$ is non-trivial in $\Gamma$. Thus, for any sufficiently long Dehn filling slope, $\psi_{\ss}([g,h])$ is non-trivial, which implies that $\psi_\ss( g) \not \in \psi_\ss(H)$.
If $g \in H^\prime$, then $gH \subset H^\prime$. 
Since $\psi_\ss$ is injective on $H'$, this implies that $\psi_\ss( g) \not \in \psi_\ss(H)$.

 To see the fourth condition, note that the image of any parabolic element in $gH$ will be loxodromic, since the only parabolic
elements of $\Gamma$ that remain parabolic after the Dehn filling lie in conjugates of $K$, and the coset $gH$ is disjoint from every conjugate of $K$. 
Let $S$ be the set of loxodromic elements of $gH$.  Each element of $S$ stabilizes an axis in ${\mathbb H}^3$, whose quotient is a closed geodesic in $M$. 
By   \Cref{Lem:AxesInNeighborhood}, the union of all closed geodesics corresponding to the elements of $S$ is contained in a compact subset of $M$. 
By  \Cref{Lem:RemainLoxodromic}, for any sufficiently long Dehn filling of $M$, the image of every element of $S$ remains loxodromic. 
Therefore, the image $\psi_\ss(gH)$ does not contain 
any parabolic elements, as required.

{Now, $\Gamma(\ss)$ satisfies the hypotheses of Case 1. By that case,} there exist a finite group $G$
and a homomorphism $\varphi\from \Gamma(\ss) \rightarrow G$ such that
$\varphi(\psi_\ss(gH))$ is disjoint from every conjugate of $\varphi(\psi_\ss(K))$. The composition $\varphi  \circ \psi_\ss \from \Gamma \rightarrow G$ then
satisfies the conclusion of the theorem. 
\end{proof}

\bibliographystyle{plain}
\bibliography{ConjDistinguished}

\end{document}